\documentclass[11pt,leqno]{article}
\usepackage{amsmath, amscd, amsthm, amssymb, graphics, xypic, mathrsfs, setspace, fancyhdr, times, bm, pdfsync, enumitem}
\usepackage[usenames, dvipsnames, svgnames, table]{xcolor}
\usepackage[colorlinks=true,pagebackref=true]{hyperref} 
\hypersetup{backref}


\setlength{\textwidth}{6.2in}       
\setlength{\textheight}{8.25in}
\setlength{\topmargin}{-0.125in}
\setlength{\oddsidemargin}{0.20in}
\setlength{\evensidemargin}{0.20in}

\newcommand{\trdeg}{\operatorname{tr.deg}}
\newcommand{\Spec}{\operatorname{Spec}}

\newcommand{\GW}{\mathrm{GW}}
\newcommand{\W}{\mathrm{W}}
\newcommand{\I}{\mathrm{I}}
\newcommand{\MW}{\mathrm{MW}}

\newcommand{\Z}{{\mathbb Z}}
\newcommand{\ZZ}{\Z}
\newcommand{\N}{{\mathbb N}}
\newcommand{\real}{{\mathbb R}}

\newcommand{\OO}{{\mathcal O}}
\newcommand{\A}{{\mathbb A}}
\newcommand{\PP}{{\mathbb P}}

\newcommand{\CHW}{{\widetilde{\mathrm{CH}}}}
\newcommand{\CH}{{\mathrm{CH}}}
\renewcommand{\deg}{\operatorname{deg}}

\newcommand{\KMW}{\mathrm{K}^{\MW}}
\newcommand{\KM}{\mathrm{K}^{\mathrm{M}}}
\renewcommand{\H}{{{\mathrm H}}}

\newtheorem{theorem}{Theorem}[section]
\newtheorem{lem}[theorem]{Lemma}
\newtheorem{cor}[theorem]{Corollary}
\newtheorem{prop}[theorem]{Proposition}

\theoremstyle{definition}
\newtheorem{defn}[theorem]{Definition}
\newtheorem{ex}[theorem]{Example}

\theoremstyle{remark}
\newtheorem{rem}[theorem]{Remark}

\numberwithin{equation}{section}

\begin{document}

\title{Lectures on Chow-Witt groups}

\author{Jean Fasel}


\date{}
\maketitle

\begin{abstract}
We provide a toolkit to work with Chow-Witt groups, and more generally with the homology and cohomology of the Rost-Schmid complex associated to Milnor-Witt $K$-theory. 
\end{abstract}

\tableofcontents

\section*{Introduction}

The purpose of these lectures (given in Essen in June 2018) is to give a quick introduction to Chow-Witt groups not involving prior knowledge in either Chow groups or the theory of symmetric bilinear forms. The goal is to stay as elementary as possible, avoiding altogether complicated proofs and focusing on explicit expressions allowing to perform computations. The lectures are based on the construction of the Rost-Schmid complex in \cite{Morel08} and on a primer on Milnor-Witt cohomology in preparation (jointly with Baptiste Calm\`es). The reader will eventually be able to refer to the latter for complete details, and in particular for exhaustive proofs of the results stated here.
The program of the lectures is roughly the following. In the first part, Milnor-Witt $K$-theory of a field $F$ is introduced, following the original presentation of \cite[\S 3]{Morel08}. We state a few elementary results aimed at understanding the nature of this $K$-theory, mixing up Milnor $K$-theory and symmetric bilinear forms. Then, we pass to the fundamental construction of the residue homomorphism associated to a discrete valuation field. The subtle point of the construction is that this homomorphism doesn't depend only on the chosen valuation, but also on the choice of a uniformizing parameter. This fact parallels the situation of Witt groups, and there is a well known procedure to transform it in a homomorphism independent of the choice of the parameter. This requires the introduction of Milnor-Witt $K$-theory \emph{twisted} by some extra information and leads to interesting but sometimes technically involved constructions. The major novelty here (with respect to \cite{Morel08} or \cite{Fasel08a}) is the notion of Milnor-Witt $K$-theory twisted by a \emph{graded line bundle} following the original idea of \cite{Deligne87}. Having the twisted residue homomorphisms at hand, we introduce in the second lecture the Rost-Schmid complexes, which are our main object of study. These complexes are defined for schemes essentially of finite type over a field $k$, but have some special properties when the scheme is essentially smooth. This leads to the notions of homological Rost-Schmid complex and cohomological Rost-Schmid complex, the latter being defined only for essentially smooth schemes. We then pass to the study of the functoriality properties of the complexes, the main result being that the homological complex is covariantly functorial for proper morphisms and contravariantly functorial for smooth morphisms, while the cohomological complex is contravariantly functorial for flat morphisms. In both cases, the (co-)homology of the complex is homotopy invariant allowing the definition of an Euler class associated to a vector bundle over a scheme $X$ essentially of finite type over $k$. This Euler class is one of the main motivations for the introduction of Chow-Witt groups by Barge and Morel, but we only spend a few lines on these classes stating only the definition. More properties of these classes can be found in Levine's lectures in the same volume. In the third lecture, we construct a pull-back for morphisms between smooth schemes, and more generally for l.c.i. morphisms between finite type schemes. This pull-back is defined at the level of the (co-)homology and uses homotopy invariance and the deformation to the normal cone construction. This is the only point in these notes where we loose track of what happens at the level of (co-)cycles, mainly because of our use of homotopy invariance. This pull-back allows to define a ring structure at the level of the cohomology of the Rost-Schmid complex. We further study functoriality in this section, stating results on cdh and Nisnevich descent before passing to the base change and projection formulas. At this point, it must be said that our constructions are all based on the theory of cycle modules as studied in \cite{Rost96}, consisting mostly of a refinement of these techniques as in \cite{Morel08}.  

To conclude this introduction, let us say a few words about the history of Chow-Witt groups. They were originally introduced by Barge-Morel (\cite{Barge00}) with the idea to define an Euler class able to tell when a rank $d$ vector bundle $E$ over a smooth affine scheme $X$ actually splits a free bundle of rank one. This program was later fulfilled by Morel in \cite{Morel08}.  The definition of the Chow-Witt groups was based on the one hand on the complex in Milnor $K$-theory and on the other hand on a Gersten-Witt complex defined in the PhD thesis of Schmid (\cite{Schmid98}). Unfortunately, the proof that this complex was indeed a complex was missing at that time. Part of my PhD thesis (\cite{Fasel08a}) was to use another complex to define the Chow-Witt groups, namely the Gersten-Witt complex of Balmer-Walter (\cite{Balmer02}). This complex involved the machinery of derived Witt groups as developed by Balmer, which allowed to give a definition of the Chow-Witt groups and obtain their basic functorial properties (all based on the corresponding functoriality of Witt groups). It was also the occasion to rename the \emph{Chow groups of oriented cycles} of Barge-Morel to Chow-Witt groups. Indeed, the reference to an orientation seemed confusing at that time, simply because Chow-Witt groups are not oriented in the classical sense. 

\subsection*{Acknowledgements} 

I wish to thank the editors, especially Federico Binda, for motivating me to write down these notes.
I'm grateful to Peng Du for carefully reading preliminary versions of this text. This led to the elimination of countless typos and contributed to make the exposition better.

\subsection*{Conventions} We work over a field $k$, assumed to be perfect. The schemes are all $k$-schemes which are of finite type and separated. The fields we consider are all finitely generated field extensions of $k$.

\section{Preliminaries}\label{sec:preliminaries}

\subsection{Milnor-Witt $K$-theory}

Let $F$ be a finitely generated field extension of $k$. Let $A(F)$ be the free (associative, unital) $\ZZ$-graded ring generated in degree $1$ by symbols $[a]$, for $a\in F^{\times}$, and $\eta$ in degree $-1$. The Milnor-Witt $K$-theory ring $\KMW_*(F)$ is the quotient of $A(F)$ generated by the following relations:
\begin{enumerate}
\item $[a]\cdot [1-a]=0$ for $a\neq 0,1$.
\item $[ab]=[a]+[b]+\eta\cdot [a]\cdot [b]$ for $a,b\in F^{\times}$.
\item $\eta\cdot [a]=[a]\cdot \eta$ for $a\in F^{\times}$.
\item $\eta\cdot (\eta\cdot [-1]+2)=0$.
\end{enumerate} 

In the sequel, we will simply denote by $[a_1,\ldots,a_n]$ the product of the $[a_i]$ (for $i=1,\ldots,n$) and by $\eta[a]$ the product $\eta\cdot [a]$.  By construction, the association
\[
F\mapsto \KMW_*(F)
\]
is functorial in $F$. Before stating elementary properties of Milnor-Witt $K$-theory, we first spend a few lines explaining its links with other objects in mathematics. First, note that the ideal generated by $\eta$ is two-sided and that
\[
\KMW_*(F)/(\eta)=\KM_*(F)
\]
where the right-hand side is the Milnor $K$-theory ring as defined in \cite{Milnor69}. The image of $[a_1,\ldots,a_n]$ is usually denoted by $\{a_1,\ldots,a_n\}$. Let now $\GW(F)$ be the Grothendieck-Witt ring of symmetric bilinear forms on $F$ (\cite{Milnor73}). For any $a\in F^{\times}$, we denote by $\langle a\rangle$ the class of the symmetric bilinear form on $F$ defined by $(X,Y)\mapsto aXY$. The following result is fundamental (\cite[Lemma 3.10]{Morel08}).

\begin{lem}\label{lem:degree0}
The map 
\[
\GW(F)\to \KMW_0(F)
\] 
given by $\langle a\rangle\mapsto 1+\eta[a]$ passes to a well-defined isomorphism.
\end{lem}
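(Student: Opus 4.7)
The plan is to prove the lemma by constructing an explicit two-sided inverse. I first check that the prescription $\langle a\rangle \mapsto 1 + \eta[a]$ defines a ring homomorphism $\psi\colon \GW(F) \to \KMW_0(F)$, and then I construct $\varphi\colon \KMW_0(F) \to \GW(F)$ guided by $\eta[a] \mapsto \langle a\rangle - 1$.

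For well-definedness of $\psi$, I would use the classical Witt-Matsumoto presentation of $\GW(F)$: generators $\langle a\rangle$, $a \in F^\times$, with relations $\langle 1\rangle = 1$, $\langle a\rangle \langle b\rangle = \langle ab\rangle$, and the Witt-Matsumoto relation $\langle u\rangle + \langle v\rangle = \langle u+v\rangle + \langle uv(u+v)\rangle$ whenever $u + v \neq 0$. The unit relation reduces to $\eta[1] = 0$, a short combinatorial consequence of (2) and (4). Multiplicativity is immediate from (2):
$$(1+\eta[a])(1+\eta[b]) = 1 + \eta([a]+[b]+\eta[a][b]) = 1 + \eta[ab].$$
For the Witt-Matsumoto relation itself, I would parametrize $u = cx$, $v = c(1-x)$ with $c = u+v$, expand both sides using (2) repeatedly, and invoke the Steinberg relation (1) in the form $[x][1-x] = 0$ to collapse the residual cross-terms.

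For the inverse, I would first observe that (2) rearranges to $\eta^2[a][b] = \eta[ab] - \eta[a] - \eta[b]$; iterating, every monomial $\eta^n[a_1]\cdots[a_n]$ in degree $0$ becomes a $\ZZ$-linear combination of $1$ and elements of the form $\eta[c]$. Thus $\KMW_0(F)$ is generated as an abelian group by $\{1\}\cup\{\eta[a] : a\in F^\times\}$. I would then define $\varphi(1) = \langle 1\rangle$ and $\varphi(\eta[a]) = \langle a\rangle - 1$, and extend $\ZZ$-linearly. Well-definedness of $\varphi$ amounts to translating each defining relation of $\KMW_*$ into an identity in $\GW(F)$: Steinberg (1), multiplied by $\eta^2$, yields $\eta[a]\eta[1-a] = 0$ in $\KMW_0(F)$, whose $\varphi$-image is precisely the Witt-Matsumoto identity; relations (2) and (3) are consistent with the multiplicativity of $\varphi$ on the displayed generators; and (4), once multiplied by a degree-$1$ element $[c]$ to enter degree $0$, reduces to $\eta[c]\eta[-1] + 2\eta[c] = 0$, whose $\varphi$-image $\langle c\rangle + \langle -c\rangle = \langle 1\rangle + \langle -1\rangle$ is the standard hyperbolic identity in $\GW(F)$. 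A direct check on generators then gives $\varphi\circ\psi = \mathrm{id}_{\GW(F)}$ and $\psi\circ\varphi = \mathrm{id}_{\KMW_0(F)}$.

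The main obstacle I anticipate is bookkeeping rather than conceptual depth: relations (1)--(4) generate a two-sided ideal in the free graded ring $A(F)$, and one must verify that its intersection with degree $0$ is cut out exactly by the Witt-Matsumoto and hyperbolic relations on $\GW(F)$, so that $\varphi$ descends. The delicate point is that (4) sits in degree $-1$ and only surfaces in $\KMW_0(F)$ after multiplication by a degree-$1$ generator; one must confirm that every such multiplication produces a relation already implied, via $\varphi$, by the presentation of $\GW(F)$.
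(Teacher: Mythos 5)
Your forward direction is fine: checking the diagonal presentation of $\GW(F)$ against relations (1)--(4) (with the caveat that the paper allows characteristic $2$, where one should use the presentation of Milnor--Husemoller cited in the proof of Lemma \ref{lem:forgetful} rather than the char $\neq 2$ Witt--Matsumoto one), and your reduction showing that $1$ and the $\eta[a]$ generate $\KMW_0(F)$ additively, give well-definedness of $\psi$ and its surjectivity. The genuine gap is the inverse $\varphi$. Defining $\varphi$ on the additive generators $1$, $\eta[a]$ and ``extending $\ZZ$-linearly'' presupposes a complete list of the additive relations among these generators inside $\KMW_0(F)$ --- and producing that list is essentially the content of the lemma. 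Translating the four defining relations into identities in $\GW(F)$, or multiplying relation (4) by a single degree-$1$ symbol, is not enough: the relations generate a two-sided ideal of the free graded ring $A(F)$, and its degree-$0$ part is spanned by \emph{all} products $x\cdot r\cdot y$ with $x,y$ homogeneous of complementary degrees (for instance $\eta^3[u][a][1-a]$, or relation (4) sandwiched between arbitrary monomials), so infinitely many configurations would have to be checked. You flag exactly this obstacle in your last paragraph, but you offer no mechanism to overcome it; as written, $\varphi$ is not known to be well defined, and hence injectivity of $\psi$ is not established.

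The standard way to close this --- and essentially what Morel does in the proof the paper defers to (\cite[Lemma 3.10]{Morel08}) --- is to avoid working in degree $0$ alone. One constructs a homomorphism of \emph{graded rings} out of all of $\KMW_*(F)$ (equivalently out of $A(F)$), for instance into the graded ring having $\W(F)$ in negative degrees and $\I^n(F)$ in degrees $n\geq 0$, by sending $[a]\mapsto \langle\langle a\rangle\rangle$ and $\eta$ to the canonical element $1\in\W(F)$ placed in degree $-1$. Then only the four generating relations need to be verified (they are exactly the identities checked in the paper's proof of Lemma \ref{lem:forgetful}), because a ring homomorphism killing the generators automatically kills the whole two-sided ideal they generate. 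In degree $0$ this produces a map $\KMW_0(F)\to \W(F)$; together with the rank map $\KMW_0(F)\to \KMW_0(F)/(\eta)=\KM_0(F)=\ZZ$, the composite $\GW(F)\to\KMW_0(F)\to\ZZ\times\W(F)$ is the canonical map $(\mathrm{rank},\,\text{projection})$, which is injective, and combined with the surjectivity you already proved this gives the lemma. Some such all-degrees construction (or another argument for injectivity) is what your write-up is missing.
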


In view of this result, we will denote by $\langle a\rangle$ the symbol $1+\eta[a]$ of degree $0$. The second relation defining Milnor-Witt $K$-theory then becomes
\[
[ab]=[a]+\langle a\rangle[b]=[b]+\langle b\rangle[a]
\]
for any $a,b\in F^{\times}$. In particular, the fact that $\langle b^2\rangle=\langle 1\rangle$ yields $[ab^2]=[a]+[b^2]$.

Observe also that the symbol $\eta\cdot [-1]+2$ appearing in the fourth relation becomes $\eta[-1]+2=\langle -1\rangle +1=\langle -1,1\rangle$. This is the so-called hyperbolic form on $F^2$. Following the usual conventions, we set $h:=\langle -1,1\rangle$ and we then see that the fourth relation becomes $\eta h=0$.

For $i\geq 1$, we can consider the composite homomorphism
\[
\GW(F)\to \KMW_0(F)\stackrel{\eta^i}\to \KMW_{-i}(F).
\]
Using Lemma \ref{lem:degree0} above, it is not hard to see that this is surjective. By definition, we have $\eta h=0$ and therefore we obtain a surjective homomorphism
\[
\W(F)=\GW(F)/(h)\to \KMW_{-i}(F)
\]
which is in fact an isomorphism (\cite[Lemma 3.10]{Morel08}). The left-hand side is the Witt ring and is the basic object of study when dealing with symmetric bilinear forms on $F$. We have a homomorphism
\[
r:\GW(F)\to \ZZ
\]
which associate to a symmetric bilinear form its rank. The kernel of this homomorphism is the fundamental ideal $\I(F)\subset \GW(F)$. Since $h$ is of rank $2$, we may also consider the homomorphism $\W(F)\to \ZZ/2$ induced by $r$ and observe that the kernel is also $\I(F)$. It is additively generated by the classes of $\langle -1,a\rangle$ in $\W(F)$ (or $\langle -1,a\rangle-h$ in $\GW(F)$). It follows that its powers $\I^n(F)\subset \W(F)$ (for $n\geq 1$) are generated by elements of the form 
\[
\langle\langle a_1,\ldots,a_n\rangle\rangle:=\langle -1,a_1\rangle \cdot \ldots\cdot \langle -1,a_n\rangle
\]
It is convenient to set $\I^n(F)=\W(F)$ for $n\leq 0$. We may also consider the quotient groups
\[
{\overline\I}^n(F):=\I^n(F)/\I^{n+1}(F)
\]
for any $n\in \ZZ$ (by definition, ${\overline\I}^n(F)=0$ for $n\leq -1$).

\begin{lem}\label{lem:forgetful}
For any $n\in\ZZ$, there is a unique (surjective) homomorphism of $\KMW_0(F)$-modules
\[
j_n:\KMW_n(F)\to \I^n(F)
\]
satisfying $[a_1,\ldots,a_n]\mapsto \langle\langle a_1,\ldots,a_n\rangle\rangle$.
\end{lem}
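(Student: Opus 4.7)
My plan is to construct the $j_n$ simultaneously as the graded pieces of a single graded ring homomorphism $j_* \colon \KMW_*(F) \to \bigoplus_{n \in \Z} \I^n(F)$. The target is a graded ring via the usual products $\I^n \cdot \I^m \subset \I^{n+m}$ in nonnegative degrees, extended to all of $\Z$ by the convention $\I^k(F) = \W(F)$ for $k \leq 0$; the required containments follow from the decreasing filtration $\I^a \supseteq \I^{a+1}$. Any such $j_*$ is automatically $\KMW_0(F)$-linear in each degree.

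To build $j_*$, I start with the free graded associative $\Z$-algebra $A(F)$ and define $\tilde{j} \colon A(F) \to \bigoplus_n \I^n(F)$ on generators by $\tilde{j}(\eta) = 1 \in \I^{-1}(F) = \W(F)$ and $\tilde{j}([a]) = \langle\langle a\rangle\rangle = \langle -1, a\rangle \in \I^1(F)$ for $a \in F^\times$. Freeness extends $\tilde{j}$ uniquely to a graded ring homomorphism. The main task is then to check that $\tilde{j}$ annihilates the four defining relations of $\KMW_*(F)$, after which $j_*$ descends.

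Relation (iii) is immediate from commutativity of $\W(F)$. Relation (iv) becomes $\langle -1, -1\rangle + 2 = 0$ in $\W(F)$, which follows from $\langle -1\rangle = -\langle 1\rangle$ (equivalently, $h = 0$). Relation (ii) unravels to the identity $\langle -1, ab\rangle = \langle -1, a\rangle + \langle -1, b\rangle + \langle -1, a\rangle\langle -1, b\rangle$ in $\W(F)$, which I verify by direct diagonalization using $\langle -x\rangle = -\langle x\rangle$. The main obstacle is relation (i), the Steinberg identity $\langle\langle a, 1-a\rangle\rangle = 0$ in $\W(F)$. My plan here is the classical argument: since $\langle a, 1-a\rangle$ represents $1$ and has discriminant $a(1-a)$, it is isometric to $\langle 1, a(1-a)\rangle$, hence $\langle -1, a\rangle \cdot \langle -1, 1-a\rangle = \langle 1, -a, -(1-a), a(1-a)\rangle = \langle 1, -1\rangle + \langle a(1-a), -a(1-a)\rangle$, and both 2-dimensional summands are Witt-trivial.

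Once $\tilde{j}$ descends to $j_*$, its degree-$n$ piece is the desired $j_n$. Surjectivity is clear: for $n \geq 1$, $\I^n(F)$ is additively generated by Pfister forms $\langle\langle a_1, \ldots, a_n\rangle\rangle$, which are images of the symbols; $j_0$ identifies with the canonical surjection $\GW(F) \twoheadrightarrow \W(F)$; and for $n \leq -1$ the image already contains $j_n(\eta^{-n}) = 1$, hence all of $\W(F)$ by $\KMW_0(F)$-linearity. For uniqueness in the range $n \geq 1$, any $\eta^k[a_1, \ldots, a_{n+k}] \in \KMW_n(F)$ factors as $(\eta^k [a_1, \ldots, a_k]) \cdot [a_{k+1}, \ldots, a_{n+k}]$ with the first factor in $\KMW_0(F)$, so the $n$-fold symbols generate $\KMW_n(F)$ as a $\KMW_0$-module and the prescribed values pin down $j_n$; the case $n \leq 0$ is then forced by compatibility with multiplication by $\eta$ in the assembled graded ring map.
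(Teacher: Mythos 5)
Your argument is correct, including in characteristic $2$ (your Steinberg computation only uses splitting off an anisotropic vector and the relations $\langle u,-u\rangle=\langle u\rangle h$, both of which are valid for symmetric bilinear forms in any characteristic), but it is packaged differently from the paper. The paper defines each $j_n$ separately using Morel's alternative presentation of the single group $\KMW_n(F)$ (generators $\eta^m[a_1,\ldots,a_r]$ with $r-m=n$ and three relations), so it must import that presentation as a black box and then verify three relations degree by degree; the required Witt-ring identities are established via Milnor's presentation of $\GW(F)$, in particular the chain relation $\langle u\rangle+\langle v\rangle=\langle u+v\rangle+\langle (u+v)uv\rangle$, which shows $\langle\langle a,1-a\rangle\rangle=h^2$. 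You instead assemble all the $j_n$ into one graded ring homomorphism out of the free algebra $A(F)$ into the graded ring $\bigoplus_n\I^n(F)$ (with $\I^n=\W$ for $n\le 0$), so you only have to kill the four original defining relations once; this buys you the $\KMW_0(F)$-linearity and the compatibility with multiplication by $\eta$ (the content of Corollary \ref{cor:commuteta}) for free, and it avoids the alternative presentation entirely, at the small cost of having to supply the uniqueness argument by hand (your observation that the pure symbols generate $\KMW_n(F)$ as a $\KMW_0(F)$-module for $n\ge 1$, which is exactly what is needed). The underlying Witt-ring verifications are the same identities in both proofs; only the derivation of the Steinberg relation differs cosmetically (representing $1$ and comparing discriminants versus Milnor's relation). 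One minor point: for $n\le 0$ the normalization on symbols is vacuous, so uniqueness there really does require the extra condition of compatibility with $\eta$ that you invoke; the paper glosses over this in the same way by declaring the definition ``obvious'' in that range.
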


\begin{proof}
If $n\leq 0$, the definition of $j_n$ is obvious. We then suppose that $n\geq 1$.
We may use an alternative presentation of $\KMW_n(F)$ (\cite[Definition 3.3, Lemma 2.4]{Morel08}). Its generators are of the form $\eta^m[a_1,\ldots,a_r]$ with $r-m=n$ and $r,m\in\N$, and the relations  are as follows:
\begin{enumerate}
\item $\eta^m[a_1,\ldots,a_r]=0$ if $a_i+a_{i+1}=1$ for some $i=1,\ldots, r-1$.
\item $\eta^m[a_1,\ldots,xy,\ldots,a_r]=\eta^m[a_1,\ldots,x,\ldots,a_r]+\eta^m[a_1,\ldots,y,\ldots,a_r]+ \\
\eta^{m+1}[a_1,\ldots,x,y,\ldots,a_r]$ for any $x,y\in F^{\times}$.
\item $\eta^{m+2}[a_1,\ldots,a_{i-1},-1,a_{i+1},\ldots,a_r]+2\eta^{m+1}[a_1,\ldots,a_{i-1},a_{i+1},\ldots,a_r]=0$.
\end{enumerate}

We set $j_n(\eta^m[a_1,\ldots,a_r])=\langle\langle a_1,\ldots,a_r\rangle\rangle\in \I^{r}(F)\subset \I^n(F)$, and we check that the three above relations are satisfied. If $F$ is of characteristic different from $2$, this is essentially \cite[Lemma 2.3]{Morel04}. In characteristic $2$, the proof is the same but we spell it out for the convenience of the reader. We may use the presentation of $\GW(F)$ given in \cite[Chapter IV, Lemma 1.1]{Milnor73}. One of the relation reads as
\[
\langle u\rangle +\langle v\rangle=\langle u+v\rangle+\langle(u+v)uv\rangle
\]
for any $u,v\in F^{\times}$ with $u+v\neq 0$. Applying to $a,1-a$, we obtain $\langle a\rangle +\langle 1-a\rangle=\langle 1\rangle+\langle a(1-a)\rangle$ showing that $\langle\langle a,1-a\rangle\rangle=h^2$ in $\GW(F)$. Consequently, the first relation holds in $\I^n(F)$. The third relation holds by definition, and we are left with the second one. It suffices to prove that 
\[
\langle\langle ab\rangle\rangle=\langle\langle a\rangle\rangle+\langle\langle b\rangle\rangle+\langle\langle a,b\rangle\rangle \in \I(F).
\]
Using $\langle u\rangle +\langle -u\rangle=h$ for any $u\in F^\times$, we get 
\[
\langle -1,a\rangle+\langle -1,b\rangle+\langle -1,a\rangle\cdot \langle -1,b\rangle=\langle -1,ab\rangle+3h
\]
whence $\langle\langle a\rangle\rangle+\langle\langle b\rangle\rangle+\langle\langle a,b\rangle\rangle=\langle\langle ab\rangle\rangle$. Finally, it is easy to check that $j_n$ is $\KMW_0(F)$-linear.
\end{proof}

The proof of the lemma also yields the following easy corollary that we state for further reference.

\begin{cor}\label{cor:commuteta}
For any $n\geq 1$, the diagram
\[
\xymatrix{\KMW_{n+1}(F)\ar[r]^-{j_{n+1}}\ar[d]_-\eta & \I^{n+1}(F)\ar[d] \\
\KMW_n(F)\ar[r]_-{j_n} & \I^n(F),}
\]
in which the right-hand vertical map is the inclusion, is commutative.
\end{cor}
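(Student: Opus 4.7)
The plan is to reduce to the alternative presentation of $\KMW_*(F)$ already invoked in the proof of Lemma \ref{lem:forgetful}. In that presentation the group $\KMW_{n+1}(F)$ is generated by the symbols $\eta^m[a_1,\ldots,a_r]$ with $r-m=n+1$, and $j_{n+1}$ sends such a generator to $\langle\langle a_1,\ldots,a_r\rangle\rangle\in \I^r(F)\subset \I^{n+1}(F)$. Multiplication by $\eta$ sends $\eta^m[a_1,\ldots,a_r]$ to $\eta^{m+1}[a_1,\ldots,a_r]$, which is a generator of $\KMW_n(F)$ since $r-(m+1)=n$. Therefore both compositions in the square send this generator to the element $\langle\langle a_1,\ldots,a_r\rangle\rangle$, the only difference being the ambient power $\I^{n+1}(F)$ versus $\I^n(F)$. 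The right-hand vertical arrow being precisely the inclusion $\I^{n+1}(F)\subset \I^n(F)$, the two images agree.

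Since both $j_{n+1}$ and $j_n$ are homomorphisms and the symbols $\eta^m[a_1,\ldots,a_r]$ generate $\KMW_{n+1}(F)$, checking commutativity on these generators suffices, and the verification above does it. This is essentially a tautology once the uniform formula $\eta^m[a_1,\ldots,a_r]\mapsto \langle\langle a_1,\ldots,a_r\rangle\rangle$ is in hand, because that formula makes no reference to the shift parameter $m$.

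The only step that is not completely automatic is making sure we are allowed to work with these generators in a well-defined way, i.e.\ that $j_n$ and $j_{n+1}$ are the unique homomorphisms produced by Lemma \ref{lem:forgetful}; but this is exactly what was proved there, so there is no remaining obstacle. Consequently the corollary is immediate from the construction of the maps $j_n$.
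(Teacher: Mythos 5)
Your proof is correct and is exactly the argument the paper intends: the corollary is stated as an immediate consequence of the proof of Lemma \ref{lem:forgetful}, where $j_n$ is defined on the generators $\eta^m[a_1,\ldots,a_r]$ by a formula independent of $m$, so that multiplying by $\eta$ merely shifts $m$ to $m+1$ and both composites land on $\langle\langle a_1,\ldots,a_r\rangle\rangle$. Nothing further is needed.
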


 If $F$ is of characteristic different from $2$, recall from \cite[Theorem 4.1]{Milnor69} that for any $n\in \ZZ$ there is a unique homomorphism 
\[
s_n:\KM_n(F)\to {\overline\I}^n(F)
\]
such that $s_n(\{a_1,\ldots,a_n\})=\langle\langle a_1,\ldots,a_n\rangle\rangle \pmod{\I^{n+1}(F)}$. In characteristic $2$, essentially the same proof as in Lemma \ref{lem:forgetful} shows that this homomorphism is well-defined and surjective. By definition, we obtain a commutative square (of $\KMW_0(F)$-modules)
\begin{equation}\label{eqn:fiberproduct}
\xymatrix{\KMW_n(F)\ar[r]^-{j_n}\ar[d] & \I^n(F)\ar[d] \\
\KM_n(F)\ar[r]_-{s_n} & {\overline\I}^n(F).}
\end{equation}

It turns out that this square is actually cartesian (at least in characteristic different from $2$), but this is a highly non trivial fact. To explain this, let us first observe that the homomorphism $j_n$ induces a surjective homomorphism
\[
\overline{j}_n:\KMW_n(F)/(h)\to  \I^n(F)
\]
by $\KMW_0(F)$-linearity.

\begin{theorem}
Suppose that $F$ is of characteristic different from $2$. Then, the diagram (\ref{eqn:fiberproduct}) is Cartesian. 
\end{theorem}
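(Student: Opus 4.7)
The goal is to prove that the canonical map
\[
\Phi_n : \KMW_n(F) \longrightarrow \KM_n(F) \times_{\overline{\I}^n(F)} \I^n(F),
\]
induced by the commutativity of (\ref{eqn:fiberproduct}), is bijective. The strategy separates into a routine surjectivity argument and a significantly deeper injectivity claim, which is essentially equivalent to the Milnor conjecture.

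For surjectivity, suppose $(\gamma,\beta)$ is a compatible pair. Using the surjection $\KMW_n(F)\twoheadrightarrow\KM_n(F)$ coming from the quotient by $(\eta)$, lift $\gamma$ to some $\tilde\alpha\in\KMW_n(F)$. The difference $j_n(\tilde\alpha)-\beta$ lies in $\I^{n+1}(F)$, because $j_n(\tilde\alpha)$ and $\beta$ share the same image in $\overline{\I}^n(F)$ by compatibility. By Lemma~\ref{lem:forgetful}, $j_{n+1}$ is surjective, so write $j_n(\tilde\alpha)-\beta=j_{n+1}(\delta)$ for some $\delta\in\KMW_{n+1}(F)$. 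Corollary~\ref{cor:commuteta} identifies $j_{n+1}(\delta)$ with $j_n(\eta\delta)\in\I^n(F)$, and $\eta\delta$ dies in $\KM_n(F)$; hence $\alpha:=\tilde\alpha-\eta\delta$ satisfies $\Phi_n(\alpha)=(\gamma,\beta)$.

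For injectivity, let $\alpha\in\ker\Phi_n$. Since $\alpha$ vanishes in $\KM_n(F)$, we may write $\alpha=\eta\beta$ for some $\beta\in\KMW_{n+1}(F)$. Applying Corollary~\ref{cor:commuteta} to the hypothesis $j_n(\alpha)=0$ then forces $j_{n+1}(\beta)=0$ in $\I^{n+1}(F)$, so $\beta\in\ker\overline{j}_{n+1}$. Thus it suffices to show that $\overline{j}_{n+1}$ is injective: if $\beta=h\epsilon$, the relation $\eta h=0$ gives $\alpha=\eta h\epsilon=0$.

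The main obstacle is therefore the injectivity of $\overline{j}_m:\KMW_m(F)/(h)\to\I^m(F)$ for every $m\ge 1$, which is the genuinely hard content of the theorem. This reduces, via a snake-lemma argument comparing the $\eta$-adic filtration of $\KMW_*(F)/(h)$ with the $\I$-adic filtration of $\I^*(F)$, to the Milnor conjecture: namely that each $s_m:\KM_m(F)/2\to\overline{\I}^m(F)$ is an isomorphism. In characteristic different from $2$ this is Voevodsky's theorem (with the multiplicative part of the statement going back to Orlov--Vishik--Voevodsky). Granting this input, an inductive diagram chase anchored at the trivial case $m\le 0$, where both sides collapse to $\W(F)$, yields injectivity of $\overline{j}_m$ at every level and closes the argument.
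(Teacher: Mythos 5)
Your reduction of the theorem to the injectivity of $\overline{j}_m$ is correct and is essentially the paper's route: the surjectivity chase is fine, and your injectivity step (write $\alpha=\eta\beta$, use Corollary \ref{cor:commuteta} together with the injectivity of the inclusion $\I^{n+1}(F)\subset \I^n(F)$ to get $j_{n+1}(\beta)=0$, then $\beta\in(h)$ and $\alpha=\eta h\epsilon=0$) is exactly what the paper's concluding ``the claim follows'' amounts to. The difference is that the paper handles the key point --- that $\overline{j}_m$ is an isomorphism for every $m$ --- by citing \cite[Theorem 2.4]{Morel04}, whereas you propose to re-derive it from the Milnor conjecture by a snake-lemma/induction argument, and that sketch does not work.

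Concretely, the inductive chase ``anchored at $m\le 0$'' cannot close. If $\overline{j}_{m+1}(x)=0$, injectivity of $s_{m+1}$ gives $x=\eta y$ with $y\in \KMW_{m+2}(F)/(h)$, and Corollary \ref{cor:commuteta} plus injectivity of $\I^{m+2}(F)\subset\I^{m+1}(F)$ gives $\overline{j}_{m+2}(y)=0$: the chase pushes the problem to \emph{higher} degrees and never reaches the anchor. All it proves is $\ker\overline{j}_m\subseteq \eta\cdot\ker\overline{j}_{m+1}\subseteq\bigcap_N \eta^N\bigl(\KMW_{m+N}(F)/(h)\bigr)$, i.e.\ kernel elements are infinitely $\eta$-divisible. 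To conclude they vanish you would need the $\eta$-adic filtration on $\KMW_*(F)/(h)$ to be separated in positive degrees, and this is not a consequence of your stated inputs: an abstract graded $\W(F)$-module surjecting onto $\I^*(F)$, isomorphic to it in degrees $\le 0$ and on all associated graded pieces, can perfectly well contain an $\eta$-divisible kernel concentrated in positive degrees. (Arason--Pfister gives $\bigcap_n\I^n(F)=0$, i.e.\ separatedness of the \emph{target} filtration, which does not help.) In other words, the implication ``$s_m$ iso for all $m$ $\Rightarrow$ $\overline{j}_m$ injective'' is precisely Morel's theorem; its proof needs genuinely more than a filtration comparison (essentially a presentation of $\I^n(F)$ by Pfister-form generators and relations, in the spirit of Arason--Elman, built on Orlov--Vishik--Voevodsky). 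Either invoke \cite[Theorem 2.4]{Morel04} as the paper does, or supply that argument; as written, your final paragraph is where the proof is missing.
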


\begin{proof}
We have a commutative diagram of exact sequences
\[
\xymatrix{\KMW_{n+1}(F)\ar[r]^-\eta\ar[d]_-{j_{n+1}}& \KMW_n(F)\ar[r]\ar[d]_-{j_n} & \KM_n(F)\ar[r]\ar[d]_-{s_n} & 0 \\
\I^{n+1}(F)\ar[r] & \I^n(F)\ar[r] & {\overline\I}^n(F)\ar[r] & 0.}
\]
On the other hand, we know from \cite[Theorem 2.4]{Morel04} that $\overline{j}_n$ is an isomorphism (this is a consequence of Voevodsky's affirmation of Milnor conjecture) for each $n\in\Z$. It follows that the map
\[
\KMW_{n+1}(F)/(h)\stackrel{\eta}\to \KMW_{n}(F)/(h)
\]
is injective. Consequently $(h)\cap (\eta)=(\eta h)=0$ and ${\overline\I}^n(F)=\KMW_n(F)/(\eta,h)$. The claim follows.
\end{proof}

\begin{rem}
I don't know if the statement is true if $F$ is of characteristic $2$ (see however \cite[Remark 2.12]{Morel08}).
\end{rem}

To conclude our elementary explorations of the properties of Milnor-Witt $K$-theory, let us observe that the multiplication by $h$ induces a homomorphism 
\[
h_n:\KM_n(F)\to \KMW_n(F)
\]
since $\eta h=0$. This homomorphism can also be described by 
\[
h_n(\{a_1,\ldots,a_n\})=[a_1^2,a_2,\ldots,a_n].
\] 

We conclude this section with some basic properties of Milnor-Witt $K$-theory that we'll use in the sequel (\cite[Lemma 3.7, Corollary 3.8, Lemma 3.14]{Morel08}).

\begin{lem}\label{lem:elementary}
Let $\epsilon:=-\langle -1\rangle\in \KMW_0(F)$. For any $n\in \Z$, let 
\[
n_\epsilon=\begin{cases}  \sum_{i=1}^n \langle (-1)^{i-1}\rangle & \text{if $n>0$.} \\
0 & \text{ if $n=0$.} \\
\epsilon \sum_{i=1}^{-n} \langle (-1)^{i-1}\rangle & \text{ if $n<0$.}\end{cases}
\] 
Then, the following properties are satisfied. 
\begin{enumerate}
\item For any $a\in F^\times$, we have $[a,a]=[-1,a]=[a,-1]$ and $[a,-a]=0$.
\item For any $\alpha\in \KMW_n(F)$ and $\beta\in \KMW_m(F)$, we have $\alpha\beta=\epsilon^{mn}\beta\alpha$.
\item For any $n\in\ZZ$ and $a\in F^\times$, we have $[a^n]=n_{\epsilon}[a]$.
\end{enumerate}
\end{lem}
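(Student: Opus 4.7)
My plan is to treat the three parts in the stated order, since (2) will use (1) and (3) will rely on both. \emph{Part (1).} I first establish $[a,-a]=0$ by adapting Milnor's classical trick to the twisted multiplicativity of relation~(2). For $a=1$, $[1]=0$ follows from relation~(2) with $a=b=1$. For $a\neq 0,1$, I use the factorization $-a=(1-a)(1-a^{-1})^{-1}$: expanding $[-a]$ via relation~(2) and multiplying on the left by $[a]$, the Steinberg relation $[a,1-a]=0$ kills one term, while the other (of the form $[a,1-a^{-1}]$) is reduced to Steinberg for $a^{-1}$ after writing $[a]=-\langle a\rangle[a^{-1}]$, itself a consequence of $[a\cdot a^{-1}]=[1]=0$. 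Given $[a,-a]=0$, the identities $[a,a]=[a,-1]=[-1,a]$ then follow by expanding $[-a]$ via relation~(2) in the two possible forms, $[a]+\langle a\rangle[-1]$ and $[-1]+\langle -1\rangle[a]$, multiplying each on the left by $[a]$, equating to zero, and simplifying using the centrality of degree-$0$ symbols (relation~(3) together with $\langle u\rangle=1+\eta[u]$) and the $\GW(F)$-identities $\langle u\rangle^2=\langle 1\rangle$, $\langle u\rangle\langle -u\rangle=\langle -1\rangle$.

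\emph{Part (2).} Bilinearity reduces the claim to the base case $[a][b]=\epsilon[b][a]$ for $a,b\in F^\times$. Equating the two forms of $[ab]=[ba]$ via relation~(2) alone only yields the weaker $\eta[a][b]=\eta[b][a]$. To sharpen this, I apply part~(1) to $c:=ab^{-1}$: from $[c,-c]=0$, I expand $[c]=[a]-\langle ab^{-1}\rangle[b]$ and $[-c]=[-a]-\langle -ab^{-1}\rangle[b]$ (both obtained from relation~(2) applied to $a=c\cdot b$ and $-a=(-c)\cdot b$). The terms $[a,-a]$ and $\langle\cdot\rangle[b,b]$ are controlled by part~(1), and a careful collection of the $\langle\cdot\rangle$-coefficients using $\langle ab^{-1}\rangle\langle -ab^{-1}\rangle=\langle -1\rangle$, combined with the symmetric expansion obtained by using the other form of $[-a]$ in the subfactor $[b,-a]$, rearranges to $[a][b]+\langle -1\rangle[b][a]=0$, i.e.\ $[a][b]=\epsilon[b][a]$. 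The general statement $\alpha\beta=\epsilon^{mn}\beta\alpha$ then follows by induction on the lengths of $\alpha,\beta$ as words in the generators $[\cdot]$ and $\eta$, with $\eta$ contributing no sign since it is central.

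\emph{Part (3).} I induct on $n\in\Z$. The cases $n=0,1$ are immediate from $[1]=0$ and $1_\epsilon=\langle 1\rangle$. For $n\ge 1$: $[a^{n+1}]=[a]+\langle a\rangle[a^n]$ via relation~(2), and by the inductive hypothesis $\langle a\rangle[a^n]=n_\epsilon\langle a\rangle[a]$ (using the centrality of degree-$0$ elements). The key ingredient is $\langle a\rangle[a]=\langle -1\rangle[a]$, extracted from part~(1) since $[a,a]=[-1,a]$ gives $\eta[a][a]=\eta[-1][a]$, equivalently $\langle a\rangle[a]=\langle -1\rangle[a]$. Combined with the telescoping identity $n_\epsilon\langle -1\rangle=(n+1)_\epsilon-\langle 1\rangle$ in $\GW(F)$, this closes the induction. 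For $n=-1$: $[a^{-1}]=-\langle a\rangle[a]=-\langle -1\rangle[a]=\epsilon[a]=(-1)_\epsilon[a]$ from $[a\cdot a^{-1}]=[1]=0$; the case $n<-1$ follows by inducting downward via $[a^{n-1}]=[a^{-1}]+\langle a^{-1}\rangle[a^n]$ and the same telescoping identity.

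The main obstacle is the passage from the easy $\eta$-weak commutativity to the full $\epsilon$-twisted commutativity in part~(2): relation~(2) alone only delivers the statement modulo $\eta$, so a separate input is needed. The right one is to apply part~(1) to the ratio $c=ab^{-1}$ rather than to $a$ and $b$ individually, which is what forces the cross-terms $[a,b]$ and $[b,a]$ to appear together with the correct $\langle -1\rangle$-twist once the diagonal terms are swept away using part~(1).
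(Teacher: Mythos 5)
Your argument is essentially the standard one: the paper gives no proof of this lemma (it simply cites Morel's Lemma 3.7, Corollary 3.8 and Lemma 3.14), and what you propose reconstructs those proofs' main devices -- Milnor's factorization $-a=(1-a)(1-a^{-1})^{-1}$ for $[a,-a]=0$, deducing $\epsilon$-commutativity from part (1) applied to a ratio of $a$ and $b$, and the telescoping induction for $[a^n]=n_\epsilon[a]$. I checked parts (2) and (3) in detail and they work as described: expanding $[c,-c]=0$ with $c=ab^{-1}$, using $[b,-a]=[b,a]+\langle a\rangle[b,-1]$ together with $[b,b]=[b,-1]$ and $\langle b\rangle[b]=\langle -1\rangle[b]$, does collapse to $[a][b]+\langle -1\rangle[b][a]=0$, and the identity $n_\epsilon\langle -1\rangle=(n+1)_\epsilon-\langle 1\rangle$ closes both inductions in (3).

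Two justifications in part (1) are, however, insufficient as written. First, $[1]=0$ does \emph{not} follow from relation (2) with $a=b=1$: that instance only gives $[1]+\eta[1][1]=\langle 1\rangle[1]=0$, and you cannot yet divide by $\langle 1\rangle=1+\eta[1]$. You need more input, e.g.\ $[a\cdot 1]=[a]$ gives $\langle a\rangle[1]=0$ for all $a$, hence $\eta[-1][1]=-[1]$, and then right-multiplying relation (4) by $[1]$ yields $\eta[1]=0$, whence $[1]=\langle 1\rangle[1]=0$. Second, expanding $[-a]$ in its two forms and left-multiplying by $[a]$ only produces $[a,a]=-\langle a\rangle[a,-1]$ and $[a,a]=-\langle -1\rangle[a,-1]=\epsilon[a,-1]$, and part (1) applied to $-a$ gives $[-1,a]=\epsilon[a,a]$; to strip the $\epsilon$'s you still need $(1+\langle -1\rangle)[a,-1]=0$, i.e.\ $\langle -1\rangle[-1]=-[-1]$, which your listed degree-zero identities $\langle u\rangle^2=\langle 1\rangle$, $\langle u\rangle\langle -u\rangle=\langle -1\rangle$ cannot supply -- it is the degree-one identity $(1+\langle -1\rangle)[-1]=[(-1)^2]=[1]=0$, one more use of relation (2). (Morel's own derivation avoids this by expanding the \emph{second} factor, $[a]=[(-1)(-a)]=[-1]+\langle -1\rangle[-a]$, so that $[a][-a]=0$ kills the error term outright.) Finally, in part (2) the centrality of $\eta$ alone gives the sign $\epsilon^{rs}$ with $r,s$ the numbers of symbol factors, not $\epsilon^{mn}$ in the degrees; reconciling the two exponents uses $\epsilon\eta=\eta$, again a consequence of relation (4). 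All three points are easily repaired, so the architecture of your proof stands.
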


\subsection{Residue homomorphisms}

Let us now describe the fundamental tool used to define a Gersten-type complex with coefficients in Milnor-Witt $K$-theory: the residue homomorphism. We follow again closely \cite[\S 3]{Morel08}. Let then $F$ be a field and $v:F\to \Z\cup \{-\infty\}$ be a discrete valuation with residue field $\kappa(v)$ and valuation ring $\OO_v$. We choose a uniformizing parameter $\pi=\pi_v$ of $v$. 

\begin{theorem}\label{thm:residue}
There is a unique homomorphism of graded abelian groups
\[
\partial_v^{\pi}:\KMW_{*}(F)\to \KMW_{*-1}(\kappa(v))
\]
of degree $-1$ such that $\partial_v^{\pi}$ commutes with the multiplication by $\eta$ and 
\begin{enumerate}
\item $\partial_v^{\pi}([\pi,u_1,\ldots,u_n])=[\overline u_1,\ldots,\overline u_n],$
\item $\partial_v^{\pi}([u_1,\ldots,u_n])=0,$
\end{enumerate}
for any $u_1,\ldots,u_n\in \OO_v^\times$. We call it the \emph{residue homomorphism}.
\end{theorem}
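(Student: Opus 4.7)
The plan is to establish uniqueness by reducing to generators, then produce the map by the standard device of an auxiliary target ring in which the four defining relations can be checked.

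\textbf{Uniqueness.} Every $a\in F^\times$ factors as $u\pi^n$ with $u\in \OO_v^\times$ and $n\in\Z$. The expanded second relation $[u\pi^n]=[u]+\langle u\rangle[\pi^n]$ together with Lemma \ref{lem:elementary}(3), which gives $[\pi^n]=n_\epsilon[\pi]$, rewrites any $[a]$ as a $\KMW_0(F)$-linear combination of unit symbols and $[\pi]$. Iterating this in each slot of a product $[a_1,\ldots,a_m]$, and pulling $\eta$'s out freely by condition (3) of the definition, one expresses every generator of $\KMW_*(F)$ in a form on which additivity of $\partial_v^\pi$, its commutation with $\eta$, and the two prescribed values force a unique answer. (Note that $n=0$ in condition (2) gives $\partial_v^\pi(1)=0$, pinning down the degree-$0$ piece as well.)

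\textbf{Existence.} Following \cite[\S 3]{Morel08}, one builds $\partial_v^\pi$ by first defining it on the free graded ring $A(F)$ that presents $\KMW_*(F)$ and then checking it factors through the four defining relations. To organize the verification, enlarge $\KMW_*(\kappa(v))$ to a graded ring
\[
\tilde R=\KMW_*(\kappa(v))\oplus \xi\cdot\KMW_{*-1}(\kappa(v)),
\]
where $\xi$ is a formal degree-one generator commuting with $\eta$ and satisfying a quadratic relation ($\xi^2 = \xi[-1]$ up to $\eta$-corrections) that encodes the behavior of $[\pi,\pi]=[\pi,-1]$ from Lemma \ref{lem:elementary}(1). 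Define a graded ring map $\phi:A(F)\to \tilde R$ by $\eta\mapsto\eta$ and $[u\pi^n]\mapsto [\bar u]+\langle \bar u\rangle\, n_\epsilon\,\xi$. Check that $\phi$ annihilates each of the four Milnor-Witt relations, so that it descends to $\KMW_*(F)\to \tilde R$, and take $\partial_v^\pi$ to be the composition with the projection onto the $\xi$-summand (identified with $\KMW_{*-1}(\kappa(v))$ via $\xi\cdot x\mapsto x$). The conditions of the theorem are then built into the definition of $\phi$.

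\textbf{Main obstacle.} The Steinberg relation $[a][1-a]=0$ is the only one that requires real work. It splits into cases according to $v(a)$: if $v(a)>0$ then $\overline{1-a}=1$ and the unit part of the image vanishes, if $v(a)<0$ one uses $1-a=-a(1-a^{-1})$ together with Lemma \ref{lem:elementary}(1) to reduce to the previous case, and if $v(a)=0$ one further splits according to whether $\bar a=1$ or not, the latter descending from the Steinberg relation already established in $\KMW_*(\kappa(v))$. In each case, the $\eta$-corrections produced by the twisted multiplicativity in $\tilde R$ must cancel against the quadratic relation imposed on $\xi$; orchestrating this cancellation is the technical heart of the proof and dictates the precise choice of relations in $\tilde R$. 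The remaining relations (twisted multiplicativity, $\eta$-centrality, and $\eta(\eta[-1]+2)=0$) follow by direct computation from the construction of $\tilde R$.
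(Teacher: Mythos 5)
Your proposal is correct and takes essentially the same route as the paper: your uniqueness argument is the one sketched in the remark following Theorem \ref{thm:residue}, and your existence construction via the auxiliary graded ring $\KMW_*(\kappa(v))\oplus\xi\cdot\KMW_{*-1}(\kappa(v))$ (where in fact the exact relation $\xi^2=\xi[-1]$ suffices, with no $\eta$-correction), the assignment $[u\pi^n]\mapsto [\overline u]+\langle\overline u\rangle n_\epsilon\xi$, and the case-by-case verification of the Steinberg relation is precisely the proof of \cite[Theorem 3.15]{Morel08}, to which the paper defers.
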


\begin{proof}
See \cite[Theorem 3.15]{Morel08}.
\end{proof}

\begin{rem}
Granted the existence of $\partial_v^{\pi}$, the unicity is easy. Indeed, let $[a_1,\ldots,a_n]$ be a symbol. Writing $a_i=u_i\pi^{n_i}$ with $u_i\in \OO_v^\times$ and $n_i\in\Z$ for each $i$ and using $[\pi^{n_i}]=(n_i)_{\epsilon}[\pi]$, $[\pi,\pi]=[\pi,-1]$ and the graded commutativity of Milnor-Witt $K$-theory, we can express $[a_1,\ldots,a_n]$ as a sum of symbols of the form $\eta^m[\pi,u_1,\ldots,u_{n+m-1}]$ and $\eta^m[u_1,\ldots,u_{n+m}]$ for some (non constant) $m\in \N$. The images of these symbols are determined by properties 1. and 2. above, yielding unicity.
\end{rem}

\begin{rem}\label{rem:dependsonchoice}
The homomorphism $\partial_v^\pi$ depends not only on the valuation $v$, but also on the choice of the uniformizing parameter $\pi$. Indeed, suppose that $\pi^\prime=u\pi$ for some $u\in \OO_v^\times$. We then have $\partial_v^{\pi}([\pi,-1])=[-1]$ by construction, while
\begin{eqnarray*}
\partial_v^{\pi^\prime}([\pi,-1]) & = & \partial_v^{\pi^\prime}([u^{-1}\pi^\prime,-1]) \\
& = & \partial_v^{\pi^\prime}([u^{-1},-1]+[\pi^\prime,-1]+\eta[u^{-1},\pi^\prime,-1]) \\
& = & \partial_v^{\pi^\prime}([u^{-1},-1]+[\pi^\prime,-1]+\eta[\pi^\prime,u^{-1},-1]) \\
& = & [-1]+\eta[u^{-1},-1] \\
& = & \langle u^{-1}\rangle[-1].
\end{eqnarray*}
In general, $[-1]\neq \langle u^{-1}\rangle[-1]$. For instance, taking $\kappa(v)=\mathbb{R}$ and using the homomorphism $\KMW_1(\real)\to \I(\real)$ we see that $[-1]\neq \langle -1\rangle[-1]$.
\end{rem}

The fact that $\partial_v^\pi$ depends on the choice of the uniformizing parameter is the reason for introducing twisted Milnor-Witt $K$-theory in Section \ref{sec:twistedMW} and the motivation for considering graded line bundles in the next section. To conclude the present section, we state a few fundamental results on the residue homomorphism (\cite[Proposition 3.17, Theorem 3.24]{Morel08}) and introduce the geometric transfer.

\begin{lem}
Let $\alpha\in \KMW_*(F)$ and let $u\in \OO_v^\times$. Then 
\[
\partial_v^{\pi}(\langle u\rangle\alpha)=\langle \overline u\rangle \partial_v^{\pi}(\alpha).
\]
\end{lem}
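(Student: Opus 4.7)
My plan is to first rewrite the identity in terms of $\eta[u]$. Writing $\langle u\rangle = 1+\eta[u]$ and $\langle \overline u\rangle = 1+\eta[\overline u]$, and using that $\partial_v^\pi$ is $\ZZ$-linear and commutes with multiplication by $\eta$, the desired equality reduces to
\[
\partial_v^\pi(\eta[u]\alpha) = \eta[\overline u]\,\partial_v^\pi(\alpha).
\]
I would then invoke the remark following Theorem \ref{thm:residue}, which says that every element of $\KMW_*(F)$ can be written as a sum of symbols of the form $\eta^m[\pi,u_1,\ldots,u_{r-1}]$ or $\eta^m[u_1,\ldots,u_r]$ with $u_i\in \OO_v^\times$. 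By additivity of $\partial_v^\pi$, it suffices to verify the reduced identity on these two families of generators.

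The key algebraic input is the identity $\eta[u,\pi]=\eta[\pi,u]$ inside $\KMW_*(F)$. This drops out of the commutativity $u\pi=\pi u$ together with relation (2) in the definition of Milnor-Witt $K$-theory: expanding $[u\pi]=[u]+[\pi]+\eta[u,\pi]$ and $[\pi u]=[\pi]+[u]+\eta[\pi,u]$ and equating them, the $[u]+[\pi]$ contributions cancel. With this in hand, for $\alpha=\eta^m[\pi,u_1,\ldots,u_{r-1}]$ one rewrites
\[
\eta[u]\alpha = \eta^{m+1}[u,\pi,u_1,\ldots,u_{r-1}] = \eta^{m+1}[\pi,u,u_1,\ldots,u_{r-1}],
\]
and then the defining formulas of $\partial_v^\pi$ together with $\eta$-linearity give $\partial_v^\pi(\eta[u]\alpha)=\eta^{m+1}[\overline u,\overline u_1,\ldots,\overline u_{r-1}]=\eta[\overline u]\,\partial_v^\pi(\alpha)$. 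The second family, $\alpha=\eta^m[u_1,\ldots,u_r]$, is immediate because both sides vanish: $\partial_v^\pi(\alpha)=0$ by definition, and $\eta[u]\alpha=\eta^{m+1}[u,u_1,\ldots,u_r]$ is again $\eta$ to some power times a symbol of units.

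The main obstacle, as I see it, is to avoid a circular appeal to graded commutativity. Lemma \ref{lem:elementary}(2) would give $[u][\pi]=\epsilon[\pi,u]=-\langle -1\rangle[\pi,u]$, and using this sign-bearing commutativity to swap $[u]$ past $[\pi]$ \emph{without} the factor of $\eta$ would render the argument circular: the specialization $u=-1$ of the very lemma under proof is precisely the statement that $\partial_v^\pi$ interacts correctly with multiplication by $\langle -1\rangle=-\epsilon$. The $\eta$-stable commutativity $\eta[u,\pi]=\eta[\pi,u]$ kills this sign, which is exactly why reducing from $\langle u\rangle\alpha$ to $\eta[u]\alpha$ at the outset—rather than trying to handle $\langle u\rangle\alpha$ by a direct symbol rearrangement—is the right framing.
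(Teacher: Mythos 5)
Your proof is correct. The paper states this lemma without proof (citing Morel's Proposition 3.17), and your argument — reducing $\langle u\rangle$ to $1+\eta[u]$, decomposing $\alpha$ into the two families of generators from the unicity remark, and using the $\eta$-stable commutativity $\eta[u,\pi]=\eta[\pi,u]$ obtained directly from relation (2) — is essentially the standard argument given there. Your observation that the sign-free identity $\eta[u,\pi]=\eta[\pi,u]$ is what lets you avoid invoking graded commutativity (and hence a premature appeal to the case $u=-1$ of the lemma itself) is exactly the right point to flag.
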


\begin{theorem}\label{thm:fundamental}
For any $n\in \Z$, there is a split short exact sequence 
\[
0\to \KMW_n(F)\to \KMW_n(F(t))\xrightarrow{\sum\partial_p^p} \bigoplus_{p} \KMW_{n-1}(F(p))\to 0  
\]
where $p$ runs through the set of monic irreducible polynomials of $F[t]$ and $F(p)=F[t]/(p)$. The left-hand homomorphism is induced by the field extension $F\subset F(t)$ and the right-hand homomorphism is the sum of the residues homomorphisms associated to the $p$-adic valuation and the uniformizing parameter $p$.
\end{theorem}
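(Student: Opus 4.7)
The plan is to follow Milnor's original argument for the analogous exact sequence in Milnor $K$-theory (\cite{Milnor69}), adapting it to the Milnor-Witt setting by tracking powers of $\eta$. Introduce the ``degree'' filtration $L_0 \subseteq L_1 \subseteq \cdots \subseteq \KMW_n(F(t))$ by letting $L_d$ be the subgroup generated by symbols $\eta^m[a_1,\ldots,a_{n+m}]$ (in the alternative presentation recalled in the proof of Lemma \ref{lem:forgetful}) with each $a_i \in F[t]$ of degree at most $d$. The identity $[u/v] = [u] - \langle u/v\rangle[v]$, a direct consequence of relation (2), together with Lemma \ref{lem:elementary}(2), shows that this filtration is exhaustive and that $L_0 = \KMW_n(F)$. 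This identifies the left-hand map of the statement with the inclusion $L_0 \hookrightarrow \bigcup_d L_d$.

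Next, analyze how the residues interact with the filtration. For a monic irreducible $p \in F[t]$ of degree $d$, any polynomial of degree strictly less than $d$ is coprime to $p$ and hence a unit in $\OO_{v_p}$; clause (2) of Theorem \ref{thm:residue}, combined with the commutation $\partial_{v_p}^p(\eta\,\alpha) = \eta\,\partial_{v_p}^p(\alpha)$, then forces $\partial_{v_p}^p(L_{d-1}) = 0$ whenever $\deg p \geq d$. Consequently the total residue map descends to a homomorphism
\[
L_d/L_{d-1} \;\longrightarrow\; \bigoplus_{\deg p = d} \KMW_{n-1}(F(p))
\]
for each $d \geq 1$. Construct an inverse by sending a generator $\eta^m[\bar b_1,\ldots,\bar b_{n+m-1}]$ at $F(p)$ to $\eta^m[p, b_1,\ldots, b_{n+m-1}] \in L_d$, where each $b_i \in F[t]$ is the unique lift of $\bar b_i$ of degree less than $d$. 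Clause (1) of Theorem \ref{thm:residue} shows that composing with $\partial_{v_p}^p$ recovers the identity, and iterating over $d \geq 1$ yields exactness of the right-hand portion of the sequence.

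For the splitting, use the uniformizer $1/t$ at the infinite place to construct a specialization $s_\infty \colon \KMW_n(F(t)) \to \KMW_n(F)$, following the same recipe as in Milnor $K$-theory, and observe that it restricts to the identity on $L_0 = \KMW_n(F)$. The main obstacle is the well-definedness of the proposed inverse to the residue map on the graded piece: in Milnor-Witt $K$-theory one must contend simultaneously with arbitrary powers of $\eta$ and with the $\epsilon$-commutativity of Lemma \ref{lem:elementary}(2), which introduces coefficients $\langle u\rangle$ whenever factors are reordered. The point is that all such correction terms involve symbols whose polynomial entries have strictly smaller degree than $d$, and hence lie in $L_{d-1}$; this is verified by a careful inductive application of relations (2) and (3) together with Lemma \ref{lem:elementary}(1), exactly as in the classical Milnor case, just with the $\eta$-terms carried along.
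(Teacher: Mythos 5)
Your overall strategy is the right one: the paper itself gives no argument for this theorem (it simply cites \cite[Theorem 3.24]{Morel08}), and Morel's proof is exactly the Milnor degree-filtration argument you outline, with the $\eta$-terms carried along; your retraction at the place at infinity is also fine and matches the retraction $\alpha\mapsto\partial^t_t([t]\alpha)$ mentioned in the paper's remark. However, as written your argument does not establish exactness in the middle, and this is a genuine gap rather than a routine omission.

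Concretely: you construct $\theta_p\colon \KMW_{n-1}(F(p))\to L_d/L_{d-1}$, $\eta^m[\bar b_1,\ldots,\bar b_{n+m-1}]\mapsto \eta^m[p,b_1,\ldots,b_{n+m-1}]$, and verify $\partial_p^p\circ\theta_p=\mathrm{id}$. That only makes the graded residue $L_d/L_{d-1}\to\bigoplus_{\deg p=d}\KMW_{n-1}(F(p))$ a \emph{split surjection}; together with your $s_\infty$ it yields surjectivity of $\sum\partial_p^p$ and the splitting, but exactness in the middle requires the graded residue to be \emph{injective}, i.e.\ that $\ker(\sum\partial_p^p)\cap L_d\subseteq L_{d-1}$ for $d\geq 1$. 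For that you need the other half of the isomorphism: that $\sum_{\deg p=d}\theta_p$ is surjective, equivalently that $L_d$ is generated by $L_{d-1}$ together with symbols $\eta^m[p,b_1,\ldots,b_{n+m-1}]$ with $p$ monic irreducible of degree exactly $d$ and $\deg b_i<d$. This generation lemma (the analogue of Milnor's Lemma 2.1, and a substantial step in Morel's proof) is nowhere addressed in your proposal, and in the Milnor--Witt setting it is not free: relation (2) is additive only up to $\eta$-cross terms, reordering entries costs $\epsilon$- and $\langle u\rangle$-factors, and one must reduce symbols with non-monic or reducible entries, or with two entries of degree $d$, to the special form modulo $L_{d-1}$. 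Without it, ``iterating over $d$'' does not show that an element with all residues zero lies in $L_0$. Two further points: the well-definedness of $\theta_p$, which you do flag, is where most of Morel's actual work lies — your claim that all correction terms stay in $L_{d-1}$ is correct in substance but is the content of several computational lemmas, not a remark; and you should also justify that the total residue lands in the direct sum (finitely many nonzero components), which does not follow from the filtration alone since there are infinitely many $p$ of each degree, but from the fact that a given symbol has entries divisible by only finitely many irreducibles.
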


\begin{rem}
There is an obvious retraction of $\KMW_n(F)\to \KMW_n(F(t))$. Indeed, for any $\alpha\in \KMW_n(F(t))$, we can consider $[t]\alpha\in \KMW_{n+1}(F(t))$ and then use the residue homomorphism $\partial_t^t$.
\end{rem}

There is yet another valuation on $F(t)$ that we haven't used so far: the valuation at $\infty$. For any polynomials $f,g\in (F[t]\setminus 0)^2$, we set $v_{\infty}(f/g)=\deg(g)-\deg(f)$. We can choose $-\frac 1t$ as a uniformizing parameter (we'll come back later on the reasons of this choice) and we obtain a residue homomorphism
\[
\partial_{\infty}^{-\frac 1t}:\KMW_{n}(F(t))\to \KMW_{n-1}(F)
\]
for any $n\in\Z$. Following \cite[\S 4.2]{Morel08}, we set the following definition.

\begin{defn}\label{defn:geometrictransfers}
Let $p$ be a monic irreducible polynomial in $F[t]$, the composite
\[
\tau^{F(p)}_F:\KMW_{n-1}(F(p))\subset \bigoplus_{p} \KMW_{n-1}(F(p))\stackrel{s}\to \KMW_n(F(t))\xrightarrow{-\partial_{\infty}^{-\frac 1t}} \KMW_{n-1}(F),
\]
where $s$ is any section of $\sum\partial_p^p$, is called the \emph{geometric transfer} (in weight $n-1$) associated to the extension $F\subset F(p)$.
\end{defn}

\begin{rem}
As the composite $\KMW_n(F)\to \KMW_n(F(t))\xrightarrow{-\partial_{\infty}^{-\frac 1t}} \KMW_{n-1}(F)$ is trivial, the geometric transfer is independent of the choice of the section $s$.
\end{rem}

As a consequence, we obtain a useful unicity property, that is a consequence of the definition of a cokernel. 

\begin{lem}\label{lem:uniquegeom}
The geometric transfers $\tau^{F(p)}_F$ are the unique homomorphisms which make the diagram
\[
\xymatrix{0\ar[r] & \KMW_n(F)\ar[r] & \KMW_n(F(t))\ar[r]^-{\sum\partial_p^p}\ar[d]_-{-\partial_{\infty}^{-\frac 1t}} & \bigoplus_p \KMW_{n-1}(F(p))\ar[r]\ar[ld]^-{\sum \tau_F^{F(p)}} & 0   \\
&  & \KMW_{n-1}(F) & & }
\]
commutative. In other words, there are the unique homomorphisms $f_p:\KMW_{n-1}(F(p))\to \KMW_{n-1}(F)$ with $\sum ( f_p\circ \partial_p^p)+\partial_{\infty}^{-\frac 1t}=0$.
\end{lem}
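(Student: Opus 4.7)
The plan is to recognize this lemma as a direct application of the universal property of a cokernel to the split short exact sequence of Theorem \ref{thm:fundamental}. First I would bundle the $f_p$ into a single homomorphism
\[
T:\bigoplus_p \KMW_{n-1}(F(p))\to \KMW_{n-1}(F),
\]
whose restriction to the $p$-th summand is $f_p$. Writing $R_n:=\sum_p\partial_p^p$ for the total residue map, the defining equation $\sum(f_p\circ\partial_p^p)+\partial_\infty^{-\frac 1t}=0$ is equivalent to $T\circ R_n=-\partial_\infty^{-\frac 1t}$, so giving a collection $(f_p)_p$ is the same as giving such a $T$.

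Second, I would invoke Theorem \ref{thm:fundamental}, which asserts that $R_n$ is surjective with kernel the image of $\KMW_n(F)\to\KMW_n(F(t))$. The remark preceding Definition \ref{defn:geometrictransfers} already records that the composite $\KMW_n(F)\to\KMW_n(F(t))\xrightarrow{-\partial_\infty^{-\frac 1t}}\KMW_{n-1}(F)$ vanishes. Hence $-\partial_\infty^{-\frac 1t}$ descends through $R_n$, and the universal property of the cokernel produces a unique $T$ with $T\circ R_n=-\partial_\infty^{-\frac 1t}$. Projecting to each summand yields the required unique $f_p$.

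Finally, I would verify that this canonical $T$ recovers the geometric transfers. Since the sequence splits, choose any section $s$ of $R_n$. For $\alpha\in\KMW_{n-1}(F(p))$ viewed via its inclusion $i_p$ into the direct sum, we have $R_n(s(i_p(\alpha)))=i_p(\alpha)$, so
\[
f_p(\alpha)=T(i_p(\alpha))=T(R_n(s(i_p(\alpha))))=-\partial_\infty^{-\frac 1t}(s(i_p(\alpha)))=\tau_F^{F(p)}(\alpha)
\]
by Definition \ref{defn:geometrictransfers}. There is no real obstacle here: the lemma is a purely formal consequence of the universal property of the cokernel, combined with the vanishing already used to make Definition \ref{defn:geometrictransfers} independent of the section $s$.
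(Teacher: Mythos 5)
Your proposal is correct and follows exactly the route the paper intends: the lemma is stated as ``a consequence of the definition of a cokernel,'' and your argument simply spells out that universal property applied to the exact sequence of Theorem~\ref{thm:fundamental}, using the vanishing of $-\partial_{\infty}^{-\frac 1t}$ on the image of $\KMW_n(F)$ and the splitting to identify the resulting map with $\sum\tau_F^{F(p)}$. Nothing is missing.
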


\begin{rem}\label{rem:Witttransfers}
This unicity property is useful when trying to identify the geometric transfers in the context of symmetric bilinear forms. Indeed, if $n\leq -1$, the geometric transfers give transfer maps 
\[
\W(F(p))\to \W(F)
\]
at the level of Witt groups. Now, transfers for Witt groups are usually obtained via the so-called Scharlau transfer map. For any nontrivial homomorphism of $F$-vector spaces $f_p:F(p)\to F$, one obtains a transfer homomorphism $(f_p)_*:\W(F(p))\to \W(F)$ by considering the composite
\[
V\times V\to F(p)\stackrel{f_p}\to F
\]
for any symmetric bilinear map $V\times V\to F(p)$. In our case, $F(p)$ is a monogeneous field extension, with basis $\{1,t,\ldots,t^{d-1}\}$ where $d=\deg(p)$ and we can consider the homomorphism $f_p:F(p)\to F$ defined by $f(t^i)=0$ for $i=0,\ldots,d-2$ and $f(t^{d-1})=1$. In characteristic different from $2$, we may use \cite[Theorem 4.1]{Scharlau72} to see that the geometric transfers are the ones obtained using the homomorphisms $f_p$ we just defined. The reader can adapt the proof of \emph{loc. cit.} to the case of characteristic $2$ as well.
\end{rem}

\begin{rem}
The main problem with the geometric transfers is that they actually depend on choices. This is one of the main reasons for introducing a slightly different (and more subtle) transfer in Section \ref{sec:twistedMW}.
\end{rem}

\subsection{Graded line bundles}

Let $X$ be a connected scheme, and let $\mathcal P(X)$ be the category of invertible $\OO_X$-modules (or, equivalently, the category of line bundles over $X$). The category $\mathcal P(X)$ is a symmetric monoidal category (in the sense of \cite[Chapter VII, \S 7]{MacLane98}), where the operation is the tensor product of invertible $\OO_X$-modules
\[
(L,N)\mapsto L\otimes_{\OO_X} N,
\]
the unit is the module $\OO_X$ and the associativity and commutativity constraints are the usual ones.
We'll drop the subscript on the tensor product in the sequel. We have three useful isomorphisms, namely the switch isomorphisms $s:L\otimes N\to N\otimes L$ (which satisfies $s\otimes t\mapsto t\otimes s$ on sections), the isomorphism $u:L\otimes L^\vee\simeq \OO_X$ given on sections by $l\otimes \varphi\mapsto \varphi(l)$ and the isomorphism $us:L^\vee\otimes L\simeq \OO_X$. The category $\mathcal G(X)$ of graded line bundles on $X$ is the category whose objects are pairs $(L,a)$ where $L$ is an object of $\mathcal P(X)$ and $a\in \ZZ$ and whose morphisms are of the form
\[
\hom_{\mathcal G(X)}((L,a),(L^\prime,a^\prime))=\begin{cases} \emptyset & \text{ if $a\neq a^\prime$.} \\ \mathrm{Isom}_{\mathcal P(X)}(L,L^\prime) & \text{ if $a=a^\prime$.} \end{cases}
\]
In particular, all morphisms in this category are isomorphisms. We may enrich $\mathcal G(X)$ with a symmetric monoidal structure whose tensor product is defined by 
\[
(L,a)\otimes (L^\prime,a^\prime):=(L\otimes L^\prime,a+a^\prime).
\]
The associativity relation is induced by the associativity relation in $\mathcal P(X)$. The unit is the pair $(\OO_X,0)$ with isomorphisms $(L,a)\otimes (\OO_X,0)\simeq (L,a)$ and $(\OO_X,0)\otimes (L,a)\simeq (L,a)$ induced by the isomorphisms $L\otimes \OO_X\simeq L$ given on sections by $l\otimes a\mapsto al$ (and $a\otimes l\mapsto al$ for the other isomorphism). Finally, the symmetry isomorphism $(L,a)\otimes (L^\prime,a^\prime)\simeq (L^\prime,a^\prime)\otimes (L,a)$ is $(-1)^{aa^\prime}s$ (where $s$ is the switch isomorphism defined above). In $\mathcal G(X)$, each object is invertible. Indeed, we note that $(L,a)\otimes (L^\vee,-a)=(L\otimes L^\vee,0)\stackrel{u}\to (\OO_X,0)$. By symmetry, we see that 
\[
(L^\vee,-a)\otimes (L,a)\simeq (L,a)\otimes (L^\vee,-a)\stackrel{u}\simeq (\OO_X,0)
\]
showing that $(L,a)$ has also a right inverse. Note that the above isomorphism is not induced by $us$, but by $(-1)^aus$. All in all, $\mathcal G(X)$ is a Picard category in the sense of \cite[\S 4]{Deligne87}.

The main purpose of $\mathcal G(X)$ is to understand the properties of the determinant of a vector bundle over $X$. Indeed, let $\mathcal V(X)$ be the category of vector bundles on $X$ (with only isomorphisms as morphisms). there is a functor 
\[
D:\mathcal V(X)\to \mathcal G(X)
\]
given on objects by $V\mapsto (\det V,\mathrm{rk}(V))$ and on morphisms by $f\mapsto \det f$. For each exact sequence $(V)$ of the form
\[
0\to V_1\stackrel{i}\to V_2 \stackrel{p}\to V_3\to 0
\]
with $\mathrm{rk}(V_i)=a_i$, we have an isomorphism 
\[
\varphi_{(V)}:\det V_1\otimes \det V_3\to \det V_2
\]
defined as follows. Locally, the sequence splits, i.e. there exists a monomorphism $s:V_3\to V_2$ such that $ps=\mathrm{id}$. We may then define locally $\varphi_{(V)}$ by 
\[
(v_1\wedge\ldots\wedge v_{a_1})\otimes (w_1\wedge\ldots\wedge w_{a_3})\mapsto v_1\wedge\ldots\wedge v_{a_1}\wedge s(w_1)\wedge\ldots\wedge s(w_{a_3}).
\]
It is straightforward to check that this definition doesn't depend on the choice of the section $s$ and thus that one can glue the local definitions to get a global version. The isomorphism $\varphi_{(V)}$ readily gives an isomorphism $D(V_1)\otimes D(V_3)\simeq D(V_2)$ that we still denote by $\varphi_{(V)}$. This isomorphism is functorial in the sense that if we have a commutative diagram
\[
\xymatrix{0\ar[r] & V_1\ar[r]\ar[d] & V_2\ar[r]\ar[d] & V_3\ar[r]\ar[d] & 0 \\
0\ar[r] & W_1\ar[r] & W_2\ar[r] & W_3\ar[r] & 0}
\]
of exact sequences in which the vertical arrows are isomorphisms, then the diagram
\[
\xymatrix{D(V_1)\otimes D(V_3)\ar[r]^-{\varphi_{(V)}}\ar[d] &D(V_2)\ar[d] \\
D(W_1)\otimes D(W_3)\ar[r]_-{\varphi_{(W)}} & D(W_2)}
\]
in which the vertical arrows are the isomorphisms induced by the vertical arrows above, is also commutative. Moreover, we canonically have that $D(0)=(\OO_X,0)$. This datum satisfies the conditions of \cite[\S 4.3]{Deligne87}. In particular, suppose that we have a sequence of (admissible) monomorphisms of vector bundles
\[
0\subset W\subset V\subset E. 
\]
Then, the following diagram 
\[
\xymatrix{D(W)\otimes D(V/W)\otimes D(E/V)\ar[r]\ar[d] & D(V)\otimes D(E/V)\ar[d] \\
D(W)\otimes D(E/W)\ar[r] & D(E)}
\]
is commutative. Finally, let us mention that the isomorphism $\varphi_{(V)}$ is compatible with the commutativity of $\mathcal G(X)$ in the sense that if we can write $V_2=V_1\oplus V_3$ then the two exact sequences 
\[
\xymatrix{(V) & 0\ar[r]  &V_1\ar[r] & V_2\ar[r] & V_3\ar[r] & 0 }
\]
and 
\[
\xymatrix{(V^\prime) & 0\ar[r]  &V_3\ar[r] & V_2\ar[r] & V_1\ar[r] & 0 }
\]
give a commutative diagram
\[
\xymatrix{D(V_1)\otimes D(V_3)\ar[rr]\ar[rd]_-{\varphi_{(V)}} & & D(V_3)\otimes D(V_1)\ar[ld]^-{\varphi_{(V^\prime)}} \\
 & D(V_2) & }
\]
in which the horizontal arrow is the commutativity isomorphism in $\mathcal G(X)$.

As a result of this general formalism, it is in principle possible to understand (sometimes at the cost of cumbersome computations) the isomorphisms of determinant bundles associated to various exact sequences of vector bundles. This will be particularly useful when dealing with twisted Milnor-Witt $K$-theory. Before spending a section introducing the exact sequences we will use in this work, we conclude this section by mentioning various additional properties of $\mathcal G(X)$. First, we supposed at the beginning of this section that $X$ was connected. When dealing with non-connected schemes, it is convenient to replace the integer appearing in the definition of a graded line bundle with a locally constant integer. The theory and constructions are evidently the same in this slightly more general context. Second, suppose that we have two (connected) schemes $X$ and $Y$ together with a morphism of schemes $f:X\to Y$. Given a graded line bundle $(L,a)$ on $Y$, we may consider the graded line bundle $(f^*L,a)$ on $X$. This defines a functor $f^*:\mathcal G(Y)\to \mathcal G(X)$ that we will use frequently in the rest of these notes.

\subsection{Useful exact sequences}

For any scheme $X$, we denote by $\Omega_{X/k}$ the sheaf of differentials of $X$ over $k$. In case $X$ is smooth and connected, then $\Omega_{X/k}$ is a vector bundle of rank $d_X:=\mathrm{dim}(X)$. We may then consider the graded line bundle $D(\Omega_{X/k})=(\omega_{X/k},d_X)$ in $\mathcal G(X)$ (where $\omega_{X/k}=\det \Omega_{X/k}$). In case $X$ is not connected, then the rank of $\Omega_{X/k}$ is a locally constant integer and we may still consider $D(\Omega_{X/k})$ in $\mathcal G(X)$ in the slightly more general sense we discussed above. We always assume that the schemes we use are connected and let the reader make the necessary changes in the general situation.

Let now $f:X\to Y$ be a smooth morphism of smooth schemes. Then, we have an exact sequence (\cite[II.8]{Hartshorne77})
\[
f^*\Omega_{Y/k}\to \Omega_{X/k}\to \Omega_{X/Y}\to 0
\]
which is also exact on the left as $f$ is smooth. Consequently, we obtain an isomorphism of graded line bundles
\[
D(f^*\Omega_{Y/k})\otimes D(\Omega_{X/Y})\simeq D(\Omega_{X/k})
\]
Using the functor $\mathcal G(Y)\to \mathcal G(X)$ defined in the previous section, we may write the isomorphism as an isomorphism
\begin{equation}\label{eqn:canonical1}
f^*D(\Omega_{Y/k})\otimes D(\Omega_{X/Y})\simeq D(\Omega_{X/k}).
\end{equation}

We will frequently use this isomorphism, or the isomorphisms associated to it after performing elementary operations involving taking inverses and permutations. For instance, we can transform (\ref{eqn:canonical1}) into an isomorphism
\[
(\omega_{X/Y},d_X-d_Y)\simeq (\omega_{X/k},d_X)\otimes (\omega_{Y/k}^\vee,-d_Y)
\]
multiplying first on the left by $(\omega_{Y/k}^\vee,-d_Y)$ and then permuting the latter with $(\omega_{X/k},d_X)$.

Suppose now that we have a Cartesian square of smooth schemes
\[
\xymatrix{X^\prime\ar[r]^-{v}\ar[d]_-g & X\ar[d]^-f \\
Y^\prime\ar[r]_-u & Y}
\]
with $u$ (and thus also $v$) smooth. We then have a canonical isomorphism (\cite[Corollary 4.3]{Kunz86})
\begin{equation}\label{eqn:canonical2}
g^*D(\Omega_{Y^\prime/Y})=D(g^*\Omega_{Y^\prime/Y})\simeq D(\Omega_{X^\prime/X})
\end{equation}

Finally, let us consider the situation where $i:X\to Y$ is a regular embedding of smooth schemes. Let $C_XY$ be the conormal bundle to $X$ in $Y$ and let $N_XY$ be the normal cone to $X$ in $Y$. We then have an exact sequence
\[
C_XY\to i^*\Omega_{Y/k}\to \Omega_{X/k}\to 0
\]
which is in fact exact on the left. Therefore, we obtain a canonical isomorphism
\begin{equation}\label{eqn:canonical3}
D(C_XY)\otimes D(\Omega_{X/k})\simeq i^*D(\Omega_{Y/k})
\end{equation}

As above, we may modify this isomorphism into an isomorphism
\[
D(\Omega_{X/k})\simeq (\det N_XY,d_X-d_Y)\otimes i^*D(\Omega_{Y/k})
\]
by tensoring on the left by $(\det N_XY,d_X-d_Y)$.

\subsection{Twisted Milnor-Witt $K$-theory}\label{sec:twistedMW}

Let $F$ be a field (which is a finitely generated field extension of $k$) and let $(L,a)\in \mathcal G(F)$ be a graded line bundle. We set $L^0$ for the set of nonzero elements of $L$ and we observe that $F^\times$ acts on $L^0$ in an obvious way. It follows that the free abelian group $\ZZ[L^0]$ on the set $L^0$ is endowed with an action of the group algebra $\ZZ[F^\times]$. On the other hand, we know that $\KMW_*(F)$ is a $\KMW_0(F)$-algebra. Besides, we have a map $F^\times \to \KMW_0(F)$ given by $u\mapsto \langle u\rangle$. This induces a ring homomorphism $\ZZ[F^\times]\to \KMW_0(F)$ and it follows that $\KMW_*(F)$ is a $\ZZ[F^\times]$-algebra. The following definition is due to Morel.

\begin{defn}
Let $(L,a)\in \mathcal G(F)$. We define 
\[
\KMW_*(F,L,a):=\KMW_*(F)\otimes_{\ZZ[F^\times]}\Z[L^0]
\]
which we call the \emph{Milnor-Witt $K$-theory of $F$ twisted by $(L,a)$}.
\end{defn}

Even though $a$ doesn't appear in the above definition, it is important to keep track of it. Indeed, we'll sometimes have to compare Milnor-Witt $K$-theory twisted by $(L,a)\otimes (L,a^\prime)$ with Milnor-Witt $K$-theory twisted by $(L,a^\prime)\otimes (L,a)$. We'll then use the commutativity isomorphism in $\mathcal G(X)$, which makes use of both $a$ and $a^\prime$. Another important thing to note is that $\KMW_*(F,L,a)$ is not a ring anymore, but a priori merely a graded abelian group. In fact, it has the structure of a $\KMW_*(F)$-module as we will shortly see. Before, let us observe that the action of $F^\times$ on $L^0$ is simply transitive. It follows that the choice of any non trivial element $l\in L^0$ yields an isomorphism
\[
\KMW_*(F)\simeq \KMW_*(F,L,a)
\] 
given by $\alpha\mapsto \alpha\otimes l$. In view of this fact, we will often denote an element of $\KMW_*(F,L,a)$ by $\alpha\otimes l$ with $\alpha\in \KMW_*(F)$ and $l\in L^0$. Using this notation, we may write the $\KMW_*(F)$-module structure of $\KMW_*(F,L,a)$ as 
\[
\KMW_*(F)\times \KMW_*(F,L,a)\to \KMW_*(F,L,a)
\]
mapping $(\beta,\alpha\otimes l)$ to $(\beta\alpha)\otimes l$. More generally, let $(L,a)$ and $(L^\prime,a^\prime)$ be two graded line bundles over $F$. Then, we define a pairing
\[
\KMW_*(F,L,a)\times \KMW_*(F,L^\prime,a^\prime)\to \KMW_*(F,(L,a)\otimes (L^\prime,a^\prime))
\]
given by $(\alpha\otimes l,\beta\otimes l^\prime)\mapsto (\alpha\beta)\otimes (l\otimes l^\prime)$. We let the reader check that the diagram 
\[
\xymatrix{\KMW_n(F,L,a)\otimes \KMW_m(F,L^\prime,a^\prime)\ar[r]\ar[d] & \KMW_{m+n}(F,(L,a)\otimes (L^\prime,a^\prime))\ar[d] \\
\KMW_m(F,L^\prime,a^\prime) \otimes \KMW_n(F,L,a)\ar[r] & \KMW_{m+n}(F,(L^\prime,a^\prime)\otimes (L,a)),}
\]
in which the left vertical map is the switch homomorphism and the right vertical map is the isomorphism induced by the commutativity rule in $\mathcal G(F)$, is $\langle (-1)^{aa^\prime}\rangle\epsilon^{mn}$-commutative. 

The main idea behind considering twisted Milnor-Witt $K$-theory is the definition of a residue homomorphism as in Theorem \ref{thm:residue} which is independent of the choice of the uniformizing parameter. Let then $F$ be a field with a discrete valuation $v:F\to \ZZ\cup\{-\infty\}$. Let $\OO_v$ be the associated valuation ring with maximal ideal $\mathfrak m_v$ and residue field $\kappa(v)$, and let $(L,a)$ be a graded line bundle on $\OO_v$. We define the \emph{twisted residue map}
\begin{equation}\label{eqn:twistedresidue}
\partial_v:\KMW_*(F,L_F,a)\to \KMW_{*-1}(\kappa(v),((\mathfrak m_v/\mathfrak m_v^2)^*,-1)\otimes (L_{\kappa(v)},a))
\end{equation}
by $\partial_v(\alpha\otimes l)=\partial_v^{\pi}(\alpha)\otimes \overline \pi^*\otimes l$. Here, $(\mathfrak m_v/\mathfrak m_v^2)^*$ is the dual (as $\kappa(v)$-vector spaces) of $\mathfrak m_v/\mathfrak m_v^2$, $L_F$ and $L_{\kappa_v}$ are the respective restrictions of $L$ to $F$ and $\kappa(v)$, $\pi$ is a uniformizing parameter and $\overline \pi^*$ is the dual of the class $\overline \pi$ of $\pi$ in $\mathfrak m_v/\mathfrak m_v^2$.

\begin{lem}
The homomorphism $\partial_v$ is well-defined and doesn't depend on the choice of a uniformizing parameter.
\end{lem}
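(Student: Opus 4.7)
The plan is to interpret the defining formula of $\partial_v$ carefully. After choosing a generator $e$ of $L$ as an $\OO_v$-module (so $\overline e$ is a generator of $L_{\kappa(v)}$ over $\kappa(v)$), every element of $\KMW_*(F,L_F,a)$ can be written as $\alpha\otimes ce$ with $c\in F^\times$, and one reads the definition as
\[
\partial_v(\alpha\otimes ce):=\partial_v^\pi\bigl(\alpha\langle c\rangle\bigr)\otimes\overline\pi^{*}\otimes\overline e.
\]
The argument then splits into three checks: (i) the formula respects the $F^\times$-balancing in $\KMW_*(F)\otimes_{\ZZ[F^\times]}\ZZ[L_F^0]$; (ii) the result does not depend on the choice of generator $e$; (iii) the result does not depend on the uniformizer $\pi$.

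Step (i) follows immediately from $\langle ab\rangle=\langle a\rangle\langle b\rangle$, a direct consequence of $\langle a\rangle=1+\eta[a]$ and the relation $[ab]=[a]+[b]+\eta[a][b]$. For step (ii) it suffices to compare the answers for $e$ and $e'=ve$ with $v\in\OO_v^\times$: combining the earlier identity $\partial_v^\pi(\langle v\rangle\alpha)=\langle\overline v\rangle\partial_v^\pi(\alpha)$, the $\kappa(v)^\times$-balancing on the target twisted $K$-theory, and $\langle\overline v\rangle\langle\overline v^{-1}\rangle=\langle 1\rangle=1$, the two expressions coincide.

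The heart of the proof is step (iii). Let $\pi'=u\pi$ with $u\in\OO_v^\times$, so that $\overline{\pi'}^{*}=\overline u^{-1}\overline\pi^{*}$ in $(\mathfrak m_v/\mathfrak m_v^2)^*$. I will first establish the auxiliary identity
\[
\partial_v^{\pi'}(\alpha)=\langle\overline u\rangle\,\partial_v^\pi(\alpha)\qquad\text{for every }\alpha\in\KMW_*(F).
\]
By the uniqueness part of Theorem~\ref{thm:residue}, it is enough to check that $\alpha\mapsto\langle\overline u\rangle\partial_v^\pi(\alpha)$ is a graded degree $-1$ map commuting with $\eta$ that vanishes on $[u_1,\ldots,u_n]$ with $u_i\in\OO_v^\times$ and sends $[\pi',u_1,\ldots,u_n]$ to $[\overline{u_1},\ldots,\overline{u_n}]$. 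The first two properties are immediate; the last follows by expanding $[\pi']=[u]+\langle u\rangle[\pi]$, applying $\partial_v^\pi(\langle u\rangle\beta)=\langle\overline u\rangle\partial_v^\pi(\beta)$ once more, and collapsing via $\langle\overline u\rangle^2=1$.

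Given this identity, independence of $\pi$ becomes pure bookkeeping in the target:
\begin{align*}
\partial_v^{\pi'}(\alpha)\otimes\overline{\pi'}^{*}\otimes\overline e
&=\langle\overline u\rangle\partial_v^\pi(\alpha)\otimes\overline u^{-1}\overline\pi^{*}\otimes\overline e\\
&=\langle\overline u\rangle\partial_v^\pi(\alpha)\langle\overline u^{-1}\rangle\otimes\overline\pi^{*}\otimes\overline e\\
&=\partial_v^\pi(\alpha)\otimes\overline\pi^{*}\otimes\overline e,
\end{align*}
using the $\kappa(v)^\times$-balancing on the target and $\langle\overline u\rangle\langle\overline u^{-1}\rangle=1$. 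The main obstacle is thus the comparison $\partial_v^{\pi'}=\langle\overline u\rangle\partial_v^\pi$ between the untwisted residues; once that formula is in place, the tensor relation on the target forces everything else.
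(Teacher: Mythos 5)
Your proof is correct and follows essentially the same route as the paper: the key point in both is the comparison $\partial_v^{\pi'}=\langle\overline u\rangle\,\partial_v^{\pi}$ for $\pi'=u\pi$ (the paper gets it by the manipulation of Remark~\ref{rem:dependsonchoice}, you by the uniqueness clause of Theorem~\ref{thm:residue}), combined with $\overline{\pi'}^*=\overline u^{-1}\overline\pi^*$ and the $\kappa(v)^\times$-balancing in the twisted target. Your steps (i) and (ii) simply spell out the general-twist and generator-independence checks that the paper dismisses with ``the general case is similar,'' so the arguments match in substance.
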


\begin{proof}
We prove the result for $(L,a)=(\OO_v,0)$ for simplicity. The general case is similar. We are then left to show that the association $\alpha\mapsto \partial_v^{\pi}(\alpha)\otimes \overline \pi^*$ is well-defined and independent of $\pi$. The first assertion is clear as $\partial_v^\pi$ is well-defined. Let now $\pi^\prime$ be another uniformizing parameter. There exists then $u\in \OO_v^\times$ such that $u\pi=\pi^\prime$. Arguing as in Remark \ref{rem:dependsonchoice}, we see that  $\partial_v^{\pi}=\langle u\rangle \partial_v^{\pi^\prime}$. It follows thus from $\overline\pi^*=u^{-1}(\overline\pi^\prime)^*$ that 
\[
\partial_v^{\pi}(\alpha)\otimes \overline \pi^*=\langle u\rangle\partial_v^{\pi^\prime}(\alpha)\otimes \overline \pi^*=\langle u\rangle\partial_v^{\pi^\prime}(\alpha)\otimes u^{-1}(\overline\pi^\prime)^*=\partial_v^{\pi^\prime}(\alpha)\otimes (\overline\pi^\prime)^*.
\]
\end{proof}

Having this twisted residue homomorphism in the pocket, we now turn to the task to define the transfer morphisms for twisted groups. Let then $F/k$ be a finitely generated field extension. We consider the $F[t]$-module $\Omega_{F[t]/k}$ (which is free of rank equal to $\trdeg(F/k)+1$ as $k$ is perfect). For any monic irreducible polynomial $p\in F[t]$, we can consider the $p$-adic valuation and we obtain a twisted residue homomorphism
\[
\partial_p:\KMW_*(F(t),D(\Omega_{F(t)/k}))\to \KMW_{*-1}(F(p),((\mathfrak m_p/\mathfrak m_p^2)^*,-1)\otimes_{F[t]} D(\Omega_{F[t]/k})).
\]
Now, the canonical isomorphism (\ref{eqn:canonical3}) reads in this case as
\[
(\mathfrak m_p/\mathfrak m_p^2,1)\otimes D(\Omega_{F(p)/k})\simeq F(p)\otimes D(\Omega_{F[t]/k})
\]
and we get a canonical isomorphism $D(\Omega_{F(p)/k})\simeq ((\mathfrak m_p/\mathfrak m_p^2)^*,-1)\otimes_{F[t]} D(\Omega_{F[t]/k})$. The above residue map can then be written
\[
\partial_p:\KMW_*(F(t),D(\Omega_{F(t)/k}))\to \KMW_{*-1}(F(p),D(\Omega_{F(p)/k})).
\]
It follows that we get a total residue homomorphism
\[
d:\KMW_*(F(t),D(\Omega_{F(t)/k}))\to \bigoplus_{p}\KMW_{*-1}(F(p),D(\Omega_{F(p)/k}))
\]
where the index set on the right-hand side runs through the set of monic irreducible polynomials in $F[t]$. On the other hand, the sequence $k\subset F\subset F[t]$ gives a version of the canonical isomorphism (\ref{eqn:canonical1}) of the form
\[
D(\Omega_{F/k})\otimes D(\Omega_{F[t]/F})\simeq D(\Omega_{F[t]/k}).
\]
Using the pairing with $\langle 1\rangle\otimes dt\in \KMW_0(F[t],D(\Omega_{F[t]/F}))$ and the field extension $F\subset F(t)$, we then obtain a homomorphism
\[
\KMW_*(F,D(\Omega_{F/k}))\to \KMW_*(F(t),D(\Omega_{F(t)/k})).
\]
\begin{prop}\label{prop:twistedhomotopy}
The sequence
\[
0\to \KMW_*(F,D(\Omega_{F/k}))\to \KMW_*(F(t),D(\Omega_{F(t)/k}))\stackrel{d}\to \bigoplus_{p}\KMW_{*-1}(F(p),D(\Omega_{F(p)/k}))\to 0
\]
is split exact.
\end{prop}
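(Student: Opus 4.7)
The plan is to trivialize the graded line bundles appearing in the three groups of the sequence and thereby reduce to the untwisted statement of Theorem~\ref{thm:fundamental}. Over a field every line bundle is trivial, so any nonzero element $\omega_F \in \omega_{F/k}$ yields an isomorphism $\KMW_*(F, D(\Omega_{F/k})) \simeq \KMW_*(F)$ via $\alpha \otimes \omega_F \mapsto \alpha$; analogous trivializations will handle the middle and right terms of the sequence.

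I would then propagate $\omega_F$ coherently as follows. Using the canonical isomorphism~\eqref{eqn:canonical1} associated to $k \subset F \subset F[t]$, together with $dt$ as a basis of $\Omega_{F[t]/F}$, I take $\omega_F \wedge dt$ as a trivialization of $D(\Omega_{F(t)/k})$. This is precisely the datum built into the leftmost arrow of the sequence, which pairs with $\langle 1\rangle \otimes dt \in \KMW_0(F[t], D(\Omega_{F[t]/F}))$ before extending scalars; consequently, under the two trivializations the leftmost map becomes the inclusion $\KMW_*(F) \to \KMW_*(F(t))$ of Theorem~\ref{thm:fundamental}. For each monic irreducible $p \in F[t]$, the class $\overline p$ is a basis of $\mathfrak m_p/\mathfrak m_p^2$, and the canonical isomorphism~\eqref{eqn:canonical3} for the regular embedding $\Spec F(p) \hookrightarrow \Spec F[t]$ converts $\overline p^* \otimes (\omega_F \wedge dt)|_{F(p)}$ into a distinguished trivialization $\omega_{F(p)}$ of $\omega_{F(p)/k}$. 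Since by construction the twisted residue satisfies $\partial_p(\alpha \otimes l) = \partial_p^\pi(\alpha) \otimes \overline \pi^* \otimes l$ and is independent of $\pi$, taking $\pi = p$ shows that under these trivializations $\partial_p$ becomes exactly the classical residue $\partial_p^p$ of Theorem~\ref{thm:fundamental}.

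Once these compatible trivializations are in place, the twisted sequence is identified term-by-term with the split short exact sequence of Theorem~\ref{thm:fundamental}, and both exactness and the existence of a splitting follow immediately. The main technical point I anticipate is the verification that the trivialization $\omega_{F(p)}$ of $\omega_{F(p)/k}$ coming from~\eqref{eqn:canonical3} is indeed the one implicitly used in the definition~\eqref{eqn:twistedresidue} of the twisted residue, i.e.\ that the conormal isomorphism $\varphi_{(V)}$ sends $\overline p \otimes \omega_{F(p)}$ to (the restriction of) $\omega_F \wedge dt$ with no hidden sign contributed by the commutativity constraint in $\mathcal G$. This reduces to unwinding the construction of $\varphi_{(V)}$ recalled in the section on graded line bundles, which is the only place where careful bookkeeping is required; once done, the splitting and exactness are inherited directly from the untwisted case.
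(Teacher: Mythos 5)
Your argument is correct and is essentially the paper's own proof: choose a generator $l$ of $\omega_{F/k}$, use $l\wedge dt$ as a generator of $\omega_{F[t]/k}$ via the isomorphism (\ref{eqn:canonical1}), and use the generators $\overline p^*$ of $(\mathfrak m_p/\mathfrak m_p^2)^*$ to identify the twisted sequence with the split exact sequence of Theorem \ref{thm:fundamental}. The only difference is that you flag explicitly the compatibility check between the trivialization from (\ref{eqn:canonical3}) and the one implicit in the twisted residue (\ref{eqn:twistedresidue}), which the paper leaves tacit; that bookkeeping is harmless and your conclusion stands.
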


\begin{proof}
Any choice of a generator $l$ of $\omega_{F/k}$ (which is the determinant of $\Omega_{F/k}$ and thus a $F$-vector space of dimension $1$) will give a generator $l\wedge dt$ of $\omega_{F[t]/k}$ (by the isomorphism (\ref{eqn:canonical1})). Using these generators together with the generators $\overline p^*$ of $(\mathfrak m_p/\mathfrak m_p^2)^*$, we obtain an isomorphism of sequences between the sequence of the statement and the sequence of Theorem \ref{thm:fundamental}. The claim follows then immediately.
\end{proof}

\begin{rem}
Of course, one may further twist this sequence with a graded line bundle $(L,a)$ over $F$ and obtain the same result.
\end{rem}

This exact sequence allows us to define transfers as in Definition \ref{defn:geometrictransfers}. 

\begin{defn}
Let $p$ be a monic irreducible polynomial in $F[t]$, we denote by 
\[
\mathrm{Tr}^{F(p)}_F:\KMW_{n-1}(F(p),D(\Omega_{F(p)/k}))\to \KMW_{n-1}(F,D(\Omega_{F/k}))
\]
obtained mimicking the procedure of Definition \ref{defn:geometrictransfers}. We call \emph{canonical transfer} this homomorphism.
\end{defn}

The canonical transfers satisfy the same universal property as the one for geometric transfers stated in Lemma \ref{lem:uniquegeom}. Moreover, they coincide with Morel's absolute transfers as defined in \cite[\S 5.1]{Morel08} (in any characteristic) as evidenced by the following example which treats the separable case.

\begin{ex}
Suppose that the extension $F(p)/F$ is separable. Then, an instructing calculation shows that the transfer map $\mathrm{Tr}^{F(p)}_F$ can be described as follows. Let $l$ be a generator of $\omega_{F/k}$. In view of the isomorphism $\Omega_{F(p)/k}\simeq \Omega_{F/k}\otimes F(p)$, we may consider $l$ as a generator of $\omega_{F(p)/k}$ as well. For any $\alpha\in \KMW_*(F(p))$, we then find the formula
\[
\mathrm{Tr}^{F(p)}_F(\alpha\otimes l)=\tau^{F(p)}_F(\langle p^\prime\rangle\alpha)\otimes l.
\]
\end{ex}

As in the case of geometric transfers, we may have a look at the transfers on Witt groups $\W(F(p),\omega_{F(p)/k})\to\W(F,\omega_{F/k})$ induced by the canonical transfers. In the separable case, the above example, Remark \ref{rem:Witttransfers} and \cite[III,\S 6, Lemme 2]{Serre68} show that this transfer coincides with the one obtained using the trace form. Moreover, the universal property shows that the canonical transfers actually coincide with the ones defined by M. Schmid in his thesis in characteristic different from $2$ (\cite[\S 2]{Schmid98}).

Suppose now that $k\subset F\subset L$ are field extensions such that $L/F$ is finite. We can find a filtration
\[
F=F_0\subset F_1\subset F_2\subset \ldots \subset F_n=L
\]
such that $F_i/F_{i-1}$ is monogeneous for $i=1,\ldots,n$. In that case, we define a transfer homomorphism
\[
\mathrm{Tr}^L_F:\KMW_*(L,D(\Omega_{L/k}))\to \KMW_*(F,D(\Omega_{F/k})) 
\]
as the composite $\mathrm{Tr}^L_F=\mathrm{Tr}^{F_1}_F\circ\ldots\circ\mathrm{Tr}^{F_{n-1}}_{F_{n-2}}\circ \mathrm{Tr}^{L}_{F_{n-1}}$. It is a priori far from clear that this transfer homomorphism is well-defined (i.e. that it doesn't depend on the choice of the filtration above).  This is proved by F. Morel in \cite[Theorem 4.27, \S 5]{Morel08} in full generality. Note however that in case the extension is separable one may use the above discussion (i.e. the fact that canonical transfers are actually induced by the trace form) to deduce that the transfer is well defined. In characteristic different from $2$, we may also deduce the result using \cite[Proposition 2.2.5]{Schmid98}. In any case, we can write the following definition.

\begin{defn}\label{defn:canonicaltransfer}
Let $k\subset F\subset L$ be field extensions such that $L/F$ is finite. We call the homomorphism 
\[
\mathrm{Tr}^L_F:\KMW_*(L,D(\Omega_{L/k}))\to \KMW_*(F,D(\Omega_{F/k})) 
\]
the \emph{canonical transfer}.
\end{defn}

As observed above, it's also possible to further twist $\mathrm{Tr}^L_F$ by a graded line bundle $(L,a)$ in $\mathcal G(F)$. We leave the details to the reader. As a consequence of \cite[Theorem 4.27, \S 5]{Morel08}, we also note that the canonical transfers are functorial in the field extensions of $k$.

\section{The Rost-Schmid complex and Chow-Witt groups}

In the first lecture, we introduced canonical transfers for finite field extensions $F\subset L$ (both field extensions of $k$). These transfers, together with the twisted residue homomorphism (\ref{eqn:twistedresidue}) are the two main ingredients in the construction of the Rost-Schmid complex(es) that we now introduce (always following Morel). 

\subsection{The complexes}

Let $X$ be a (finite type, separated) scheme over $k$ and let $(L,a)$ be a graded line bundle over $X$. For any $j\in\Z$, the Rost-Schmid complex $\tilde{C}(X,j,L,a)$ in weight $j$ is the complex (in homological dimension) whose term in degree $i$ is
\[
\tilde{C}_i(X,j,L,a):=\bigoplus_{x\in X_{(i)}} \KMW_{j+i}(k(x),D(\Omega_{k(x)/k})\otimes (L,a)).
\]
Let $x\in X_{(i)}$ and $y\in X_{(i-1)}$. The differential
\[
d_y^x:\KMW_{j+i}(k(x),D(\Omega_{k(x)/k})\otimes (L,a))\to \KMW_{j+i-1}(k(y),D(\Omega_{k(y)/k})\otimes (L,a))
\]
is defined as follows. If $y\not\in  \overline {\{x\}}$, then we set $d_y^x=0$. In the other case, let $Z$ be the normalization of $\overline {\{x\}}$. We have a finite morphism $f:Z\to X$. Removing if necessary points of codimension $\geq 2$ in $Z$ and their images in $X$, we may assume that we have a finite morphism $Z\to X$ and that $Z$ is smooth. In particular, we can consider its module of differentials $\Omega_{Z/k}$ (which is locally free of rank $i$). For any $z\in Z_{(i-1)}$ we then have a residue homomorphism (\ref{eqn:twistedresidue})
\[
d^x_z:\KMW_{j+i}(k(x),D(\Omega_{k(x)/k})\otimes (L,a))\to \KMW_{j+i-1}(k(z),((\mathfrak m_z/\mathfrak m_z^2)^*,-1)\otimes D(\Omega_{Z/k})\otimes (L,a))
\]
Using the canonical isomorphism (\ref{eqn:canonical3})
\[
((\mathfrak m_z/\mathfrak m_z^2)^*,-1)\otimes D(\Omega_{Z/k})\simeq D(\Omega_{k(z)/k})
\]
we obtain a homomorphism
\[
d^x_z:\KMW_{j+i}(k(x),D(\Omega_{k(x)/k})\otimes (L,a))\to \KMW_{j+i-1}(k(z),D(\Omega_{k(z)/k})\otimes (L,a))
\]
If $z\mapsto y$ under the map $f:Z\to X$, the field extension $k(y)\subset k(z)$ is finite and there is a canonical transfer homomorphism
\[
\KMW_{j+i-1}(k(z),D(\Omega_{k(z)/k})\otimes (L,a))\to \KMW_{j+i-1}(k(y),D(\Omega_{k(y)/k})\otimes (L,a)).
\]
Summing the composite maps over all $z\in Z$ mapping to $y$, we finally obtain a homomorphism
\[
d_y^x:\KMW_{j+i}(k(x),D(\Omega_{k(x)/k})\otimes (L,a))\to \KMW_{j+i-1}(k(y),D(\Omega_{k(y)/k})\otimes (L,a))
\]
as required. For a given $x$ and a given $\alpha\in \KMW_{j+i}(k(x),D(\Omega_{k(x)/k})\otimes (L,a))$, there are only finitely many $y\in X_{(i-1)}$ such that $d_y^x(\alpha)\neq 0$ and we then obtain a well defined homomorphism
\[
d_i: \tilde{C}_i(X,j,L,a)\to \tilde{C}_{i-1}(X,j,L,a).
\]
In short, we have a graded abelian group $\tilde{C}(X,j,L,a)$ together with a homomorphism $d$ of degree $-1$, i.e. a pair $(\tilde{C}(X,j,L,a),d)$. It is not clear at all that $d_{i-1}d_i=0$. We will shortly state the theorem summarizing the situation, but before we focus on the Rost-Schmid complex in the special situation where $X$ is smooth (and connected). In this case, we can consider the graded line bundle $D(\Omega_{X/k})$ on $X$ and the pair $(\tilde{C}(X,j,D(\Omega_{X/k})^{-1}),d)$ where $D(\Omega_{X/k})^{-1}$ is the inverse of $D(\Omega_{X/k})$ in $\mathcal G(X)$ (more explicitly, $D(\Omega_{X/k})^{-1}=(\omega_{X/k}^\vee,-\mathrm{dim}(X))$). Using the canonical isomorphism (\ref{eqn:canonical3}), we see that the graded components are of the form
\[
\tilde{C}_i(X,j,D(\Omega_{X/k})^{-1})= \bigoplus_{x\in X_{(i)}} \KMW_{j+i}(k(x),D(\mathfrak m_x/\mathfrak m_x^2)^{-1}).
\]
Setting $\mathrm{dim}(X):=d_X$, we may rewrite the above term as
\[
\tilde{C}_i(X,j,D(\Omega_{X/k})^{-1})= \bigoplus_{x\in X^{(d_X-i)}} \KMW_{j+i}(k(x),D(\mathfrak m_x/\mathfrak m_x^2)^{-1}).
\]
Now, we can consider the graded abelian group $C(X,j)$ whose component of degree $i$ is of the form 
\[
C(X,j)^i:=\bigoplus_{x\in X^{(i)}} \KMW_{j-i}(k(x),D(\mathfrak m_x/\mathfrak m_x^2)^{-1})
\]
and we have $\tilde{C}_i(X,j,D(\Omega_{X/k})^{-1})=C(X,j+d_X)^{d_X-i}$. It follows that we obtain a homomorphism $d:C(X,j)\to C(X,j)$ of degree $+1$. 

\begin{rem}
Of course, we may also consider the complex $C(X,j,L,a)$ for any graded line bundle $(L,a)$ over $X$.
\end{rem}

\begin{theorem}
Let $X$ be a finite type $k$-scheme, $j\in\ZZ$ be an integer and $(L,a)\in \mathcal G(X)$ be a graded line bundle over $X$. Then, $(\tilde{C}(X,j,L,a),d)$ is a (chain) complex provided $k$ is of characteristic different from $2$. If $X$ is smooth, $(C(X,j,L,a),d)$ is a (cochain) complex for $k$ of any characteristic.
\end{theorem}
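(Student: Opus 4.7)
The task is to verify $d \circ d = 0$. Fix $x \in X_{(i)}$, $z \in X_{(i-2)}$, and a symbol $\alpha \in \KMW_{j+i}(k(x), D(\Omega_{k(x)/k}) \otimes (L,a))$; I must show that $\sum_y (d_z^y \circ d_y^x)(\alpha) = 0$, where $y$ runs over the finitely many codimension-one points of $\overline{\{x\}}$ that specialize to $z$.

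The first step is to localize. Only $\overline{\{x\}}$ contributes to the sum, so one may replace $X$ by $\overline{\{x\}}$ and assume $X$ is integral with generic point $x$ and a fixed codimension-two point $z$. Normalizing and discarding a closed subset of codimension $\geq 2$ (precisely the manoeuvre built into the definition of $d$), then localizing at $z$ together with its finitely many preimages in the normalization, reduces the claim to a statement on a two-dimensional regular semi-local $k$-scheme, which after a further shrinking may be taken smooth over $k$.

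In this smooth two-dimensional case I would test the identity on generators $\eta^m [a_1, \ldots, a_r] \otimes l$ of twisted Milnor--Witt $K$-theory. Writing each $a_j$ in terms of a regular system of parameters at $z$, the check reduces to a short list of explicit configurations, just as in Rost's proof of $d^2 = 0$ for Milnor $K$-theory \cite[\S 2]{Rost96}. The extra work compared to Rost's argument is the bookkeeping of twists: each residue introduces a factor $((\mathfrak m_w/\mathfrak m_w^2)^*, -1)$, so comparing the two iterated residues requires invoking the canonical isomorphism (\ref{eqn:canonical3}) at both codimension-one and codimension-two steps, together with the symmetric monoidal structure on $\mathcal G(X)$; the signs produced when reordering tensor factors are precisely the $\langle -1 \rangle$'s accounted for by the twisted graded commutativity of $\KMW$.

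The main obstacle is passing from the smooth case to general $X$. When $\overline{\{x\}}$ or $\overline{\{y\}}$ is singular, the residues at the two steps are composed with canonical transfers along the finite extensions $k(y) \subset k(\tilde y)$ coming from the normalizations, and one must establish a reciprocity-style compatibility between twisted residues and those transfers---a twisted analogue of Lemma \ref{lem:uniquegeom}. This is exactly the point at which the hypothesis $\mathrm{char}(k) \neq 2$ enters in the general statement; for smooth $X$ no such normalization is required at the inner step, the transfers drop out, and the proof goes through in any characteristic. All the pieces of this strategy are worked out in detail in \cite[Chapter 5]{Morel08}, to which I would defer for the explicit symbol manipulations.
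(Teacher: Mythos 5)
Your plan is a direct, Rost-style verification of $d^2=0$ (reduce to a two-dimensional semi-local model, test on symbols, keep track of the twists via (\ref{eqn:canonical3})), which is a genuinely different route from the paper: the paper does not compute at all, but quotes \cite[Theorem 5.31]{Morel08} for smooth $X$ and \cite[\S 4.2.8]{Deglise17} for general finite type $X$, where in both cases $d\circ d=0$ comes from identifying the Rost--Schmid complex with a complex arising from a coniveau/niveau spectral sequence rather than from symbol manipulations. A computational proof along your lines is believable in outline, but as written it has two concrete problems.

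First, your explanation of the smooth/general dichotomy is incorrect. Even when $X$ is smooth, the closures $\overline{\{x\}}$ and $\overline{\{y\}}$ of the relevant points can be singular, so the differential is defined in exactly the same way as in the general case: normalize, discard a locus of codimension $\geq 2$, take twisted residues, and then apply the canonical transfers $\mathrm{Tr}^{k(z)}_{k(y)}$ of Definition \ref{defn:canonicaltransfer}. The transfers therefore do not ``drop out'' for smooth $X$, and Morel's characteristic-free proof must (and does) deal with them; conversely, the characteristic $\neq 2$ hypothesis in the non-smooth case is not forced by a failure of transfer--residue reciprocity in characteristic $2$, but is an artefact of the framework of the reference used there, as the paper itself stresses in Remark \ref{rem:char2}. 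Second, the step you defer is precisely the hard part: the compatibility, in a two-dimensional local situation, of the twisted residues with the canonical transfers along the normalizations (a reciprocity statement for relative curves). For smooth $X$ this is the substance of Morel's Chapters 4--5, but Morel does not treat the homological complex of an arbitrary, possibly singular, finite type scheme, so the claim that ``all the pieces are worked out in \cite[Chapter 5]{Morel08}'' does not cover the first half of the statement --- which is exactly why the paper must invoke \cite{Deglise17} there. Without an independent proof of that twisted reciprocity in the general setting, your argument does not close.
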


\begin{proof}
If $X$ is smooth, this is \cite[Theorem 5.31]{Morel08}. In the general case, the proof can be found in \cite[\S 4.2.8]{Deglise17}.
\end{proof}

\begin{rem}\label{rem:char2}
The assumption that $k$ is of characteristic different from $2$ in the proof that $(\tilde{C}(X,j,L,a),d)$ is a complex is artificial, and will be removed in \cite{Calmes17b}. Further, one can relax the assumption on $X$ by only supposing that $X$ is essentially of finite type. 
\end{rem}

\begin{defn}
Let $X$ be a finite type scheme over $k$. We call $\tilde{C}(X,j,L,a)$ the \emph{homological Rost-Schmid complex twisted by $(L,a)$}. We write $\H_i(X,j,L,a)$ for the homology groups of $\tilde{C}(X,j,L,a)$. In case $X$ is smooth, we call $C(X,j,L,a)$ the \emph{cohomological Rost-Schmid complex twisted by $(L,a)$} and write $\H^i(X,j,L,a)$ for its cohomology groups. If $Y\subset X$ is a closed subset, we can also consider the subcomplexes of both complexes formed by points on $Y$. We respectively denote by $\tilde{C}_Y(X,j,L,a)$ and $C_Y(X,j,L,a)$ these complexes.
\end{defn}

If $X_{\mathrm{red}}$ denotes the reduction of $X$, then we can observe that $\tilde{C}(X,j,L,a)=\tilde{C}(X_{\mathrm{red}},j,L,a)$ by very definition. Further, we can also observe that if $i:Y\subset X$ is a closed subscheme, then we also have an identification
\[
\tilde{C}(Y,j,i^*(L,a))=\tilde{C}_Y(X,j,L,a).
\]
We finally come to the definition of Chow-Witt groups. 

\begin{defn}\label{defn:homoCW}
Let $X$ be a scheme (essentially of finite type over $k$, separated). For any $i\in\N$ and any graded line bundle $(L,a)$ over $X$, we set 
\[
\CHW_i(X,L,a)=\H_i(X,-i,L,a).
\]
We call this homology group the \emph{homological Chow-Witt group of $i$-dimensional cycles twisted by $(L,a)$}. For simplicity, we also sometimes simply say \emph{homological $i$-th Chow-Witt group} or \emph{Chow-Witt group}. If $(L,a)$ is trivial, we omit it from the notation.
\end{defn}

\begin{defn}\label{def:cohomoCW}
Let $X$ be an essentially smooth $k$-scheme. For any $i\in\N$ and any graded line bundle $(L,a)$ over $X$, we set 
\[
\CHW^i(X,L,a)=\H^i(X,i,L,a).
\]
We call this cohomology group the \emph{cohomological Chow-Witt group of $i$-codimension cycles twisted by $(L,a)$}. As above, we also sometimes simply say \emph{cohomological $i$-th Chow-Witt group} or \emph{Chow-Witt group} and we omit $(L,a)$ from the notation in case it is trivial. If $Y\subset X$ is a closed subset, we may also consider the group
\[
\CHW^i_Y(X,L,a)=\H^i(C_Y(X,i,L,a)).
\] 
and call it the \emph{cohomological Chow-Witt group of $i$-codimension cycles supported on $Y$ and twisted by $(L,a)$}, or simply \emph{Chow-Witt group supported on $Y$}. 
\end{defn}

\subsection{Basic properties}\label{sec:basicprop}

We now state some basic properties of the Chow-Witt groups. First note that $\CHW_i(X,L,a)=0$ and $\CHW^i(X,L,a)=0$ for $i>\mathrm{dim}(X)$ or $i<0$. If $X$ is smooth of (constant) dimension $d_X$, the above identification of complexes yields an isomorphism
\[
\CHW_i(X,D(\Omega_{X/k})^{-1}\otimes(L,a))\simeq \CHW^{d_X-i}(X,L,a).
\] 
Next, observe that we may replace Milnor-Witt $K$-theory by Milnor $K$-theory in the definition of the Rost-Schmid complex. In a precise manner, this amounts to consider the quotient of each term by $\eta$. Since the differential maps commute with $\eta$, it follows that we get a differential at the level of the quotient complex. By definition, it coincides with the differential defined in \cite[\S2, (2.1.0)]{Rost96} and consequently we obtain homomorphisms
\[
\CHW_i(X,L,a)\to \CH_i(X)
\] 
for any $i\in\N$ (the same applies for the cohomological versions). In general, this map is neither injective, nor surjective as shown by \cite[Corollary 11.8]{Fasel09d}. Next, suppose that $\iota:Y\subset X$ is a closed subset. We may see $Y$ as a scheme by endowing it with its reduced structure and we know that $\tilde{C}(Y,j,\iota^*(L,a))=\tilde{C}_Y(X,j,L,a)$. Let $U\subset X$ be the complement of $Y$ in $X$ with embedding $u:U\to X$. It is an easy exercise to show that we have an exact sequence of complexes
\[
0\to \tilde{C}(Y,j,\iota^*(L,a))\to \tilde{C}(X,j,L,a)\to \tilde{C}(U,j,u^*(L,a))\to 0
\]
and consequently a long exact sequence
\[
\ldots \to \CHW_i(Y,\iota^*(L,a))\to \CHW_i(X,L,a)\to \CHW_i(U,u^*(L,a))\to \ldots
\]
in homology. In contrast with the situation in the case of Chow groups, the right-hand arrow $ \CHW_i(X,L,a)\to \CHW_i(U,u^*(L,a))$ is in general not surjective (\cite{Fasel09d}). In case the scheme $X$ is smooth, and $\iota:Y\subset X$ is closed, we obtain the same sequence, with the distinction that $\CHW_i(Y,\iota^*(L,a))$ can be compared to the relevant cohomological group only if $Y$ is also smooth. Letting $r=d_X-d_Y$ be the (constant) codimension of $Y$ in $X$ and considering the subcomplex $C_Y(X,j,L,a)\subset C(X,j,L,a)$ of points supported on $Y$, we have identifications
\begin{eqnarray*}
C_Y(X,j,L,a) & = & \tilde{C}_Y(X,j-d_X,D(\Omega_{X/k})^{-1}\otimes (L,a)) \\
& = & \tilde{C}(Y,j-d_X,\iota^*D(\Omega_{X/k})^{-1}\otimes \iota^*(L,a)) \\
& = & C(Y,j+d_Y-d_X,D(\Omega_{Y/k})\otimes \iota^*D(\Omega_{X/k})^{-1}\otimes \iota^*(L,a))
\end{eqnarray*}
Thus, the above exact sequence reads
\[
\ldots \to \CHW^{i-r}(Y,D(\Omega_{Y/k})\otimes \iota^*D(\Omega_{X/k})^{-1}\otimes \iota^*(L,a))\to \CHW^i(X,L,a)\to \CHW^i(U,u^*(L,a))\to \ldots
\]
in this case. One may moreover use the canonical isomorphism (\ref{eqn:canonical1}) to identify $D(\Omega_{Y/k})\otimes \iota^*D(\Omega_{X/k})^{-1}$ with $D(\Omega_{X/Y})$.

\subsection{Push-forwards}

The purpose of this section is to show that given a proper morphism $f:X\to Y$ of finite type schemes over $k$ there exists a morphism of complexes
\[
f_*:\tilde{C}(X,j,f^*(L,a))\to \tilde{C}(Y,j,L,a)
\]
for any $j\in\Z$ and any graded line bundle $(L,a)$ on $Y$. The association $f\mapsto f_*$ is functorial (i.e. respects compositions and identities) and therefore homological Chow-Witt groups define a covariant functor from the category of finite type $k$-schemes (with projective morphisms) to the category of abelian groups. The definition of $f_*$ is quite straightforward, given the existence of transfer maps.  Indeed, let $x\in X$ and let $y=f(x)\in Y$ be such that $x\in X_{(i)}$ and $y\in Y_{(i^\prime)}$. If the field extension $k(y)\subset k(x)$ is infinite, then we define 
\[
(f_*)^x_y:\KMW_{j+i}(k(x),D(\Omega_{k(x)/k})\otimes (L,a))\to \KMW_{j+i^\prime}(k(y),D(\Omega_{k(y)/k})\otimes (L,a)) 
\]
to be trivial. If the extension $k(y)\subset k(x)$ is finite, then $i=i^\prime$ and we define 
\[
(f_*)^x_y:\KMW_{j+i}(k(x),D(\Omega_{k(x)/k})\otimes (L,a))\to \KMW_{j+i}(k(y),D(\Omega_{k(y)/k})\otimes (L,a)) 
\]
to be the canonical transfer $\mathrm{Tr}^{k(x)}_{k(y)}$ of Definition \ref{defn:canonicaltransfer}. Summing up these maps, we obtain a morphism of graded abelian groups 
\[
f_*:\tilde{C}(X,j,f^*(L,a))\to \tilde{C}(Y,j,L,a)
\]
this is evidently functorial since transfer maps are functorial. 

\begin{theorem}\label{thm:properpf}
Let $f:X\to Y$ be a morphism of $k$-schemes essentially of finite type. Then, 
\[
f_*:\tilde{C}(X,j,f^*(L,a))\to \tilde{C}(Y,j,L,a)
\]
is a morphism of complexes provided one of the following conditions hold:
\begin{enumerate}
\item $f$ is finite and both $X$ and $Y$ are essentially smooth.
\item $f$ is proper and $k$ is of characteristic different from $2$.
\end{enumerate}
\end{theorem}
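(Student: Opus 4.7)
The plan is to verify $d \circ f_* = f_* \circ d$ component by component. Fix $x \in X_{(i)}$, a codimension-one point $y$ of $Y$, and a class $\alpha$ supported at $x$; I must show that the $y$-component of $f_*(d\alpha)$ equals that of $d(f_*\alpha)$. Both expressions involve only points in the closures of $x$ and of $f(x)$, so by replacing $X$ with the normalization of $\overline{\{x\}}$ (and throwing away a codimension $\geq 2$ locus to make it smooth) and $Y$ with $\overline{\{f(x)\}}$, I may assume $X$ is integral and smooth of dimension $i$ with generic point $x$, and $f$ remains proper. There are then two cases according to whether $\dim f(X) = i$ or $\dim f(X) < i$.

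Assume first that $\dim f(X) = i$, so $y_0 := f(x)$ satisfies $[k(x):k(y_0)] < \infty$ and $f_*\alpha = \mathrm{Tr}^{k(x)}_{k(y_0)}(\alpha)$. Localizing $Y$ at a codimension-one point $y \in \overline{\{y_0\}}$, the base becomes one-dimensional, and properness of $f$ forces $X \times_Y \Spec(\mathcal{O}_{Y,y})$ to be proper over this DVR. After normalization (and removal of non-smooth points), one obtains a semi-local Dedekind scheme whose closed points $x_1,\dots,x_n$ are precisely the codimension-one points of $X$ mapping to $y$. The required equality then reduces to the curve-level reciprocity identity
\[
\partial_y \circ \mathrm{Tr}^{k(x)}_{k(y_0)}(\alpha) \;=\; \sum_{\ell=1}^n \mathrm{Tr}^{k(x_\ell)}_{k(y)} \circ \partial_{x_\ell}(\alpha),
\]
expressing the compatibility between twisted residue homomorphisms and canonical transfers along a finite cover of smooth curves over a DVR. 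The case $\dim f(X) < i$ is analogous: $f_*\alpha$ itself vanishes, and the vanishing of $f_*(d\alpha)$ follows from the same reciprocity applied to the (positive-dimensional, proper) fibers of $f$ over codimension-one points of $Y$.

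The main obstacle is this curve-level reciprocity. When $X$ and $Y$ are already smooth and $f$ is finite, no normalization is needed and the identity is exactly part of Morel's proof that the Rost-Schmid differential squares to zero \cite[Theorem 5.31]{Morel08}, yielding case (1). Under hypothesis (2), the normalization step may introduce non-smooth intermediate schemes, so one needs the same reciprocity in greater generality; this is carried out in \cite[\S 4.2.8]{Deglise17} and requires $\mathrm{char}\,k \neq 2$ for the reason noted in Remark \ref{rem:char2}. The remaining bookkeeping is to check that the graded line bundle twists on both sides of the reciprocity formula agree; this is a systematic application of the canonical isomorphism (\ref{eqn:canonical3}) to the conormal bundles of the $x_\ell$ in the normalized curve, and reduces to the commutativity of finitely many diagrams of graded line bundles in $\mathcal G$, all of which are induced by the standard exact sequences of the previous subsection.
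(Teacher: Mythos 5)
Your outline is sound and takes essentially the same route as the paper, which itself offers no argument beyond citing \cite[Corollary 5.30]{Morel08} for case (1) and \cite[\S 4.2.5, Theorem 4.2.7]{Deglise17} for case (2); your reduction to curve-level reciprocity over a discrete valuation ring (plus Weil reciprocity on the one-dimensional proper fibers when the dimension drops) is exactly the standard skeleton those references flesh out. One small caveat: even in case (1) the closure $\overline{\{x\}}$ need not be smooth, so the normalization step is not avoidable there either --- it is built into the definition of the differential and absorbed into Morel's result.
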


\begin{proof}
The result under the first assumption is proved in \cite[Corollary 5.30]{Morel08}. For the proof of the statement under the second assumption, we refer to \cite[\S 4.2.5, Theorem 4.2.7]{Deglise17}.
\end{proof}

\begin{rem}
As in Remark \ref{rem:char2}, the assumption that $k$ is of characteristic different from $2$ should be superfluous. We hope to lift it in \cite{Calmes17b}. 
\end{rem}

As an obvious consequence, we see that for any $i\in\N$, the homological Chow group $\CHW_i$ is a functor from the category of finite-type schemes (with only projective morphisms as morphisms) to the category of abelian groups. Moreover, this allows to define a refinement of the usual degree map on Chow groups. Suppose that $X$ is a projective $k$-scheme with structural morphism $p:X\to \Spec{k}$. We then have a push-forward map 
\[
p_*:\CHW_0(X)\to \CHW_0(k)=\KMW_0(k).
\]
and a commutative diagram
\[
\xymatrix{\CHW_0(X)\ar[r]^-{p_*}\ar[d] & \KMW_0(k)\ar[d] \\
\CH_0(X)\ar[r]_{\mathrm{deg}} & \ZZ}
\]
in which the vertical homomorphisms are the ones from Chow-Witt groups to Chow groups and the bottom horizontal one is the degree map. Both vertical homomorphisms are surjective and we see that $p_*$ is a refinement of the degree map. We denote it $\widetilde{\mathrm{deg}}$ below and call it the \emph{Milnor-Witt degree} of $X$. Virtually, all the questions that are interesting for $\mathrm{deg}$ are also interesting for $\widetilde{\mathrm{deg}}$. For instance, it is clear (by functoriality) that if $X$ has a rational point, then $\widetilde{\mathrm{deg}}$ is surjective.  The converse statement is also interesting: Suppose that $\widetilde{\mathrm{deg}}$ is surjective, then does $X$ have a rational point? Funnily, the answer is that $\widetilde{\mathrm{deg}}$ is no better than $\mathrm{deg}$ in this respect, i.e. it doesn't detect rational points. For more information on this story, we refer to \cite{Asok11b}. This question set apart, we may observe that the image of $\widetilde{\mathrm{deg}}$ is harder to compute than the image of the degree. Indeed, $\KMW_0(k)$ is not a PID and the only a priori structure of the image is that it is a $\KMW_0(k)$-submodule of $\KMW_0(k)$. We refer to \cite{Bhatwadekar14} for some computations in this direction, and we note that it would be interesting to compute some Milnor-Witt degrees of hypersurfaces in $\mathbb{P}^n$.

Finally, let's state the cohomological version of Theorem \ref{thm:properpf} which is obtained via identification of the cohomological and homological complexes.

\begin{theorem}
Let $f:X\to Y$ be a morphism of smooth $k$-schemes, and let $d=\mathrm{dim}(Y)-\mathrm{dim}(X)$. Then the morphism of degree $d$
\[
f_*:C(X,j,D(\omega_{X/k})\otimes f^*(L,a))\to C(Y,j+d,D(\Omega_{Y/k})\otimes (L,a))
\]
is a morphism of complexes provided one of the following conditions hold:
\begin{enumerate}
\item $f$ is finite.
\item $f$ is proper and $k$ is of characteristic different from $2$.
\end{enumerate}
\end{theorem}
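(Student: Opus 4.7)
The plan is to deduce the cohomological statement from the homological version (Theorem \ref{thm:properpf}) by unravelling the identification between $C(X,-,-,-)$ and $\tilde{C}(X,-,-,-)$ set up in Section~2.1. Recall that for any smooth $k$-scheme $Z$ of dimension $d_Z$ and any graded line bundle $(M,b)$ on $Z$, the canonical isomorphism \eqref{eqn:canonical3} applied at each point $x\in Z$ yields the identification
\[
C(Z,j,M,b)^{i} = \tilde{C}_{d_Z-i}\bigl(Z,\,j-d_Z,\,D(\Omega_{Z/k})^{-1}\otimes (M,b)\bigr),
\]
and this identification turns the differential of $C$ into the differential of $\tilde{C}$ with a shift.

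First I would apply this identification to the source and target of the cohomological $f_*$. For the source, with $(M,b) = D(\Omega_{X/k})\otimes f^*(L,a)$, the twist by $D(\Omega_{X/k})^{-1}$ cancels and we get
\[
C\bigl(X,\,j,\,D(\Omega_{X/k})\otimes f^*(L,a)\bigr)^{i} = \tilde{C}_{d_X-i}\bigl(X,\,j-d_X,\,f^*(L,a)\bigr).
\]
For the target at cohomological degree $i+d$, using $d = d_Y-d_X$,
\[
C\bigl(Y,\,j+d,\,D(\Omega_{Y/k})\otimes (L,a)\bigr)^{i+d}
= \tilde{C}_{d_Y-i-d}\bigl(Y,\,j+d-d_Y,\,(L,a)\bigr)
= \tilde{C}_{d_X-i}\bigl(Y,\,j-d_X,\,(L,a)\bigr).
\]

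Next I would verify that under these bijections the cohomological pushforward corresponds exactly to the homological pushforward attached to the datum $(Y,\,j-d_X,\,L,a)$. This is a point-by-point check: on a pair $(x,y)$ with $y=f(x)$, if $k(y)\subset k(x)$ is infinite both components vanish, while if it is finite the component $(f_*)^x_y$ is by definition the canonical transfer $\mathrm{Tr}^{k(x)}_{k(y)}$ of Definition \ref{defn:canonicaltransfer} on the relevant twisted Milnor-Witt group, and the identifications above merely relabel source and target graded line bundles via \eqref{eqn:canonical3} (no sign of the commutativity constraint in $\mathcal{G}$ intervenes, since both twists $D(\Omega_{X/k})^{-1}\otimes D(\Omega_{X/k})$ are unitored in the same order on both sides). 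Hence the cohomological $f_*$ in degree $i$ is literally the homological $f_*$ in homological degree $d_X-i$.

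Finally, the assertion that the cohomological $f_*$ is a morphism of complexes then reduces to the same assertion for the homological $f_*$, which is Theorem \ref{thm:properpf} applied to $(Y,\,j-d_X,\,L,a)$: under assumption (1) the morphism $f$ is automatically finite on both pictures, and under (2) the properness and the characteristic hypothesis pass through unchanged. The main obstacle in this proof is not conceptual but bureaucratic: one must chase the graded-line-bundle twists through \eqref{eqn:canonical1} and \eqref{eqn:canonical3} and verify that the permutation of the $D(\Omega_{?/k})^{-1}\otimes D(\Omega_{?/k})$ factors on source and target matches the convention used to define $f_*$ in the homological picture, so that no $\langle(-1)^{\cdots}\rangle$-sign is accidentally introduced.
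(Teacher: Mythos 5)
Your proposal is correct and is exactly the route the paper takes: the cohomological statement is obtained by the identification $C(Z,j,M,b)^{i}=\tilde{C}_{d_Z-i}(Z,j-d_Z,D(\Omega_{Z/k})^{-1}\otimes (M,b))$, under which the push-forward becomes the homological one of Theorem~\ref{thm:properpf} (in weight $j-d_X$), whose hypotheses match cases (1) and (2). Your index and twist bookkeeping checks out, so nothing further is needed.
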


\subsection{Flat pull-backs}

We now pass to the notion of pull-back for Chow-Witt groups. As in the case of Chow groups, the construction of the pull-backs is more delicate than the one for push-forwards. The subtle point here is that we need a notion of a ``quadratic" (or more precisely ``symmetric bilinear") length in order to define explicitly the pull-back for flat morphisms.  Our aim is then to construct a morphism of complexes
\[
f^*:C(X,j,L,a)\to C(Y,j,f^*(L,a))
\]
for any flat morphism $f:Y\to X$ between (essentially) smooth schemes and any graded line bundle $(L,a)$ over $X$. We start by observing that the basic bricks of the complex $C(X,j,L,a)$ are of the form $\KMW_j(k(x),D(\mathfrak m_x/\mathfrak m_x^2)^{-1}\otimes (L,a))$. We omit $(L,a)$ in the sequel, letting the reader making the necessary changes to keep track of this graded line bundle. The idea behind what follows is that the Milnor-Witt $K$-theory of $k(x)$ is deeply linked with Hermitian $K$-theory (aka higher Grothendieck-Witt groups) of the regular local ring $\OO_{X,x}$.

Let then $x\in X^{(i)}$ and consider the category $\OO_{X,x}^{\mathrm{fl}}$ of finite length $\OO_{X,x}$-modules, which is an abelian category. To any finite length module $M$, we can associate a finite length module $M^{\sharp}:=\mathrm{Ext}^i_{\OO_{X,x}}(M,\OO_{X,x})$, obtaining an exact functor
\[
^{\sharp}:(\OO_{X,x}^{\mathrm{fl}})^{\mathrm{op}}\to \OO_{X,x}^{\mathrm{fl}}
\]
Moreover, there is a canonical isomorphism $\varpi:\mathrm{id}\to ^{\sharp\sharp}$ (\cite[\S 6]{Balmer02}) turning $\OO_{X,x}^{\mathrm{fl}}$ into an exact category with duality in the sense of \cite[Definition 2.1]{Schlichting09}. To this category, we may then associate its Grothendieck-Witt group $\GW^{\mathrm{fl}}(\OO_{X,x})$ in the sense of \cite[\S 2.2]{Schlichting09}. Roughly speaking, this group is the Grothendieck group of the set of isometry classes of pairs $(M,\varphi)$ (where $M$ is a finite length module and $\varphi:M\to M^{\sharp}$ is a symmetric isomorphism) endowed with the orthogonal sum as operation, modulo an extra relation identifying a pair $(M,\varphi)$ with a totally isotropic submodule with an hyperbolic module. We now explain how to compute $\GW^{\mathrm{fl}}(\OO_{X,x})$. Let $\OO_{X,x}^0\subset \OO_{X,x}^{\mathrm{fl}}$ be the subcategory of semi-simple objects. It inherits the duality ${}^\sharp$ from the latter and is in turn an exact category with duality. The induced homomorphism between Grothendieck-Witt groups 
\[
\GW(\OO_{X,x}^0)\to \GW^{\mathrm{fl}}(\OO_{X,x})
\]
is actually an isomorphism by \cite[Theorem 6.10]{Quebbemann79}. We now identify the left-hand side with a more familiar object. Let $V$ be a (finite dimensional) $k(x)$-vector space. We claim that there is an isomorphism of $k(x)$-vector spaces
\[
\hom_{k(x)}(V,\mathrm{Ext}^i_{\OO_{X,x}}(k(x),\OO_{X,x}))\to \mathrm{Ext}^i_{\OO_{X,x}} (V,\OO_{X,x}).
\]
Indeed, one can choose a projective $\OO_{X,x}$-resolution of $k(x)$
\[
0\to P_{i}\to \ldots\to P_1\to P_0\to k(x)\to 0
\]
Dualizing (and setting $P_j^\vee=\hom_{\OO_{X,x}}(P,\OO_{X,x})$), we obtain a projective resolution
\[
0\to P_0^\vee\to P_1^\vee\to \ldots\to P_i^\vee\to \mathrm{Ext}^i_{\OO_{X,x}}(k(x),\OO_{X,x})\to 0
\]
We can pull-back this exact sequence along any homomorphism of $\OO_{X,x}$-modules $V\to \mathrm{Ext}^i_{\OO_{X,x}}(k(x),\OO_{X,x})$ and get a projective resolution of $V$ as an $\OO_{X,x}$-module, which is nothing else than an extension of $V$ by $P_0^\vee$. Now, it is clear that we can choose $P_0=\OO_{X,x}$ and we obtain a map
\[
\hom_{k(x)}(V,\mathrm{Ext}^i_{\OO_{X,x}}(k(x),\OO_{X,x}))\to \mathrm{Ext}^i_{\OO_{X,x}} (V,\OO_{X,x}).
\]
as required. We let the reader check that it is $k(x)$-linear, functorial in $V$ and that it respects direct sums of $k(x)$-vector spaces. Since it is obviously an isomorphism for $V=k(x)$, it is an isomorphism. In more sophisticated terms, the above homomorphism defines a duality preserving functor between the category of $k(x)$-vector spaces, endowed with the duality $\hom_{k(x)}(\_,\mathrm{Ext}^i_{\OO_{X,x}}(k(x),\OO_{X,x}))$ and the category $\OO_{X,x}^0$. This functor is an equivalence, and therefore we obtain an isomorphism of Grothendieck-Witt groups 
\[
\GW(k(x),\mathrm{Ext}^i_{\OO_{X,x}}(k(x),\OO_{X,x}))\simeq \GW(\OO_{X,x}^0).
\]
Finally, we can state the following proposition, which is the basis of the definition of flat pull-backs.

\begin{prop}\label{prop:localmult}
Let $X$ be an essentially smooth scheme and let $x\in X^{(i)}$. Then, we have a canonical isomorphism 
\[
\KMW_0(k(x),D(\mathfrak m_x/\mathfrak m_x^2)^{-1})\to \GW^{\mathrm{fl}}(\OO_{X,x}).
\] 
\end{prop}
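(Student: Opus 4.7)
The plan is to construct the asserted isomorphism as a composition of three natural identifications, building on the ingredients already set up in the discussion immediately preceding the statement.

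First, I will concatenate the two isomorphisms just established, namely
\[
\GW^{\mathrm{fl}}(\OO_{X,x})\;\xrightarrow{\sim}\;\GW(\OO_{X,x}^{0})\;\xrightarrow{\sim}\;\GW\bigl(k(x),\mathrm{Ext}^{i}_{\OO_{X,x}}(k(x),\OO_{X,x})\bigr),
\]
the first coming from Quebbemann's semi-simple d\'evissage (\cite[Theorem 6.10]{Quebbemann79}) and the second from the duality-preserving equivalence between $\OO_{X,x}^{0}$ and the category of finite dimensional $k(x)$-vector spaces with duality $\Hom_{k(x)}(-,\mathrm{Ext}^{i}_{\OO_{X,x}}(k(x),\OO_{X,x}))$ constructed in the text.

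The second step is the key canonical identification
\[
\mathrm{Ext}^{i}_{\OO_{X,x}}(k(x),\OO_{X,x})\simeq(\det\mathfrak m_x/\mathfrak m_x^{2})^{\vee},
\]
which is a Koszul computation. Since $X$ is essentially smooth and $x$ has codimension $i$, the ring $\OO_{X,x}$ is regular local of dimension $i$ and admits a regular system of parameters $\pi_1,\ldots,\pi_i$. I will use the associated Koszul resolution $K_\bullet\to k(x)$ with $K_j=\wedge^{j}\OO_{X,x}^{i}$ on formal generators $e_1,\ldots,e_i$, apply $\Hom_{\OO_{X,x}}(-,\OO_{X,x})$, and send the class of $e_1^{\vee}\wedge\cdots\wedge e_i^{\vee}$ in $\mathrm{Ext}^{i}_{\OO_{X,x}}(k(x),\OO_{X,x})$ to the dual $(\overline{\pi_1}\wedge\cdots\wedge\overline{\pi_i})^{\vee}$ of the corresponding basis element of $\det(\mathfrak m_x/\mathfrak m_x^{2})$. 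The main thing to verify, and the principal obstacle in the argument, is independence from the choice of parameters: a change $\pi_j'=\sum_k a_{jk}\pi_k$ with $(a_{jk})\in\mathrm{GL}_i(\OO_{X,x})$ rescales the Koszul top term $e_1\wedge\cdots\wedge e_i$ by $\det(a_{jk})$, hence its dual, and the corresponding generator of $\mathrm{Ext}^i$, by $\det(\overline{a_{jk}})^{-1}\in k(x)^{\times}$; the same scalar governs the transformation of $(\overline{\pi_1}\wedge\cdots\wedge\overline{\pi_i})^{\vee}$, so the assignment descends to a canonical isomorphism of $k(x)$-vector spaces.

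Third, I will extend Lemma~\ref{lem:degree0} to the twisted setting: for any line bundle $L$ over $k(x)$, tensoring the untwisted isomorphism $\GW(k(x))\simeq\KMW_0(k(x))$ with $\ZZ[L^{0}]$ over $\ZZ[k(x)^{\times}]$ yields a canonical isomorphism $\GW(k(x),L)\simeq\KMW_0(k(x),L)$, compatible with the respective module structures. Applying this to $L=(\det\mathfrak m_x/\mathfrak m_x^{2})^{\vee}$, which is precisely the underlying line bundle of $D(\mathfrak m_x/\mathfrak m_x^{2})^{-1}\in\mathcal G(k(x))$, and composing with the identifications of the previous two steps produces the desired canonical isomorphism
\[
\KMW_0\bigl(k(x),D(\mathfrak m_x/\mathfrak m_x^{2})^{-1}\bigr)\simeq\GW^{\mathrm{fl}}(\OO_{X,x}).
\]
Apart from the Koszul canonicity check, every ingredient has been prepared in the preceding subsections, so the argument is largely an assembly.
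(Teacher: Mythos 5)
Your proposal is correct and follows essentially the same route as the paper: assemble Quebbemann's d\'evissage and the duality-preserving equivalence to get $\GW^{\mathrm{fl}}(\OO_{X,x})\simeq\GW(k(x),\mathrm{Ext}^i_{\OO_{X,x}}(k(x),\OO_{X,x}))$, identify $\mathrm{Ext}^i_{\OO_{X,x}}(k(x),\OO_{X,x})$ with the dual of $\det(\mathfrak m_x/\mathfrak m_x^2)$, and invoke a twisted form of Lemma~\ref{lem:degree0}. The only cosmetic differences are that the paper cites Bourbaki for the Koszul identification where you carry out the change-of-parameters check by hand, and that it writes out the explicit map $\langle u\rangle\otimes l\mapsto\varphi_{ul}$ realizing the twisted version of Lemma~\ref{lem:degree0}, which you obtain by tensoring.
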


\begin{proof}
First, we know from Lemma \ref{lem:degree0} that we have an isomorphism 
\[
\KMW_0(k(x))\to \GW(k(x))
\]
such that $\langle u\rangle \mapsto \langle u\rangle$ for any $u\in k(x)^\times$. For any $k(x)$-vector space $L$ of dimension $1$, we define a map
\[
\KMW_0(k(x))\otimes_{\ZZ[k(x)^\times]}\ZZ[L^0]\to \GW(k(x),L)
\]
by $\langle u\rangle \otimes l\mapsto \varphi_{ul}$, where $\varphi_l:k(x)\times k(x)\to L$ is defined by $\varphi_l(x,y)=xyl$. We let the reader check (for instance, using the isomorphisms between untwisted and twisted groups induced by the choice of $l$) that this map is in fact an isomorphism of abelian groups. Consequently, we have a sequence of isomorphisms
\[
\KMW_0(k(x),a,\mathrm{Ext}^i_{\OO_{X,x}}(k(x),\OO_{X,x}))\simeq \GW(k(x),\mathrm{Ext}^i_{\OO_{X,x}}(k(x),\OO_{X,x}))\simeq \GW^{\mathrm{fl}}(\OO_{X,x})
\]
for any integer $a\in \ZZ$. The claim then follows using the canonical isomorphism $(\mathfrak m_x/\mathfrak m_x^2)^*\simeq \mathrm{Ext}^i_{\OO_{X,x}}(k(x),\OO_{X,x})$ deduced from \cite[A X.165, Proposition 9]{Bourbaki80} and setting $a=-1$.
\end{proof}

One of the interesting features about Grothendieck-Witt groups of finite length modules is that they are functorial for flat morphisms between rings of the same Krull dimension. Indeed, let $A,B$ be regular local rings of the same Krull dimension $d$, and let $f:A\to B$ be a flat homomorphism of local rings. We have a functor $f^*:A^{\mathrm{fl}}\to B^{\mathrm{fl}}$ between the respective categories of finite length modules given by $M\mapsto B\otimes_A M$. For any projective $A$-module $P$, we have isomorphisms
\[
B\otimes \mathrm{Hom}_A(P,A)\simeq \mathrm{Hom}_B(B\otimes P,B)
\]
and it follows that we have an isomorphism $B\otimes_A\mathrm{Ext}^d_A(M,A)\simeq \mathrm{Ext}^d_B(B\otimes_A M,B)$ which is functorial in $M$. Consequently, we obtain a homomorphism 
\[
f^*:\GW^{\mathrm{fl}}(A)\to \GW^{\mathrm{fl}}(B)
\]
which is actually functorial. 

Let now $X$ and $Y$ be essentially smooth schemes and let $f:Y\to X$ be a flat morphism. Let $x\in X^{(i)}$ and let $y\in Y^{(i)}$ be such that $f(y)=x$ (in fact, $y$ is a minimal point of the fiber of $x$). The induced homomorphism
\[
f:\OO_{X,x}\to \OO_{Y,y}
\]
is then a flat homomorphism of regular local rings and we get an induced homomorphism $f^*$ on the respective Grothendieck-Witt groups of finite length modules. In view of Proposition \ref{prop:localmult}, we obtain a homomorphism $m(x,y,f)$ of the form
\[
\KMW_0(k(x),D(\mathfrak m_x/\mathfrak m_x^2)^{-1})\simeq \GW^{\mathrm{fl}}(\OO_{X,x})\stackrel {f^*}\to \GW^{\mathrm{fl}}(\OO_{Y,y})\simeq \KMW_0(k(y),D(\mathfrak m_y/\mathfrak m_y^2)^{-1})
\]
which we call \emph{local multiplicity along $f$}. This terminology can be justified as follows. If $R$ is a (noetherian) regular local ring with maximal ideal $\mathfrak m$, then one can consider its completion $\hat R$ with respect to the $\mathfrak m$-adic valuation. This is still a regular local ring with the same residue field $\kappa(m)$ as $R$ and the extension $R\to \hat R$ is flat. As a consequence of Proposition \ref{prop:localmult}, we see that the induced map
\[
\GW^{\mathrm{fl}}(R)\to \GW^{\mathrm{fl}}(\hat R)
\]
is an isomorphism. We may use the Cohen structure theorem to find a presentation $\hat R=\kappa(x)[[x_1,\ldots,x_d]]$ and finally we see that what we are looking at is in fact the group $\GW^{\mathrm{fl}}(\kappa(x)[[t_1,\ldots,t_d]])$. A flat homomorphism as above then induces a homomorphism of the form
\[
\GW^{\mathrm{fl}}(k(x)[[t_1,\ldots,t_i]])\to \GW^{\mathrm{fl}}(k(y)[[t_1,\ldots,t_i]])
\]
which is in some sense the multiplication by the (motivic) Brouwer degree of the homomorphism $k(x)[[t_1,\ldots,t_i]]\to k(y)[[t_1,\ldots,t_i]]$.

The local multiplicity along $f$ allows us to define homomorphisms 
\[
(f^*)_x^y:\KMW_{j-i}(k(x),D(\mathfrak m_x/\mathfrak m_x^2)^{-1})\to \KMW_{j-i}(k(y),D(\mathfrak m_y/\mathfrak m_y^2)^{-1})
\]
for any $j\in\ZZ$ as follows. Let $\iota:\KMW_{j-i}(k(x))\to \KMW_{j-i}(k(y))$ be the homomorphism induced by the field extension. We set 
\[
(f^*)_x^y(\alpha\otimes \overline x_1^*\wedge\ldots\wedge \overline x_i^*)=\iota(\alpha)\cdot m(x,y,f)(\langle 1\rangle\otimes x_1^*\wedge\ldots\wedge \overline x_i^*)
\]
for any generators $\overline x_1,\ldots,\overline x_i$ of $\mathfrak m_x/\mathfrak m_x^2$. Of course, we may further twist by any graded line bundle $(L,a)$ and we leave as usual the details to the reader.

\begin{ex}\label{ex:smoothpb}
Suppose that $f:Y\to X$ is smooth and that $x\in X^{(i)}$ and $y\in Y^{(i)}$ are such that $f(y)=x$. In that case, $f$ induces an isomorphism $\overline f:(\mathfrak m_x/\mathfrak m_x^2)\otimes k(y)\to \mathfrak m_y/\mathfrak m_y^2$ and the local multiplicity $m(x,y,f)$ is easily seen to satisfy 
\[
m(x,y,f)(\langle 1\rangle\otimes \overline x_1^*\wedge\ldots\wedge \overline x_i^*)=\langle 1\rangle\otimes \overline f(\overline x_1)^*\wedge\ldots\wedge  \overline f(\overline x_i)^*
\]
showing that in that case $(f^*)_x^y$ is essentially determined  by $\iota:\KMW_{j-i}(k(x))\to \KMW_{j-i}(k(y))$.
\end{ex}

All in all, we see that if $f:Y\to X$ is a flat morphism, we can define a pull-back homomorphism
\[
f^*:C(X,j,L,a)\to C(Y,j,f^*(L,a))
\]
by summing up the homomorphisms $(f^*)_x^y$. The association $f\mapsto f^*$ is functorial.

\begin{theorem}
Let $f:Y\to X$ be a flat morphism of essentially smooth $k$-schemes and let $(L,a)$ be a graded line bundle over $X$. Then, 
\[
f^*:C(X,j,L,a)\to C(Y,j,f^*(L,a))
\]
is a morphism of complexes provided one of the following conditions hold:
\begin{enumerate}
\item $f$ is smooth.
\item $k$ is of characteristic different from $2$.
\end{enumerate}
\end{theorem}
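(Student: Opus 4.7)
The plan is to verify $d \circ f^* = f^* \circ d$ summand by summand. First I would fix $x \in X^{(i)}$ and $z \in Y^{(i+1)}$ and compare the $z$-component of $d(f^*(\alpha))$ to that of $f^*(d(\alpha))$ for $\alpha \in \KMW_{j-i}(k(x), D(\mathfrak{m}_x/\mathfrak{m}_x^2)^{-1} \otimes (L,a))$. Two cases arise: if $f(z) = x' \in X^{(i+1)}$ lies in the closure of $x$, both sides have contributions and must match; otherwise $f^*(d(\alpha))$ has no $z$-component and I need to show that $\sum_y d_z^y\bigl((f^*)_x^y(\alpha)\bigr) = 0$, where $y$ runs over points of $Y^{(i)}$ lying over $x$ whose closure contains $z$. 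In either case the problem becomes local around $z$ and $f(z)$, so one may assume $f$ is a flat local homomorphism of regular local rings of equal Krull dimension.

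For the smooth case (1), I would localize further so that $f$ factors as a composite of an \'etale morphism and a projection $\A^1_X \to X$. For an \'etale factor, Example \ref{ex:smoothpb} identifies the local multiplicity with the canonical isomorphism on cotangent data, so $(f^*)_x^y$ reduces to the scalar extension $\iota$; compatibility with residues then follows from the generating symbol description in Theorem \ref{thm:residue} applied to unramified extensions of discrete valuations, and compatibility with canonical transfers is the \'etale functoriality mentioned after Definition \ref{defn:canonicaltransfer}. For a projection $\A^1_X \to X$, the pullback embeds $\KMW_*(k(x))$ as constant symbols in $\KMW_*(k(x)(t))$, and the commutation with the differential is essentially built into the splitting statement of Proposition \ref{prop:twistedhomotopy}. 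Assembling these yields case (1), which is \cite[Corollary 5.30]{Morel08}.

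For the general flat case (2), my plan is to exploit that $\OO_{X,x} \to \widehat{\OO}_{X,x}$ induces an isomorphism on $\GW^{\mathrm{fl}}$ (via Proposition \ref{prop:localmult} together with a limit argument), which allows one to replace $f$ with a flat homomorphism of power series rings $k(x)[[t_1,\ldots,t_i]] \to k(y)[[t_1,\ldots,t_i]]$ using the Cohen structure theorem. The task then becomes a controlled computation comparing the motivic Brouwer-type multiplicity $m(x,y,f)$ with residues along monic irreducible polynomials in the $t_j$ and with Scharlau-type transfers. This is Rost's cycle-module style argument, carried out for Milnor-Witt $K$-theory in \cite[\S 4.2]{Deglise17}.

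The main obstacle is case (2): one needs the quadratic length encoded in $m(x,y,f)$ to behave multiplicatively along towers of flat homomorphisms and to commute correctly with the duality data underlying the canonical transfer maps of Definition \ref{defn:canonicaltransfer}. The hypothesis of characteristic different from $2$ enters precisely at this point, and as Remark \ref{rem:char2} indicates, this restriction is expected to be removable in \cite{Calmes17b}.
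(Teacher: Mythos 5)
The paper itself gives no argument here: it disposes of case (1) by citing \cite[Lemma 5.16]{Morel08} and of case (2) by citing \cite[Corollaire 10.4.2]{Fasel08a}. Your sketch of case (1) is consistent with what Morel actually does (local factorization of a smooth morphism through an \'etale map followed by an affine-space projection, with the split exact sequence of Proposition \ref{prop:twistedhomotopy} handling the projection), though the relevant citation is Lemma 5.16 rather than Corollary 5.30 (the latter is the finite push-forward statement), and the \'etale step needs more than the functoriality of transfers in towers recorded after Definition \ref{defn:canonicaltransfer}: it needs the base-change compatibility $\mathrm{res}\circ\mathrm{Tr}=\sum\mathrm{Tr}\circ\mathrm{res}$ for a finite extension tensored with a field extension (Rost's axiom R1c), which is a separate, nontrivial input from \cite{Morel08}.

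The real concern is case (2). The step ``replace $f$ with a flat homomorphism of power series rings via the Cohen structure theorem'' does not reduce the identity $d\circ f^{*}=f^{*}\circ d$ to a statement about power series rings: the differential $d^{y}_{z}$ is built from the normalization of $\overline{\{y\}}$ inside the actual scheme $Y$ and from canonical transfers between the actual residue fields, none of which survive completion, and the isomorphism $\GW^{\mathrm{fl}}(R)\simeq\GW^{\mathrm{fl}}(\hat R)$ discussed around Proposition \ref{prop:localmult} only serves to identify the local multiplicity $m(x,y,f)$, not the residues and transfers it must commute with. The route the paper actually cites, \cite[Corollaire 10.4.2]{Fasel08a}, is genuinely different: it deduces flat functoriality from the functoriality of the Gersten--Witt complex of Balmer--Walter together with the corresponding statement for Milnor $K$-theory from \cite{Rost96}, and that is where the hypothesis that $k$ has characteristic different from $2$ actually enters (the derived Witt group machinery requires $2$ to be invertible), rather than at the multiplicativity of the quadratic length as you suggest. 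Your fallback citation of \cite[\S 4.2]{Deglise17} is a legitimate alternative reference, but as written the completion argument is a gap, not a proof.
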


\begin{proof}
If $f$ is smooth, then this is \cite[Lemma 5.16]{Morel08}. If $f$ is flat, then the proof goes back to \cite[Corollaire 10.4.2]{Fasel08a}.
\end{proof}

\begin{rem}
As for push-forwards, the assumption on the base field is irrelevant and should be dropped in the near future.
\end{rem}

As a consequence, we see that the association $X\mapsto \CHW^i(X)$ induces a functor from the category of (essentially) smooth $k$-schemes with flat morphisms to the category of abelian groups. Interestingly, we can also define pull-backs for homological Chow-Witt groups, provided the morphisms are smooth. Let then $f:Y\to X$ be a smooth morphism between schemes which are essentially of finite type. For simplicity, we suppose that the relative dimension $d:=d_Y-d_X$ is constant (for instance $X,Y$ are connected). Let $x\in X_{(i)}$ and let $y\in Y_{(d+i)}$ be such that $f(x)=y$. In that case, we have a commutative diagram of exact sequences
\[
\xymatrix{
0\ar[r] & (\mathfrak m_x/\mathfrak m_x^2)\otimes k(y)\ar[r]\ar[d] & \Omega_{X/k}\otimes k(y)\ar[r]\ar[d] & \Omega_{k(x)/k}\otimes k(y)\ar[r]\ar[d] & 0 \\
0\ar[r] & \mathfrak m_y/\mathfrak m_y^2\ar[r] & \Omega_{Y/k}\otimes k(y)\ar[r] & \Omega_{k(y)/k}\ar[r] & 0}
\]
in which the vertical arrows are induced by $f$. Since $f$ is smooth, the left-hand vertical arrow is an isomorphism and the other two arrows are injective. The cokernel of the middle arrow is $\Omega_{Y/X}\otimes k(y)$ and we obtain an exact sequence
\[
0\to \Omega_{k(x)/k}\otimes k(y)\to \Omega_{k(y)/k}\to \Omega_{Y/X}\otimes k(y)\to 0 
\] 
and a canonical isomorphism $D(\Omega_{k(y)/k})\simeq D(\Omega_{k(x)/k}\otimes k(y))\otimes D(\Omega_{Y/X}\otimes k(y))$. Using the homomorphism $\iota:\KMW_{j+i}(k(x))\to \KMW_{j+i}(k(y))$, we can then define
\[
\KMW_{j+i}(k(x),D(\Omega_{k(x)/k}))\to \KMW_{j+i}(k(y),D(\Omega_{k(y)/k})\otimes D(\Omega_{Y/X}\otimes k(y))^{-1}).
\]
Summing up these homomorphisms, we obtain a homomorphism (of degree $d$)
\[
f^*:\tilde{C}(X,j,L,a)\to \tilde{C}(Y,j-d,D(\Omega_{Y/X})^{-1}\otimes f^*(L,a))
\]
which coincides with the pull-back for cohomological Chow-Witt groups in case $X$ and $Y$ are smooth. We then have the following result (with the usual remark about the characteristic of the base field).

\begin{theorem}
Let $f:Y\to X$ be a smooth morphism of $k$-schemes essentially of finite type over $k$, and let $(L,a)$ be a graded line bundle over $X$. Then, the pull-back homomorphism (of degree $d=d_Y-d_X$)
\[
f^*:\tilde{C}(X,j,L,a)\to \tilde{C}(Y,j-d,D(\Omega_{Y/X})^{-1}\otimes f^*(L,a))
\]
is a morphism of complexes if either of the following two conditions holds:
\begin{enumerate}
\item both schemes are essentially smooth.
\item $k$ is of characteristic different from $2$.
\end{enumerate}
\end{theorem}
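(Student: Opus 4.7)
The goal is to show that $f^*$ commutes with the differentials. The approach is to reduce Case (2) to Case (1), and to handle Case (1) by translating to the cohomological complexes and invoking the previous theorem on flat pull-backs.

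\textbf{Case (1).} Assume $X$ and $Y$ are essentially smooth. Using the canonical isomorphism (\ref{eqn:canonical3}), one has the identification
\[
\tilde{C}_i(X, j, L, a) = C(X, j + d_X, D(\Omega_{X/k})\otimes (L, a))^{d_X - i},
\]
and likewise for $Y$. Writing $d_Y - d = d_X$ and using the isomorphism (\ref{eqn:canonical1}) in the form $D(\Omega_{Y/k}) \otimes D(\Omega_{Y/X})^{-1} \simeq f^*D(\Omega_{X/k})$, one gets
\[
\tilde{C}_{d+i}(Y, j-d, D(\Omega_{Y/X})^{-1}\otimes f^*(L, a)) = C(Y, j + d_X, f^*(D(\Omega_{X/k})\otimes (L,a)))^{d_X - i}.
\]
Both sides are thus identified with (shifts of) cohomological complexes twisted respectively by $M := D(\Omega_{X/k})\otimes (L, a)$ on $X$ and its pull-back $f^*M$ on $Y$. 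One then verifies that under these identifications the homological pull-back $f^*$ corresponds, component-by-component, to the cohomological pull-back along the smooth morphism $f$. On a single summand this reduces to comparing the twist by $D(\Omega_{Y/X})^{-1}$ with the local multiplicity $m(x,y,f)$ of the cohomological pull-back, which is precisely the content of Example \ref{ex:smoothpb}: for smooth $f$ both data are determined by the canonical isomorphism $\overline f: (\mathfrak m_x/\mathfrak m_x^2)\otimes k(y) \to \mathfrak m_y/\mathfrak m_y^2$. The preceding theorem on cohomological pull-backs along smooth morphisms of essentially smooth schemes then concludes.

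\textbf{Case (2).} Assume $k$ has characteristic $\neq 2$, so that both $\tilde{C}$-complexes are actual complexes by the earlier theorem. The identity $f^*d = df^*$ is checked on each summand; fix $x \in X_{(i+1)}$ and a class $\alpha$ supported at $x$. Both $f^*d\alpha$ and $df^*\alpha$ are supported on points of dimension $d+i-1$ in $Y$, all lying inside $f^{-1}(\overline{\{x\}})$. By the very construction of $d$ and of $f^*$, the contributions depend only on the closures $\overline{\{x\}}\subset X$ and $f^{-1}(\overline{\{x\}})\subset Y$ after normalization, up to points of codimension $\geq 2$ which do not contribute to the fixed summand. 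Restricting to open subsets of these normalizations where they are smooth over $k$ (which is permitted since $k$ is perfect), the morphism $f$ remains smooth between essentially smooth schemes, and Case (1) applies.

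\textbf{Main obstacle.} The most delicate step is the verification, in Case (1), that under the chain of canonical identifications coming from (\ref{eqn:canonical1}) and (\ref{eqn:canonical3}) the homological $f^*$ agrees, summand by summand, with the cohomological $f^*$. This requires matching the explicit formula for the homological pull-back, which is defined via the field inclusion $\iota$ and the short exact sequence $0 \to \Omega_{k(x)/k}\otimes k(y)\to \Omega_{k(y)/k}\to \Omega_{Y/X}\otimes k(y)\to 0$, with the Grothendieck-Witt local multiplicity $m(x,y,f)$ entering the definition of the cohomological pull-back, and demands careful bookkeeping of signs in the Picard category $\mathcal G$. The reduction step in Case (2) is also not entirely formal: one must argue that restricting to smooth opens of the relevant normalizations does not lose information about the specific differentials and pull-back components under consideration.
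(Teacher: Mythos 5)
Your Case (1) is essentially the paper's own argument: the homological pull-back was set up so as to coincide, under the identifications coming from (\ref{eqn:canonical1}) and (\ref{eqn:canonical3}), with the cohomological pull-back when both schemes are essentially smooth, and the statement then follows from the preceding theorem on pull-backs of the cohomological complexes (the case of a smooth $f$, cf.\ Example \ref{ex:smoothpb}). That part is fine, though note a small indexing slip later: for $x\in X_{(i+1)}$ both $f^*d\alpha$ and $df^*\alpha$ live on points of dimension $d+i$, not $d+i-1$.

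Case (2) is where there is a genuine gap. The paper does not reduce the non-smooth case to the smooth one; it invokes the machinery of \cite[\S 4.2.4]{Deglise17} (Milnor--Witt cycle modules / Borel--Moore motivic spectra), and for good reason. Your reduction discards ``points of codimension $\geq 2$'', but the problematic points are of codimension $1$: the singular locus of $\overline{\{x\}}$ can contain points $y\in X_{(i)}$ (think of a nodal curve inside $X$), and these points do contribute to the summand of the differential you are checking. They cannot be removed by shrinking to a smooth open subset, and passing to the normalization $Z$ of $\overline{\{x\}}$ changes the complex: the component $d^x_y$ is by definition $\sum_{z\mapsto y}\mathrm{Tr}^{k(z)}_{k(y)}\circ d^x_z$, a residue on $Z$ followed by a canonical transfer along the finite extensions $k(z)/k(y)$. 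Consequently, the identity $f^*d=df^*$ at such a point $y$ is not an instance of Case (1); it requires commuting the smooth pull-back past the canonical transfers, i.e.\ a base-change formula for the finite morphism $Z\to X$ against the smooth morphism $f$, keeping track of all points of $Y$ over $y$ and of $Z\times_X Y$ over $z$, together with the appropriate multiplicities and twists (and possible inseparability phenomena). This compatibility of transfers with pull-backs is precisely the hard content of the theorem in the non-smooth case and is nowhere established in your argument; your closing remark that the reduction ``is not entirely formal'' understates this — without it the proof of Case (2) is missing. A further warning sign is that your reduction never uses the characteristic hypothesis beyond knowing that the $\tilde{C}$'s are complexes, whereas the only known proofs of Case (2) at the time of writing do.
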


\begin{proof}
The first case is an immediate consequence of the previous theorem. For the second case, see \cite[\S 4.2.4]{Deglise17}.
\end{proof}

\begin{theorem}[Homotopy invariance]
Let $X$ be a scheme which is essentially of finite type over $k$ and let $p:X\times \A^1\to X$ be the projection. Suppose that either $k$ is of characteristic different from $2$ or that $X$ is smooth. Then, the homomorphism
\[
p^*:\tilde{C}(X,j,L,a)\to \tilde{C}(X\times \A^1,j-1,D(\Omega_{X\times \A^1/X})^{-1}\otimes p^*(L,a))
\]
is a quasi-isomorphism for any graded line bundle $(L,a)$ over $X$.
\end{theorem}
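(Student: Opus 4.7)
The plan is to reduce the statement to the case where $X$ is the spectrum of a finitely generated field extension $F$ of $k$. In that case $X \times \A^1 = \A^1_F$, the target complex becomes (after chasing the canonical isomorphisms of Section 1.4) the two-term complex of Proposition \ref{prop:twistedhomotopy}, and $p^*$ coincides with the split injection studied there. The reduction is carried out by a coniveau-type filtration argument.

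To set this up, I filter both complexes by codimension of the image of the support in $X$. On $\tilde{C}(X, j, L, a)$, let $G^n$ be the subcomplex of summands indexed by points $x \in X$ with $\mathrm{codim}_X(x) \geq n$; on $\tilde{C}(X\times \A^1, j-1, D(\Omega_{X\times\A^1/X})^{-1}\otimes p^*(L,a))$, let $G^n$ be the subcomplex of summands indexed by pairs $(x,z)$ with $\mathrm{codim}_X(x) \geq n$. These are subcomplexes, since a specialization in $X \times \A^1$ projects to a specialization in $X$, which can only increase codimension. The pullback $p^*$ respects the filtrations, because by construction a summand indexed by $x$ is sent to summands indexed by $(x,z)$ with $z$ in the fibre $\A^1_{k(x)}$. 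On the associated graded $G^n/G^{n+1}$, the map becomes, for each $x \in X^{(n)}$, the inclusion of the one-term complex $\KMW_{j+d_X-n}(k(x), D(\Omega_{k(x)/k})\otimes (L,a))$ (in homological degree $d_X - n$) into the two-term complex $\KMW_{j+d_X-n}(k(x)(t), \ldots) \to \bigoplus_p \KMW_{j+d_X-n-1}(k(x)(p),\ldots)$ (in degrees $d_X-n+1$ and $d_X-n$), further twisted by $(L,a)$. By Proposition \ref{prop:twistedhomotopy} this map is a quasi-isomorphism for every $x$.

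Since the filtration $G^\bullet$ is bounded ($G^0$ is the whole complex and $G^{d_X+1} = 0$), the spectral sequence of the filtered complex converges, and a comparison of the two spectral sequences (equivalently, a finite induction using the long exact homology sequences associated to $0 \to G^{n+1} \to G^n \to G^n/G^{n+1} \to 0$ and the five lemma) shows that $p^*$ is a quasi-isomorphism globally. The main obstacle will be the careful bookkeeping of twists on the associated graded pieces: one must verify, using the functoriality of the isomorphism (\ref{eqn:canonical1}) for the tower $k \subset k(x) \subset k(x)(t)$ and the isomorphism (\ref{eqn:canonical3}) for the closed immersion of each closed point of $\A^1_{k(x)}$, that the twist $D(\Omega_{X\times\A^1/X})^{-1}\otimes p^*(L,a)$ on $G^n/G^{n+1}$ reduces, fibre by fibre over $x \in X^{(n)}$, to the twist $D(\Omega_{k(x)/k})\otimes (L,a)$ appearing in Proposition \ref{prop:twistedhomotopy}.
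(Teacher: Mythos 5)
Your proposal is correct and is essentially the argument the paper points to: the proof of \cite[Satz 6.1.1]{Schmid98} invoked there is exactly this filtration of $\tilde{C}(X\times\A^1,\ldots)$ by the (co)dimension in $X$ of the image of the support, with the associated graded pieces identified fibrewise with the split exact sequence of Proposition \ref{prop:twistedhomotopy}. The only points needing care are the ones you flag yourself, namely the identification of the induced differential on the graded pieces with the total residue of that proposition and the bookkeeping of twists via (\ref{eqn:canonical1}) and (\ref{eqn:canonical3}); both are routine.
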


\begin{proof}
We observe that in the first case the extra assumption is only to make sure that the Rost-Schmid complexes are indeed complexes (which we know without extra conditions if $X$ is smooth).  In any case, the proof is a verbatim of \cite[Satz 6.1.1]{Schmid98} with input the exact sequences of Proposition \ref{prop:twistedhomotopy}.
\end{proof}

As a corollary of this theorem and of the existence of exact sequences of localization discussed in Section \ref{sec:basicprop}, we obtain the following result. 

\begin{cor}\label{cor:homotopyinv}
Let $p:Y\to X$ be an affine bundle over a scheme $X$ essentially of finite type over $k$ and let $r=d_Y-d_X$. Then the pull-back homomorphism
\[
p^*:\tilde{C}(X,j,L,a)\to \tilde{C}(Y,j-r,D(\Omega_{Y/X})^{-1}\otimes p^*(L,a))
\]
is a quasi-isomorphism for any graded line bundle $(L,a)$ over $X$.
\end{cor}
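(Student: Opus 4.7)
The plan is to combine the Homotopy Invariance Theorem with Noetherian induction on $X$ and the localization exact sequences from Section \ref{sec:basicprop}.

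First, handle the \emph{trivial case} $Y = X\times\A^r$ with $p$ the projection. Factor $p$ as the composite
\[
X\times\A^r \xrightarrow{p_r} X\times\A^{r-1} \xrightarrow{p_{r-1}} \cdots \xrightarrow{p_1} X,
\]
where each $p_i$ is a trivial $\A^1$-bundle over $X\times\A^{i-1}$. The Homotopy Invariance Theorem gives that each $p_i^*$ is a quasi-isomorphism, and composing them (and identifying the iterated twists by $D(\Omega_{\_/\_})^{-1}$ using the canonical isomorphism (\ref{eqn:canonical1}) for compositions of smooth morphisms) yields the quasi-isomorphism $p^*$ in this case.

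For the general case, proceed by Noetherian induction on $X$: the statement is vacuous when $X = \emptyset$, and we assume it holds for every proper closed subscheme of $X$. Since an affine bundle is by definition Zariski-locally trivial, pick a non-empty open $u:U\hookrightarrow X$ over which $p$ trivializes, with closed complement $\iota:Z\hookrightarrow X$. Set $U' := p^{-1}(U)$, $Z' := p^{-1}(Z)$, and let $p_U:U'\to U$, $p_Z:Z'\to Z$ be the induced affine bundles of rank $r$. Then $p_U$ is a trivial affine bundle (covered by the trivial case), while $p_Z$ is an affine bundle over the proper closed subscheme $Z\subsetneq X$ (covered by the inductive hypothesis). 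Set $\mathcal T := D(\Omega_{Y/X})^{-1}\otimes p^*(L,a)$ and consider the commutative diagram of short exact sequences of complexes
\[
\xymatrix{
0 \ar[r] & \tilde{C}_Z(X,j,L,a) \ar[r]\ar[d]_-{p_Z^*} & \tilde{C}(X,j,L,a) \ar[r]\ar[d]_-{p^*} & \tilde{C}(U,j,u^*(L,a)) \ar[r]\ar[d]_-{p_U^*} & 0 \\
0 \ar[r] & \tilde{C}_{Z'}(Y,j-r,\mathcal T) \ar[r] & \tilde{C}(Y,j-r,\mathcal T) \ar[r] & \tilde{C}(U',j-r,\mathcal T|_{U'}) \ar[r] & 0
}
\]
obtained from the localization sequences of Section \ref{sec:basicprop}, together with the identifications $\tilde{C}_Z(X,j,L,a) = \tilde{C}(Z,j,\iota^*(L,a))$ and similarly for $Z'$. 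The two outer vertical pull-backs are quasi-isomorphisms by what precedes, so applying the five lemma to the resulting long exact sequences in homology yields that $p^*$ is a quasi-isomorphism as well.

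The main technical obstacle is the naturality bookkeeping: one must check that $p^*$ is compatible with the localization short exact sequences (this follows from the local, flat construction of the pull-back, together with the compatibility of canonical transfers with closed immersions), and that the graded line bundles in the bottom row identify correctly with $D(\Omega_{Z'/Z})^{-1}\otimes p_Z^*(\iota^*(L,a))$ and $D(\Omega_{U'/U})^{-1}\otimes p_U^*(u^*(L,a))$ via the canonical isomorphisms (\ref{eqn:canonical1})--(\ref{eqn:canonical3}). Once this compatibility has been recorded, the five-lemma argument above is formal, and the corollary follows.
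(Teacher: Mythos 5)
Your proof is correct and follows essentially the route the paper intends: the trivial case $X\times\A^r$ by iterating the homotopy invariance theorem, and the general case by Noetherian induction using the localization exact sequences of Section \ref{sec:basicprop} together with the five lemma (with the expected bookkeeping of twists via the canonical isomorphisms and the compatibility of $p^*$ with localization).
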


\subsection{Euler classes}

Let $X$ be a scheme (essentially) of finite type over $k$, and let $p:E\to X$ be a vector bundle of rank $r$. Let $s:X\to E$ be the zero section of $E$. For any graded line bundle $(L,a)$ over $X$, we have two morphisms of complexes (the latter being of degree $r$)
\[
s_*:\tilde C(X,j,s^*(D(\Omega_{E/X})^{-1})\otimes (L,a))\to \tilde C(E,j,D(\Omega_{E/X})^{-1}\otimes p^*(L,a))
\]
and 
\[
p^*:\tilde C(X,j+r,L,a)\to \tilde C(E,j,D(\Omega_{E/X})^{-1}\otimes p^*(L,a))
\]
with $\Omega_{E/X}\simeq p^*E^\vee$.
This induces a homomorphism for any $i\in \N$
\[
e(E):\H_i(X,j,D(E)\otimes (L,a))\to \H_{i-r}(X,j+r,L,a)
\]
that we call \emph{the (homological) Euler homomorphism}. Supposing that $X$ is smooth, we may replace $(L,a)$ by the graded line bundle $D(E^\vee)\otimes D(\Omega_{X/k})^{-1}\otimes (L,a)$ to get a homomorphism
\[
e(E):\H_i(X,j,D(\Omega_{X/k})^{-1}\otimes (L,a))\to \H_{i-r}(X,j+r,D(E^\vee)\otimes D(\Omega_{X/k})^{-1}\otimes (L,a))
\]
In view of the identification $\tilde{C}_i(X,j,D(\Omega_{X/k})^{-1})=C(X,j+d_X)^{d_X-i}$, this translates into a homomorphism
\[
e(E):\H^i(X,j,L,a)\to \H^{i+r}(X,j+r,D(E^\vee)\otimes (L,a)).
\]
called \emph{the (cohomological) Euler homomorphism}. Taking $i=j=0$, we can consider $e(E)(\langle 1\rangle)\in \H^{r}(X,r,D(E^\vee))=\CHW^r(X,D(E^\vee))$. This is the \emph{Euler class} of $E$. As a consequence of the projection formula we'll see later, the cohomological Euler homomorphism is the multiplication by the Euler class. 
\section{Products and general pull-backs}

\subsection{Exterior product}

In this section, we show that the cohomological Chow-Witt groups admit exterior products and pull-backs with respect to arbitrary morphisms of (essentially) smooth schemes. Let then $X,Y$ be essentially smooth schemes and let $x\in X^{(i)}$ and $y\in Y^{(i^\prime)}$. The base field being perfect, the product $k(x)\otimes_kk(y)$ is actually a product of fields, say $k(x)\otimes_kk(y)=k(u_1)\times \ldots \times k(u_n)$ for $u_1,\ldots,u_n\in (X\times Y)^{(i+i^\prime)}$. Writing $X_x:=\Spec(\OO_{X,x})$ and $Y_y:=\Spec(\OO_{Y,y})$, we get a Cartesian square
\[
\xymatrix{\Spec (k(x)\otimes k(y))\ar[r]\ar[d] & X_x\times \Spec (k(y))\ar[d] \\
\Spec(k(x))\times Y_y\ar[r] & X_x\times Y_y}
\]
The morphisms in the diagram are all complete intersection, and we then obtain a decomposition (\cite[Appendix B.7.4]{Fulton98})
\[
\left((\mathfrak m_x/\mathfrak m_x^2)^*\otimes k(u)\right)\oplus \left((\mathfrak m_y/\mathfrak m_y^2)^*\otimes k(u)\right)=(\mathfrak m_{u}/\mathfrak m_{u}^2)^*
\]  
for each $u=u_1,\ldots,u_n$. Writing $i_{x,u}:k(x)\to k(u)$ and $i_{y,u}:k(y)\to k(u)$ for the field extensions, we then obtain a canonical isomorphism
\begin{equation}\label{eqn:sumofcones}
i_{x,u}^*D(\mathfrak m_x/\mathfrak m_x^2)^{-1}\otimes i_{y,u}^*D(\mathfrak m_y/\mathfrak m_y^2)^{-1}\simeq D(\mathfrak m_u/\mathfrak m_u^2)^{-1}.
\end{equation}

\begin{defn}
Let $x\in X^{(i)}$, $y\in Y^{(i^\prime)}$ and $u\in (X\times Y)^{(i+i^\prime)}$ be as above. We write $\mu(x,y;u):$
\[
\KMW_*(k(x),D(\mathfrak m_x/\mathfrak m_x^2)^{-1})\times \KMW_*(k(y),D(\mathfrak m_y/\mathfrak m_y^2)^{-1})\to \KMW_*(k(u),D(\mathfrak m_u/\mathfrak m_u^2)^{-1})
\]
for the composite of the product of twisted Milnor-Witt $K$-theory groups and the isomorphism induced by (\ref{eqn:sumofcones}).
\end{defn}

Explicitly, let $\alpha\in \KMW_*(k(x))$, $\overline x_1,\ldots,\overline x_i$ be generators of $\mathfrak m_x/\mathfrak m_x^2$, $\beta\in \KMW_*(k(y))$ and $\overline y_1,\ldots,\overline y_{i^\prime}$ be generators of $\mathfrak m_y/\mathfrak m_y^2$, then
\[
\mu(x,y;u)(\alpha\otimes \overline x_1^*\wedge\ldots\wedge\overline x_i^*,\beta\otimes \overline y_1^*\wedge\ldots\wedge\overline y_{i^\prime}^*)=\alpha\cdot\beta\otimes \overline x_1^*\wedge\ldots\wedge\overline x_i^*\wedge\overline y_1^*\wedge\ldots\wedge\overline y_{i^\prime}^*.
\]

We let the reader prove the following result, which will imply the graded commutativity of the exterior product on Chow-Witt groups.

\begin{lem}\label{lem:gradedcomm}
Let $r,s\in \ZZ$. The following diagram, in which the left-hand map is the switch, 
\[
\xymatrix{\KMW_r(k(x),D(\mathfrak m_x/\mathfrak m_x^2)^{-1})\otimes \KMW_s(k(y),D(\mathfrak m_y/\mathfrak m_y^2)^{-1})\ar[r]^-{\mu(x,y;u)}\ar[d] & \KMW_{r+s}(k(u),D(\mathfrak m_u/\mathfrak m_u^2)^{-1})\ar[d]^-{\epsilon^{rs}\langle(-1)^{ii^\prime}\rangle} \\
\KMW_s(k(y),D(\mathfrak m_y/\mathfrak m_y^2)^{-1})\otimes \KMW_r(k(x),D(\mathfrak m_x/\mathfrak m_x^2)^{-1})\ar[r]_-{\mu(y,x;u)} & \KMW_{r+s}(k(u),D(\mathfrak m_u/\mathfrak m_u^2)^{-1})}
\]
is commutative.
\end{lem}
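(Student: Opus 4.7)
The plan is to evaluate both routes around the square on a representative pair
\[
(\alpha \otimes \overline x_1^*\wedge\cdots\wedge\overline x_i^*,\ \beta \otimes \overline y_1^*\wedge\cdots\wedge\overline y_{i'}^*)
\]
with $\alpha \in \KMW_r(k(x))$ and $\beta \in \KMW_s(k(y))$, using the explicit formula for $\mu$ recorded immediately after its definition. Going through the top-right corner yields
\[
\epsilon^{rs}\langle (-1)^{ii'}\rangle\cdot\alpha\beta \otimes \overline x_1^*\wedge\cdots\wedge\overline x_i^*\wedge\overline y_1^*\wedge\cdots\wedge\overline y_{i'}^*,
\]
where the wedge lives in $D(\mathfrak m_u/\mathfrak m_u^2)^{-1}$ through the isomorphism (\ref{eqn:sumofcones}) with the $x$-factor on the left. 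Going through the bottom-left corner (switch, then apply $\mu(y,x;u)$) yields
\[
\beta\alpha \otimes \overline y_1^*\wedge\cdots\wedge\overline y_{i'}^*\wedge\overline x_1^*\wedge\cdots\wedge\overline x_i^*,
\]
interpreted via the companion instance of (\ref{eqn:sumofcones}) with the $y$-factor on the left.

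I would then compare these two expressions via two independent inputs. First, graded commutativity of untwisted Milnor-Witt $K$-theory (Lemma \ref{lem:elementary}(2)) gives $\beta\alpha = \epsilon^{rs}\alpha\beta$, which accounts for the factor $\epsilon^{rs}$. Second, the two identifications of $D(\mathfrak m_u/\mathfrak m_u^2)^{-1}$ produced by (\ref{eqn:sumofcones}) are intertwined by the commutativity constraint of $\mathcal G(k(u))$ applied to the graded line bundles of degrees $-i$ and $-i'$, which by definition is the usual switch scaled by $(-1)^{(-i)(-i')} = (-1)^{ii'}$. Pushing this scalar through the $\ZZ[k(u)^\times]$-tensor product that defines twisted Milnor-Witt $K$-theory absorbs it into the Milnor-Witt coefficient as $\langle (-1)^{ii'}\rangle \in \KMW_0(k(u))$, providing the remaining factor of the diagonal map.

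The only real subtlety is that final conversion, in which a scalar in $k(u)^\times$ coming from the Koszul-type sign in $\mathcal G$ must be identified with the symbol $\langle -\rangle$ acting through the $\ZZ[k(u)^\times]$-module structure. This is simply an instance of the general $\langle (-1)^{aa'}\rangle\epsilon^{mn}$-commutativity of the pairing on twisted Milnor-Witt $K$-theory already recorded in the text just before the statement of the lemma, so once it is invoked the equality of the two routes is forced and the diagram commutes.
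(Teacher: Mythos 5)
Your argument is correct and is exactly the intended one: the paper leaves this lemma to the reader, having already recorded the key mechanism (the $\langle(-1)^{aa^\prime}\rangle\epsilon^{mn}$-commutativity of the twisted pairing) just before the statement, and your computation on decomposable elements — $\epsilon^{rs}$ from graded commutativity of $\KMW_*$ plus the Koszul sign $(-1)^{ii^\prime}$ from reordering the wedge generators, absorbed as $\langle(-1)^{ii^\prime}\rangle$ through the $\ZZ[k(u)^\times]$-module structure — is precisely that argument made explicit.
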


We may assemble the homomorphisms $\mu(x,y;u)$ to obtain a homomorphism of graded abelian groups
\[
\mu:C(X,j,L,a)\times C(Y,j^\prime,L^\prime,a^\prime)\to C(X\times Y,j+j^\prime,(L,a)\otimes (L^\prime,a^\prime)).
\]
We also often denote $\mu$ by $\times$. Note also the extra subtlety induced by the presence of graded line bundles.  Indeed, in that case, one has to use the canonical isomorphism of graded line bundles over $k(u)$
\[
D(\mathfrak m_x/\mathfrak m_x^2)^{-1}\otimes (L,a)\otimes D(\mathfrak m_y/\mathfrak m_y^2)^{-1}\otimes (L^\prime,a^\prime)\to D(\mathfrak m_x/\mathfrak m_x^2)^{-1}\otimes D(\mathfrak m_y/\mathfrak m_y^2)^{-1}\otimes (L,a)\otimes (L^\prime,a^\prime)
\] 
and proceed as above. This being said, we have the following result (written as for now only in characteristic different from $2$) whose proof can be found in \cite[Corollary 4.12]{Fasel07}.

\begin{prop}
Let $X$ and $Y$ be essentially smooth schemes over $k$. Then, the homomorphism
\[
\mu:C(X,j,L,a)\times C(Y,j^\prime,L^\prime,a^\prime)\to C(X\times Y,j+j^\prime,(L,a)\otimes (L^\prime,a^\prime)).
\]
induces a well-defined product on cohomology provided the base field is of characteristic different from $2$.
\end{prop}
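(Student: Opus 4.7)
The plan is to verify a Leibniz-type identity for the pairing $\mu$ with respect to the Rost-Schmid differentials; from such an identity it is immediate that $\mu$ descends to a well-defined bilinear map on cohomology. Concretely, I would fix $\alpha \in C(X,j,L,a)^i$ supported at a single $x\in X^{(i)}$ and $\beta \in C(Y,j',L',a')^{i'}$ supported at a single $y \in Y^{(i')}$ and unpack $d\mu(\alpha,\beta)$. Perfectness of $k$ ensures that $k(x)\otimes_k k(y)\simeq \prod_{\ell=1}^n k(u_\ell)$ with each $u_\ell \in (X\times Y)^{(i+i')}$, so $\mu(\alpha,\beta)$ is supported at the points $u_\ell$ through the isomorphism \eqref{eqn:sumofcones}.

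Next, I would classify the codimension-$1$ points $u'$ in the closure of some $u_\ell$ that can contribute to $d\mu(\alpha,\beta)$. Since $\overline{\{u_\ell\}}$ sits inside $\overline{\{x\}}\times\overline{\{y\}}$, a transcendence-degree count shows that every such $u'$ is either \emph{horizontal} --- dominating a codimension-$1$ specialisation $x'$ of $x$ in $X$ while still dominating the generic point of $\overline{\{y\}}$ --- or \emph{vertical}, the symmetric situation. In the horizontal case I would identify $\partial^{u_\ell}_{u'}\mu(\alpha,\beta)$ with the component at $u'$ of $\mu(\partial^x_{x'}\alpha,\beta)$; this reduces to the compatibility of the twisted residue \eqref{eqn:twistedresidue} with the flat base change provided by the Cartesian square used to construct $\mu$, together with the local-multiplicity description recorded in Proposition \ref{prop:localmult}. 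The vertical case is entirely symmetric and yields a contribution matching $\mu(\alpha,\partial^y_{y'}\beta)$.

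The main obstacle is the sign bookkeeping. To produce a vertical residue one must commute the dual uniformiser $\overline\pi^*$ past the wedge factors $\overline x_1^* \wedge \cdots \wedge \overline x_i^*$ inside the canonical identification \eqref{eqn:sumofcones}, introducing a $(-1)^i$; combined with the graded $(\epsilon,\langle(-1)^{ii'}\rangle)$-commutativity of Lemma \ref{lem:gradedcomm} and the sign conventions induced by the graded line bundles, these contributions assemble to give a Leibniz identity of the expected shape
\[
d\mu(\alpha,\beta) \;=\; \mu(d\alpha,\beta) \;\pm\; \mu(\alpha,d\beta)
\]
at the level of complexes, with an explicit sign depending on $i$ and the twist degrees $a,a'$. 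Passage to cohomology then yields the desired product.

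The characteristic $\neq 2$ hypothesis enters at exactly the same technical point as in the proof of the fibre-product theorem following Lemma \ref{lem:forgetful}: the twisted residue of Theorem \ref{thm:residue} is pinned down only up to the graded-commutativity relations in Milnor-Witt $K$-theory, and the resulting sign analysis through \eqref{eqn:sumofcones} becomes unambiguous precisely when $2$ is invertible, so that the cleanest verification of the Leibniz identity proceeds via the fibre-product description of $\KMW$. As noted in Remark \ref{rem:char2}, this hypothesis should ultimately be superfluous.
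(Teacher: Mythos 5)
Your overall strategy---establish a Leibniz identity for $\mu$ against the Rost--Schmid differentials and deduce the product on cohomology---is the right one, and it is indeed the content of the Leibniz formula stated right after the proposition. But your route is genuinely different from the one the paper relies on. The paper cites \cite[Corollary 4.12]{Fasel07}, where the Leibniz formula is \emph{not} verified directly on the Rost--Schmid complex: it is assembled from two already-known Leibniz formulas, one for the Gersten--Witt complex and one for the complex in Milnor $K$-theory, glued through the Cartesian square (\ref{eqn:fiberproduct}) identifying $\KMW_n$ with the fibre product of $\KM_n$ and $\I^n$ over ${\overline\I}^n$. This is exactly why the characteristic $\neq 2$ hypothesis appears: the Cartesian square theorem (which uses Voevodsky's resolution of the Milnor conjecture) and the Balmer--Walter machinery for Witt groups are only available away from characteristic $2$. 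Your closing explanation of the hypothesis is therefore off the mark: the twisted residue (\ref{eqn:twistedresidue}) is canonically defined in any characteristic and there is no sign ambiguity to resolve by inverting $2$; a genuinely direct verification such as the one you sketch is precisely what removes the hypothesis (this is the content of Remark \ref{rem:char2} and the forthcoming characteristic-free treatment).

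The more substantive gap in your direct verification is that you treat the differential as if it were a pure residue map. It is not: $d^x_{x'}$ is defined by passing to the normalization $Z$ of $\overline{\{x\}}$, taking twisted residues at all points $z$ of $Z$ over $x'$, and then applying the canonical transfer $\mathrm{Tr}^{k(z)}_{k(x')}$ of Definition \ref{defn:canonicaltransfer}. Your horizontal/vertical dichotomy for the codimension-one specializations $u'$ of the $u_\ell$ is correct (the transcendence-degree count does rule out a point specializing both $x$ and $y$), but matching $d^{u_\ell}_{u'}\mu(\alpha,\beta)$ with $\mu(d^x_{x'}\alpha,\beta)$ requires, besides the multiplicativity of residues, a projection-formula-type compatibility of the canonical transfers with the exterior product, applied after comparing the normalizations of $\overline{\{u_\ell\}}$ and of $\overline{\{x\}}\times\overline{\{y\}}$. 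This transfer compatibility is the hardest step of the direct argument and cannot be subsumed under Proposition \ref{prop:localmult}, which concerns flat pull-backs of finite-length modules and plays no role here. Without it the Leibniz identity is not established.
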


The above proposition is the result of putting together two Leibniz-type formulas (one for Witt groups, the other for $\KM$-cohomology). We refer the reader to \cite{Calmes17b} for the following more precise result (without any extra assumption on the base field).

\begin{prop}[Leibniz formula]
Let $\alpha\in C^i(X,j,L,a)$ and $\beta\in C^{i^\prime}(Y,j^\prime,L^\prime,a^\prime)$. Then
\[
d^{i+i^\prime}_{X\times Y}(\alpha\times\beta)=(d^i_X(\alpha)\times \beta)+(-1)^i\epsilon^j\langle (-1)^a\rangle (\alpha\times d^{i^\prime}_Y(\beta)).
\]
\end{prop}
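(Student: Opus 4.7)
The plan is to verify the formula pointwise on supports. Fix $x\in X^{(i)}$ and $y\in Y^{(i^\prime)}$, let $u_1,\ldots,u_n$ be the points of $\Spec(k(x)\otimes_k k(y))$ (all of which lie in $(X\times Y)^{(i+i^\prime)}$), and take simple generators $\alpha\otimes\overline{x}_1^*\wedge\cdots\wedge\overline{x}_i^*$ and $\beta\otimes\overline{y}_1^*\wedge\cdots\wedge\overline{y}_{i^\prime}^*$ for the $x$- and $y$-components of $\alpha$ and $\beta$. By definition of $\mu$, the product $\alpha\times\beta$ is supported on $\{u_1,\ldots,u_n\}$, and $d^{i+i^\prime}_{X\times Y}(\alpha\times\beta)$ is a sum over codimension-one specializations $w$ of the $u_l$'s in $X\times Y$, each summand being a twisted residue followed by a canonical transfer along the normalization of $\overline{\{u_l\}}$.

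The geometric input is that every such $w$ is of one of two types. Computing $\trdeg_k k(w)$ via the projections $p\colon X\times Y\to X$ and $q\colon X\times Y\to Y$, and using that $u_l$ is a minimal prime of $k(x)\otimes_k k(y)$, one shows that either $q(w)=y$ and $p(w)$ is a codimension-one specialization $x^\prime$ of $x$ in $X$ (the horizontal case, where $w$ is a point of $\Spec(k(x^\prime)\otimes_k k(y))$), or $p(w)=x$ and $q(w)$ is a codimension-one specialization $y^\prime$ of $y$ in $Y$ (the vertical case); simultaneous specializations contribute only in codimension $\geq i+i^\prime+2$ and do not appear. Correspondingly, the differential splits into a horizontal summand to be identified with $(d^i_X\alpha)\times\beta$, and a vertical summand to be matched with a sign times $\alpha\times d^{i^\prime}_Y\beta$.

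For the horizontal summand the required matching is a compatibility of the twisted residue (\ref{eqn:twistedresidue}) and of the canonical transfer with the exterior product $\mu$. It breaks into two steps: the residue at $w$ (taken with respect to any uniformizer pulled back from $X$) commutes with multiplication by the $\beta$-factor by $\KMW_0$-linearity, and the canonical transfer along the normalization of $\overline{\{u_l\}}\to X\times Y$ agrees with the canonical transfer along the normalization of $\overline{\{x\}}\to X$ tensored with the identity on the $y$-slot. Both steps reduce, via the uniqueness property of Lemma \ref{lem:uniquegeom}, to formal universal identities. The vertical summand is treated symmetrically, with the roles of $X$ and $Y$ swapped.

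The main obstacle is the sign in front of the vertical summand. Because the residue is placed on the $\beta$-slot, one must interchange the $\alpha$-factor (and its wedge of cotangent-dual generators) past both $d_Y\beta$ and the new cotangent direction it produces. Three independent graded commutativities enter: the $\epsilon$-commutativity of Milnor-Witt $K$-theory (Lemma \ref{lem:elementary}), the graded-line-bundle commutativity in $\mathcal G$ encoded in Lemma \ref{lem:gradedcomm}, and the $(-1)^i$ Koszul sign from reordering $\overline{x}_1^*\wedge\cdots\wedge\overline{x}_i^*$ past one new cotangent generator. In the universal case where $\alpha$ and $\beta$ reduce to single symbols, a direct calculation combining these three with the relation $\epsilon=-\langle -1\rangle$ produces exactly $(-1)^i\epsilon^j\langle(-1)^a\rangle$; the general statement then follows by $\KMW_0$-bilinearity. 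Organizing this bookkeeping cleanly is the heart of the detailed proof carried out in \cite{Calmes17b}.
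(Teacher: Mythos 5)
Your overall strategy --- a direct, Rost-style pointwise verification --- is reasonable, and your geometric decomposition of the codimension-one specializations of the points $u_l$ into a horizontal and a vertical part is correct (the dimension count ruling out simultaneous specialization is exactly right). Note, however, that the paper itself gives no proof of this proposition: it defers entirely to \cite{Calmes17b}, and indicates that in characteristic different from $2$ the result is assembled from the two known Leibniz-type formulas, one for the Gersten--Witt complex and one for the complex in Milnor $K$-theory, via the fiber-product description of $\KMW_*$. Your route is therefore different from the one the paper gestures at, and, as written, it has genuine gaps at precisely the points where the real work lies.

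First, the claim that the compatibility of the canonical transfer with the exterior product ``reduces, via the uniqueness property of Lemma \ref{lem:uniquegeom}, to formal universal identities'' is not justified. What is actually needed is a projection formula for $\mathrm{Tr}$ (multiplication by elements coming from the base field passes through the transfer, with the correct twist and sign), together with a comparison between the normalization of $\overline{\{u_l\}}$ in $X\times Y$ and the base change of the normalization of $\overline{\{x\}}$ in $X$: there may be several points over $w$, and both differentials are defined through these normalizations, so identifying the horizontal sum with $(d_X\alpha)\times\beta$ is not a purely field-theoretic statement. Moreover, to invoke the uniqueness lemma one must check that the candidate map fits into the defining split exact sequence, and that check uses exactly the residue--product compatibilities you are in the middle of establishing, so the reduction is circular as stated. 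Second, the sign $(-1)^i\epsilon^j\langle(-1)^a\rangle$ --- which is the actual content of the statement --- is asserted (``a direct calculation \dots produces exactly \dots'') but never carried out; a naive combination of the three commutation rules you list produces factors such as $\epsilon^{j-i}$ and $\langle(-1)^i\rangle$, and converting these into the stated sign requires the careful bookkeeping that you, like the paper, ultimately defer to \cite{Calmes17b}. As it stands, the proposal is a plausible outline of the direct approach rather than a proof.
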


\subsection{Cartier divisors}

In this section, we explain how to pull-back along a (principal) Cartier divisor, starting with a few definitions and basic facts on the latter following \cite[Appendix B.4]{Fulton98}. Let $X$ be an integral scheme and let $C=\{U_i,f_i\}$ be a Cartier divisor on $X$. Here $U_i\subset X$ is an open subscheme, and the functions $f_i\in k(U_i)=k(X)$ are such that for each $i,j$ we have $f_i/f_j\in \OO_{U_i\cap U_j}^\times$. The support $\vert C\vert$ of $C$ is the closed subset of $X$ for which a local equation of $C$ is not invertible. To $C$, we can associate a line bundle $\OO(C)$ which is defined as the subsheaf of $k(X)$ generated on $U_i$ by $f_i^{-1}$. Recall finally that $C$ is said to be effective if local equations are regular. An effective Cartier divisor always has a canonical section $s_C$, which corresponds to $1\in k(X)$. The vanishing locus of the canonical section is precisely $\vert C\vert$.

Our first goal is to associate to a Cartier divisor an explicit canonical class in the group $\CHW^1_{\vert C\vert}(X,D(\OO(C))^{-1})$. We require here $X$ to be smooth (and integral). Let $x\in X^{(1)}$. There exists $i$ such that $x\in U_i$ and we can consider the residue homomorphism
\[
\partial_x:\KMW_1(k(X),D(\OO(C))^{-1})\to \KMW_0(k(x),D(\mathfrak m_x/\mathfrak m_x^2)^{-1}\otimes D(\OO(C))^{-1}).
\]
We set $\widetilde{\mathrm{ord}}_x(C):=\partial_x([f_i]\otimes f_i)$ and observe that this definition depends a priori on the choice of $i$ such that $x\in U_i$ (we have identified $k(X)$ with $k(U_i)$ on the left-hand side). However, the following computation shows that $\widetilde{\mathrm{ord}}_x(C)$ is actually independent of such a choice. Indeed, suppose that $x\in U_i\cap U_j$. Then
\[
[f_i]\otimes f_i=[(f_i/f_j)\cdot f_j]\otimes (f_i/f_j)\cdot f_j=\left([f_i/f_j]+\langle f_i/f_j\rangle[f_j]\right)\otimes (f_i/f_j)\cdot f_j=[f_i/f_j]\otimes f_i+[f_j]\otimes f_j
\]
and we can use the fact that $f_i/f_j$ is invertible to see that the residue of both $[f_i]\otimes f_i$ and $f_j\otimes f_j$ are the same. This leads to the following definition.

\begin{defn}
We set 
\[
\widetilde{\mathrm{ord}}(C)=\sum_{x\in X^{(1)}\cap \vert C\vert } \widetilde{\mathrm{ord}}_x(C) \in C(X,1,D(\OO(C))^{-1}).
\]
\end{defn}

A simple computation shows that in fact $\widetilde{\mathrm{ord}}(C)$ is a cocycle as could be seen by repeating the proof of \cite[Lemma 2.1.3]{Fasel16b} (and noting that the assumption on the characteristic of $k$ is not used there). If now $\alpha\in C(X,j,L,a)^i$, the question is to know if one can define an element $C\cdot \alpha\in C_{\vert C\vert}(X,j,D(\OO(C))^{-1}\otimes (L,a))^{i+1}$ which coincides with $\widetilde{\mathrm{ord}}(C)$ in case $\alpha=\langle 1\rangle \in C(X,0)^0$. It is possible to do it in fairly general situations, but only for cocycles as explained in \cite{Calmes17b}.

We now pass to the definition of a partial pull-back which will be needed in the sequel. For $f\in\OO(X)$, we denote by $V(f)$ the zero locus of $f$, by $U(f)$ its complement and by $\iota:U(f)\to X$ the inclusion. We sometimes simply write $U$ and $V$ when the context is clear. We can also observe that the codomain of the differential
\[
d^i:C(U(f),j,\iota^*(L,a))^{i}\to C(X,j,L,a)^{i+1}
\]
can be split into $C(X,j,L,a)^{i+1}=C(U(f),j,\iota^*(L,a))^{i+1}\oplus C_{V(f)}(X,j,L,a)^{i+1}$. Consequently, we can write $d^i=d^i_U+d(f)^i$, where $d^i_U$ is the differential in the Rost-Schmid complex of $U$ and $d(f)^i$ is the component supported on $V(f)$. We define a homomorphism
\[
\mu^i_f:C(U(f),j,\iota^*(L,a))^{i}\to C_{V(f)}(X,j+1,D(f)^{-1}\otimes (L,a))^{i+1}
\]
as follows, where $D(f)$ is the Cartier divisor associated to $f$. 

Let $x\in U(f)^{(i)}$ and let $\alpha^\prime=\alpha\otimes (\overline x_1^*\wedge \ldots \wedge \overline x_i^*)\otimes l \in \KMW_{j-i}(k(x),D(\mathfrak m_x/\mathfrak m_x^2)^{-1}\otimes (L,a))$. We set 
\[
\mu^i_f( \alpha^\prime)=d(f)^i(\langle (-1)^i\rangle[f]\alpha\otimes (\overline x_1^*\wedge \ldots \wedge \overline x_i^*)\otimes f\otimes l).
\]
\begin{lem}
The homomorphisms 
\[
\mu^i_f:C(U(f),j,\iota^*(L,a))^{i}\to C_{V(f)}(X,j+1,D(f)^{-1}\otimes (L,a))^{i+1}
\]
induce a morphism of complexes (of degree $1$)
\[
\mu_f:C(U(f),j,\iota^*(L,a))\to C_{V(f)}(X,j+1,D(f)^{-1}\otimes (L,a)).
\]
\end{lem}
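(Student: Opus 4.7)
The plan is to realize $\mu_f$ as the ``$V(f)$-supported part'' of the Rost-Schmid differential on a suitable ambient complex, so that the chain-map property will follow from $d^2=0$ on that complex. Set $C:=C(X,j+1,D(f)^{-1}\otimes (L,a))$, and for a local section
$\alpha':=\alpha\otimes(\overline x_1^*\wedge\cdots\wedge \overline x_i^*)\otimes l\in C^i(U(f),j,\iota^*(L,a))$ introduce the lift
\[
\tilde\alpha:=\langle(-1)^i\rangle[f]\alpha\otimes(\overline x_1^*\wedge\cdots\wedge \overline x_i^*)\otimes f\otimes l\in C^i.
\]
Since $f$ is invertible on $U(f)$, $\tilde\alpha$ is supported on $U(f)^{(i)}$, hence lies in $C^i(U(f),j+1,D(f)^{-1}\otimes \iota^*(L,a))\subset C^i$. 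On this subgroup, the differential of $C$ splits as $d^i_X=d^i_U+d(f)^i$ along the decomposition $C^{i+1}=C^{i+1}(U(f))\oplus C^{i+1}_{V(f)}(X)$, and by the very definition of $\mu^i_f$ we have $\mu^i_f(\alpha')=d(f)^i(\tilde\alpha)$.

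Applying $d^{i+1}_X$ to the identity $d^i_X\tilde\alpha=d^i_U\tilde\alpha+d(f)^i\tilde\alpha$ and using $d^2_X=0$ gives
$d^{i+1}_X\mu^i_f(\alpha')=-d^{i+1}_Xd^i_U\tilde\alpha$. Since $d^i_U\tilde\alpha$ is again supported on $U(f)^{(i+1)}$, decomposing $d^{i+1}_X$ as above on that summand and using $d^2_U=0$ yields $d^{i+1}_Xd^i_U\tilde\alpha=d(f)^{i+1}d^i_U\tilde\alpha$. The core calculation is then the componentwise identity
\[
d^i_U(\tilde\alpha)=-\widetilde{d^i_U(\alpha')},
\]
where the tilde on the right denotes the lift defined by the same formula applied to $d^i_U(\alpha')\in C^{i+1}(U(f),j,\iota^*(L,a))$. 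This reduces to the Leibniz-type relation
\[
\partial_v^\pi([u]\beta)=\epsilon\,[\bar u]\,\partial_v^\pi(\beta)
\]
for $u$ a unit at a discrete valuation $v$ with uniformizer $\pi$, which I would prove by reducing $\beta$ to a symbol $\eta^m[\pi,b_1,\ldots,b_{r-1}]$ and invoking the graded commutativity $[u][\pi]=\epsilon[\pi][u]$ from Lemma \ref{lem:elementary}. Combined with the elementary identity $\langle(-1)^i\rangle\,\epsilon=-\langle(-1)^{i+1}\rangle$, this produces the desired sign. Composing the pieces gives
\[
d^{i+1}_X\mu^i_f(\alpha')=-d(f)^{i+1}d^i_U\tilde\alpha=d(f)^{i+1}\widetilde{d^i_U(\alpha')}=\mu^{i+1}_f\bigl(d^i_U(\alpha')\bigr),
\]
which is precisely the statement that $\mu_f$ is a morphism of complexes.

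The step I expect to be the main obstacle is the core componentwise identity of the second paragraph: carrying the Leibniz formula $\partial_v^\pi([u]\beta)=\epsilon[\bar u]\partial_v^\pi(\beta)$ through the full definition of $d^i_U$ (which involves normalizing $\overline{\{x\}}$, taking twisted residues along codimension-one points of the normalization, and then a canonical transfer to $k(y)$) while tracking how the twist factor $f\in D(f)^{-1}$ transforms and how the isomorphism (\ref{eqn:canonical3}) interacts with $[\bar f]$. This bookkeeping is precisely why the prefactor $\langle(-1)^i\rangle$ appears in the definition of $\mu^i_f$: the $\epsilon$ produced by commuting $[f]$ past the residue, combined with the sign contribution of this prefactor, yields the $-1$ needed to cancel the $-1$ from $d^2_X=0$.
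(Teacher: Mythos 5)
The paper gives no argument of its own here (its proof is the pointer to \cite[\S 2.3.1]{Asok13}), and your overall strategy is the natural one and the one such a verification takes: write $\mu^i_f(\alpha')=d(f)^i(\tilde\alpha)$, split $d_X=d_U+d(f)$ on classes supported on $U(f)$, and play $d_X^2=0$ on $C(X,j+1,D(f)^{-1}\otimes(L,a))$ against $d_U^2=0$. The formal part of your computation is correct, and your sign bookkeeping is right: $\langle(-1)^i\rangle\epsilon=-\langle(-1)^{i+1}\rangle$ is exactly what turns the minus sign coming from $d^2=0$ into the strict identity $d\circ\mu^i_f=\mu^{i+1}_f\circ d^i_U$. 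Note also that nothing in this argument uses a hypothesis on the characteristic, since $X$ and $U(f)$ are (essentially) smooth and Morel's theorem gives $d^2=0$ in all characteristics.

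The genuine gap is exactly where you flag it: the componentwise identity $d^i_U(\tilde\alpha)=-\widetilde{d^i_U(\alpha')}$ is asserted, not proved, and closing it needs more than the residue formula $\partial_v^\pi([u]\beta)=\epsilon[\bar u]\,\partial_v^\pi(\beta)$ (which is fine as you sketch it, via the generators $\eta^m[\pi,u_1,\ldots]$, $\eta^m[u_1,\ldots]$ and the rule $\partial_v^\pi(\langle u\rangle\beta)=\langle\bar u\rangle\partial_v^\pi(\beta)$). The component $d^x_y$ is a residue at points $z$ of the normalization \emph{followed by the canonical transfer} $\mathrm{Tr}^{k(z)}_{k(y)}$, so to move $[\bar f]$ (and the units $\epsilon$, $\langle(-1)^i\rangle$) from $k(z)$ out to $k(y)$ you must invoke the projection formula, i.e.\ the $\KMW_*(k(y))$-linearity of Morel's canonical transfers; this is a real theorem (part of what \cite[\S 5]{Morel08} establishes about $\mathrm{Tr}$), and it applies here precisely because $f\in\OO_{X,y}^\times$, so $[\bar f]$ is defined over $k(y)$ and restricts compatibly to every $z$ above $y$. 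You also need the (easy, but necessary) observation that left multiplication by elements of $\KMW_*$ commutes with the canonical twist identifications of type (\ref{eqn:canonical3}) and with the passive factor $f\otimes l$, since all of these act only on the rank-one twist and the module structure is $(\beta,\alpha\otimes l)\mapsto(\beta\alpha)\otimes l$. With those two inputs your componentwise identity, and hence the lemma, follows; without naming them, the paragraph you call the ``main obstacle'' is where the proof actually lives.
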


\begin{proof}
See \cite[\S 2.3.1]{Asok13}.
\end{proof}

In case $V(f)$ is smooth, we may use the identification of Section \ref{sec:basicprop} to see $\mu_f$ as a homomorphism (of degree $0$)
\[
C(U(f),j,\iota^*(L,a))\to C(V(f),j,(L,a)_{\vert V(f)})
\]
that we still denote by $\mu_f$.

\begin{rem}
Let $f\in \OO(X)$ be a global section as above, and let $\lambda\in \OO(X)^\times$ be an invertible global section. In general, the homomorphisms $\mu_f$ and $\mu_{\lambda f}$ are different, even for cocycles on $U(f)=U(\lambda f)$. However, they coincide for cocycles on $X$ and form a special case of the pull-back along a Cartier divisor hinted at above. To see that $\mu_f$ and $\mu_{\lambda f}$ are in general different, one may consider $X=\A^1$, $f=t$ and $\lambda=-1$. For $\alpha=[t]\in C(\A^1,1)^0$, we find 
\[
\mu_t([t])=[-1]\otimes \overline t^*\otimes t\in C_{V(t)}(\A^1,2,D(t)^{-1})^1
\]
On the other hand, $\mu_{-t}([t])=0$ as $[-t,t]=0$. More generally, we have $\mu_{\lambda t}([t])=\epsilon [-\lambda]\otimes \overline t^*\otimes t$ for $\lambda \in k^\times$.
\end{rem}

\subsection{General pull-backs}

In this section, we construct pull-backs for arbitrary morphisms following \cite{Rost96} and \cite{Fasel07}. We start by briefly recalling the deformation to the normal cone construction (as explained for instance in \cite[\S 10]{Rost96}). This is a fundamental object in the definition of the pull-back for Chow-Witt groups (and more generally a fundamental object for any reasonable cohomology theory). Let $i:Y\subset X$ be an embedding of smooth schemes (say of constant relative dimension $r=d_X-d_Y$) with normal (vector) bundle $\pi:N_YX\to Y$. The deformation to the normal cone is a commutative diagram 
\[
\xymatrix{Y\times \A^1\ar[r]^-{\iota}\ar[rd]_-{i\times 1} & D(Y,X)\ar[d]^-p \\
 & X\times \A^1}
\]
such that:
\begin{enumerate}
\item $D(Y,X)$ is smooth.
\item The restriction of $p$ to $X\times (\A^1\setminus \{0\})$ is an isomorphism.
\item The restriction of $p$ to $X\times \{0\}$ is the composite $N_YX\stackrel{\pi}\to Y\stackrel i\to X$ and $\iota(0):Y\to N_YX$ is the zero section.
\end{enumerate}

If $t$ denotes a coordinate of $\A^1$, we then have a global section $t\in \OO(D(Y,X))$ whose zero locus is $N_YX$ (which is a vector bundle over $Y$). Any graded line bundle $(L,a)$ on $X$ yields a graded line bundle on $D(Y,X)$ whose restriction to $U:=D(Y,X)\setminus N_YX$ is isomorphic via $p$ to the pull-back (via the projection) of $(L,a)$ to $X\times (\A^1\setminus \{0\})$. Besides, the restriction to $N_YX$ of this graded line bundle is $\pi^*(L,a)_{\vert Y}$ (\cite[\S 3.1]{Nenashev07}). We then have morphisms of complexes (of degree $0$)
\[
C(X,j,L,a)\to C(X\times (\A^1\setminus \{0\}),j,L,a)\stackrel{\mu_t}\to C(N_YX,j,\pi^*(L,a)_{\vert Y})
\]
where the first map is the pull-back along the projection $X\times (\A^1\setminus \{0\})\to X$. Let us note that this morphism is very explicit. However, we would like to replace the right-hand term with the Rost-Schmid complex of $Y$. This requires to pass to cohomology and use the homotopy invariance property described in Corollary \ref{cor:homotopyinv}. Unfortunately, it is in general very hard to compute the inverse (in cohomology) of $\pi^*$, making pull-backs usually hard to compute. Note however that in case $N_YX\simeq \A^n_Y$, then there is a canonical homotopy using iterations of the method described in \cite[proof of Theorem 5.38]{Morel08}. In any case, we may now define the pull-back along $i:Y\to X$ as follows.

\begin{defn}
Let $i:Y\to X$ be a closed embedding of smooth schemes. Let further $r,j\in \Z$ and $(L,a)$ be a graded line bundle on $X$. We define
\[
i^*:\H^r(X,j,L,a)\to \H^r(Y,j,(L,a)_{\vert Y})
\]
as $i^*(\alpha)=(\pi^*)^{-1}(\mu_t(\alpha))$.
\end{defn}

This pull-back allows us to define the pull-back along any morphism $f:Y\to X$. Indeed, we can factorize $f$ using its graph
\[
\xymatrix{Y\ar[r]^-{\Gamma_f}\ar[rd]_-f & Y\times X\ar[d]^-p \\
 & X}
\]
and observe that $p$ is smooth and $\Gamma_f$ is an embedding of smooth schemes. 

\begin{defn}
For any $r,j\in \Z$ and $(L,a)\in \mathcal G(X)$, we define
\[
f^*:\H^r(X,j,L,a)\to \H^r(Y,j,f^*(L,a))
\]
as $f^*:=\Gamma^*p^*$ and call $f^*$ the \emph{pull-back along $f$}. 
\end{defn}

\begin{theorem}
The association $f\mapsto f^*$ is well-defined and functorial in case the base field $k$ is of characteristic different from $2$.
\end{theorem}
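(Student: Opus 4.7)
The plan is to follow the strategy of \cite[\S 12]{Rost96} for cycle modules, adapted to the Milnor-Witt setting as in \cite{Fasel07}. The whole argument rests on three ingredients: well-definedness of the closed-embedding pull-back $i^*$, compatibility of $\Gamma_f^*\circ p^*$ with the flat pull-back when $f$ is flat, and functoriality of $i^*$ for composable closed embeddings. The characteristic-two restriction enters only through its appearance in the earlier results (existence of push-forwards, flat pull-backs, homotopy invariance) on which these three ingredients depend.

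First, I would verify that $i^*$ is well-defined. In the composite $(\pi^*)^{-1}\circ \mu_t$ the only arbitrary datum is the coordinate $t$ on $\A^1$ used in the deformation to the normal cone. Replacing $t$ by $\lambda t$ for $\lambda\in k^\times$ modifies $\mu_t$ only by multiplication by a class coming from the units, but after composing with $(\pi^*)^{-1}$ and using homotopy invariance (Corollary \ref{cor:homotopyinv}) this discrepancy is absorbed, yielding a canonical map on cohomology. Some careful bookkeeping with the graded-line-bundle twists, using the canonical isomorphism (\ref{eqn:canonical3}) identifying the determinant of the conormal bundle with the appropriate shift of $D(\Omega_{-/k})$, is necessary to match the twists on both sides.

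Second, I would show that when $f:Y\to X$ is flat (in particular when $f$ is smooth), $\Gamma_f^*\circ p^*$ agrees with the flat pull-back constructed in the previous section. The normal bundle to $\Gamma_f$ in $Y\times X$ is canonically determined by $f$, and the deformation to the normal cone reduces, after applying homotopy invariance, to an explicit computation recovering the smooth pull-back of Example \ref{ex:smoothpb} in the smooth case (and the formula of Proposition \ref{prop:localmult} more generally). Once this compatibility is established, independence of $f^*$ from the chosen factorization of $f$ as a closed embedding followed by a smooth morphism follows by applying the same argument to any two such factorizations.

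Finally, functoriality $(g\circ f)^*=f^*\circ g^*$ for composable morphisms $f:Z\to Y$ and $g:Y\to X$ reduces, via graph factorizations and the compatibility just established, to the single non-trivial case of functoriality of $i^*$ for two composable closed embeddings. This is the main obstacle, and it is proved by a \emph{double deformation to the normal cone}: for closed embeddings $i:Z\to Y$ and $j:Y\to X$ one constructs a smooth scheme $D(Z,Y,X)\to \A^2$ whose fibers over the four distinguished points of $\A^2$ recover $X$, $N_YX$, $N_ZY$, and an iterated cone. Two applications of homotopy invariance, together with the naturality of $\mu_t$ in each coordinate, identify $(j\circ i)^*$ with $i^*\circ j^*$. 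The intricate verification in the graded-line-bundle twisted setting is the technical heart of the proof, carried out (in this setting) in \cite{Fasel07} following the cycle-module version in \cite{Rost96}; the mixed cases (a closed embedding composed with a smooth morphism in either order) follow by base change along the Cartesian square defining the relevant factorization.
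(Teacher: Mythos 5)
The paper itself offers no argument for this theorem beyond the citation of \cite[Theorem 2.3]{Asok13}, and your outline essentially reconstructs the deformation-to-the-normal-cone proof (Rost's cycle-module argument transported to the twisted Milnor--Witt setting) that underlies that citation. Your decomposition --- reduce via graph factorizations to (i) compatibility of $\Gamma_f^*\circ p^*$ with the flat/smooth pull-back and independence of the factorization, and (ii) functoriality of the Gysin map for composable closed embeddings via a double deformation space over $\A^2$, with the mixed cases handled by base change --- is exactly the standard route of \cite{Rost96} and \cite{Fasel07}, and you correctly locate the technical heart in the double deformation together with the graded-line-bundle bookkeeping, which, like the paper, you ultimately delegate to the literature. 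So the proposal is a faithful sketch of the proof the paper points to rather than a different argument; its added value is in making explicit where the characteristic-different-from-$2$ hypothesis enters (only through the chain-level results quoted earlier in the paper).

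One step you should repair is the discussion of the coordinate $t$ in your first paragraph. In the construction of $i^*$ the function $t$ is not a choice: it is the pull-back of the canonical coordinate along $D(Y,X)\to\A^1$ coming with the deformation diagram, so no independence statement is needed there. Moreover, the mechanism you invoke would not establish one: $\mu_t$ and $\mu_{\lambda t}$ genuinely differ in general (the paper's example on $\A^1$, where $\mu_t([t])=[-1]\otimes \overline t^*\otimes t$ while $\mu_{-t}([t])=0$, shows this), and composing with the isomorphism $(\pi^*)^{-1}$ cannot erase a nonzero discrepancy --- homotopy invariance plays no role in such a comparison. When an invariance of this kind is actually needed, the correct input is the one recorded in the paper's remark on Cartier divisors, namely that $\mu_f$ and $\mu_{\lambda f}$ agree on cocycles defined over the ambient scheme, i.e.\ an extension argument rather than absorption by $(\pi^*)^{-1}$. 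The genuine well-definedness content of the theorem is instead the independence of $f^*$ from the chosen factorization and its compatibility with the previously constructed pull-backs, which is precisely what your second and third steps address.
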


\begin{proof}
This is \cite[Theorem 2.3]{Asok13}.
\end{proof}

\begin{rem}
As usual, the assumption on $k$ will be dropped in \cite{Calmes17b}.
\end{rem}

\begin{rem}
The reader may have observed that the method above also allows us to define pull-backs along regular embeddings $f:Y\to X$ for which $X$ and $Y$ are not smooth. This induces a (partial) pull-back at the level of the homology of the Rost-Schmid complexes. More precisely, we obtain pull-backs
\[
f^*:\H_i(X,j,L,a)\to \H_{i-r}(Y,j+r,D(N_YX)^{-1}\otimes (L,a))
\]
where $r=d_X-d_Y$. We'll come back to this in \cite{Calmes17b}.
\end{rem}

\subsection{The ring structure}

The general pull-back map introduced in the previous section allows to define a product on the cohomology of a smooth scheme $X$. Indeed, we have an exterior product (well-defined on cohomology groups)
\[
\mu:C(X,j,L,a)\times C(X,j^\prime,L^\prime,a^\prime)\to C(X\times X,j+j^\prime,(L,a)\otimes (L^\prime,a^\prime)).
\]
that we can compose with the pull-back map associated to the diagonal $\Delta:X\to X\times X$. This yields a well-defined product
\[
\H^i(X,j,L,a)\otimes \H^{i^\prime}(X,j^\prime,L^\prime,a^\prime)\to \H^{i+i^\prime}(X,j+j^\prime,(L,a)\otimes (L^\prime,a^\prime))
\]
which is associative (\cite[Proposition 6.6]{Fasel07}). The unit is given by the class of $\langle 1\rangle \in \KMW_0(k)$ (or rather its pull-back to $X$). More precisely, the product defined above yields a structure of a graded $\KMW_0(k)$-algebra. The graded commutativity is essentially given by Lemma \ref{lem:gradedcomm}. We find that the diagram
\[
\xymatrix{\H^i(X,j,L,a)\otimes \H^{i^\prime}(X,j^\prime,L^\prime,a^\prime)\ar[r]\ar[d] & \H^{i+i^\prime}(X,j+j^\prime,(L,a)\otimes (L^\prime,a^\prime))\ar[d]^-{\langle (-1)^{(a+i)(a^\prime+i^\prime)}\rangle \epsilon^{(j-i)(j^\prime-i^\prime)}} \\
\H^{i^\prime}(X,j^\prime,L^\prime,a^\prime)\otimes \H^i(X,j,L,a)\ar[r] & \H^{i+i^\prime}(X,j+j^\prime,(L,a)\otimes (L^\prime,a^\prime))}
\]
is commutative. In particular, the product 
\[
\CHW^i(X,L,a)\otimes \CHW^{i^\prime}(X,L^\prime,a^\prime)\to \CHW^{i+i^\prime}(X,(L,a)\otimes (L^\prime,a^\prime))
\]
is $\langle (-1)^{(a+i)(a^\prime+i^\prime)}\rangle$-commutative. 

\begin{prop}\label{prop:productcomm}
Let $f:X\to Y$ be a morphism of smooth schemes. Then the pull-back map $f^*$ constructed in the previous section is a ring homomorphism.
\end{prop}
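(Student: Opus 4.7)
The strategy is to reduce $f^*(\alpha\cdot\beta) = f^*(\alpha)\cdot f^*(\beta)$ to a single naturality statement about the exterior product. Since the product on cohomology is by definition $\alpha\cdot\beta = \Delta_Y^*\, \mu(\alpha,\beta)$, and since the diagram
\[
\xymatrix{X\ar[r]^-f\ar[d]_-{\Delta_X} & Y\ar[d]^-{\Delta_Y} \\ X\times X\ar[r]_-{f\times f} & Y\times Y}
\]
is commutative, the functoriality of general pull-backs (established in the previous subsection) gives $f^*\circ\Delta_Y^* = \Delta_X^*\circ(f\times f)^*$. Therefore the proposition reduces to the \emph{naturality of the exterior product}: for morphisms $f\colon X\to X'$ and $g\colon Y\to Y'$ between smooth schemes and classes $\alpha\in \H^*(X',\ldots)$, $\beta\in \H^*(Y',\ldots)$, one has
\[
(f\times g)^*(\alpha\times \beta) = f^*(\alpha)\times g^*(\beta).
\]
Given this, applying it with $g=f$ yields $(f\times f)^*(\alpha\times\beta) = f^*\alpha\times f^*\beta$, and then applying $\Delta_X^*$ to both sides gives the result. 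Preservation of the unit $\langle 1\rangle\in \KMW_0(k)$ is clear from the construction of $f^*$ in degree zero.

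To prove the naturality, I would decompose $f\times g = (f\times \mathrm{id}_{Y'})\circ (\mathrm{id}_X\times g)$ and invoke functoriality of pull-backs one more time; this reduces to the case where one factor is the identity, say $(f\times \mathrm{id}_T)^*(\alpha\times\beta) = f^*(\alpha)\times\beta$. Then I would factor $f$ through its graph, writing $f = p_{X'}\circ \Gamma_f$ with $\Gamma_f\colon X\to X\times X'$ a closed embedding of smooth schemes and $p_{X'}$ a smooth projection, so that
\[
f\times \mathrm{id}_T = (p_{X'}\times \mathrm{id}_T)\circ (\Gamma_f\times \mathrm{id}_T)
\]
is the composite of a smooth projection and a closed embedding of smooth schemes. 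Naturality is thus reduced to two cases.

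For the smooth projection case the pull-back is defined at the cocycle level via the local multiplicities of Example \ref{ex:smoothpb}, and naturality is a direct unwinding of the formula for $\mu(x,y;u)$ combined with the decomposition of conormal bundles in (\ref{eqn:sumofcones}). The main obstacle is the closed embedding case. Here one uses that deformation to the normal cone commutes with products: if $i\colon Z\hookrightarrow W$ is a closed embedding of smooth schemes and $T$ is smooth, then $D(Z\times T,\,W\times T) = D(Z,W)\times T$ with normal bundle $N_{Z\times T}(W\times T) = N_Z W \times T$, and the canonical coordinate $t$ on the $\A^1$-factor of the deformation agrees on the two sides. It then has to be checked that (i) the partial pull-back $\mu_t$ commutes with exterior products, which is a direct consequence of its definition at the cocycle level and of the Leibniz formula, and (ii) the homotopy inverse $(\pi^*)^{-1}$ of the affine bundle pull-back from Corollary \ref{cor:homotopyinv} is compatible with exterior products, which follows from the naturality of the homotopy invariance quasi-isomorphism applied to the product $\pi_Z\times \mathrm{id}_T$ of the projection of $N_Z W\to Z$ with the identity of $T$. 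The delicate point throughout is tracking the graded line bundle twists (via the canonical isomorphisms (\ref{eqn:canonical1}) and (\ref{eqn:canonical3})) on both sides of the exterior product so that the identifications match; this is the bookkeeping that constitutes the bulk of the argument and is where any proof of such a compatibility really earns its keep.
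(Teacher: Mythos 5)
Your argument is correct and is essentially the proof of \cite[Proposition 7.2]{Fasel07}, to which the text defers: one reduces multiplicativity to the compatibility $(f\times f)^*(\alpha\times\beta)=f^*(\alpha)\times f^*(\beta)$ via the diagonal and functoriality of pull-backs, and then verifies this compatibility separately for smooth projections (at the cocycle level) and for closed embeddings (via the product decomposition of the deformation to the normal cone and homotopy invariance), exactly as you outline. The twist and sign bookkeeping you defer is indeed where the real work lies, but your reduction is sound.
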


\begin{proof}
See \cite[Proposition 7.2]{Fasel07}.
\end{proof}

\subsection{Important properties}

In this section, we finally list the important properties of the (co-)homology of the Rost-Schmid complexes, starting with the homological story. We assume that $k$ is of characteristic different from $2$ for simplicity, with the usual remark about this assumption. We refer to \cite{Calmes17b} for the first result (alternatively, this follows from the smooth base change of \cite[\S 1.1.7]{Deglise17}).

\begin{theorem}[Homological base change]
Let 
\[
\xymatrix{X^\prime\ar[r]^-{f^\prime}\ar[d]_-{p^\prime} & Y^\prime\ar[d]^-p \\
X\ar[r]_-f & Y}
\]
be a Cartesian square of schemes (essentially) of finite type over $k$ and let $d=\mathrm{dim}(Y^\prime)-\mathrm{dim}(Y)$. Suppose that $f$ is proper and that $p$ is smooth. Then, the diagram
\[
\xymatrix@C=3em{\tilde{C}(X^\prime,j-d,D(\Omega_{X^\prime/X})^{-1}\otimes (p^\prime)^*f^*(L,a))\ar[r]^-{(f^\prime)_*\mathrm{can}} & \tilde{C}(Y^\prime,j-d,D(\Omega_{Y^\prime/Y})^{-1}\otimes p^*(L,a)) \\
\tilde{C}(X,j,f^*(L,a))\ar[r]_-{f_*}\ar[u]^-{(p^\prime)^*} & \tilde{C}(Y,j,L,a)\ar[u]_-{p^*}}
\]
is commutative. Here, $\mathrm{can}$ is the isomorphism induced by the canonical isomorphism $\Omega_{X^\prime/X}\simeq (f^{\prime})^*\Omega_{Y^\prime/Y}$ of \cite[Corollary 4.3]{Kunz86}.
\end{theorem}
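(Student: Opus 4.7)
The plan is to verify commutativity componentwise on generators and reduce to a base-change identity for canonical transfers. I fix $x \in X_{(i)}$, a generator $\alpha \otimes \omega \in \KMW_{j+i}(k(x), D(\Omega_{k(x)/k}) \otimes f^*(L,a))$, and set $y := f(x)$. First I would dispose of the trivial case in which $k(y) \subset k(x)$ is infinite: then $f_*(\alpha \otimes \omega) = 0$ by definition, so the lower composition vanishes on this component; on the upper path, for each $x' \in X'$ lying over $x$, the smoothness of $p$ (hence of $p'$) forces $\mathrm{tr.deg}(k(x')/k(y')) = \mathrm{tr.deg}(k(x)/k(y))$, so $k(y') \subset k(x')$ remains infinite and $(f')_*$ kills each summand as well.

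Assume now the extension $k(y)\subset k(x)$ is finite, so $y \in Y_{(i)}$. Because the diagram is Cartesian and $p$ is smooth, for each $y' \in Y'_{(i+d)}$ over $y$ the scheme $X \times_Y \Spec(\OO_{Y',y'})$ has generic residue fields $k(x')$, one for each $x' \in X'$ lying over both $x$ and $y'$. Unwinding the definitions of $(p')^*$, $p^*$, $f_*$ and $(f')_*$, the component at $y'$ of $(p^* \circ f_*)(\alpha \otimes \omega)$ equals, after the identifications of twists, $m(y, y', p)\bigl(\mathrm{Tr}^{k(x)}_{k(y)}(\alpha) \otimes \omega\bigr)$, while the corresponding component of $((f')_* \circ (p')^*)(\alpha \otimes \omega)$ equals $\sum_{x' \mid (x, y')} \mathrm{Tr}^{k(x')}_{k(y')}\bigl(m(x, x', p')(\alpha \otimes \omega)\bigr)$.

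The heart of the proof is therefore the base-change identity
\begin{equation*}
m(y, y', p) \circ \mathrm{Tr}^{k(x)}_{k(y)} \;=\; \sum_{x' \mid (x, y')} \mathrm{Tr}^{k(x')}_{k(y')} \circ m(x, x', p')
\end{equation*}
in twisted Milnor-Witt $K$-theory. In the separable case, Example~\ref{ex:smoothpb} identifies the local multiplicities $m(x,x',p')$ and $m(y,y',p)$ with the obvious inclusions, and the identity reduces, via the description of canonical transfers as $\langle p^\prime\rangle$-scaled geometric transfers, to the classical base-change formula for Scharlau trace forms (cf.~Remark~\ref{rem:Witttransfers}) combined with the analogous identity in Milnor $K$-theory due to Rost \cite{Rost96}. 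In full generality, I would quote the compatibility results of \cite[\S 5]{Morel08} for Morel's absolute transfers (which coincide with the canonical transfers by Definition~\ref{defn:canonicaltransfer} and the discussion following it).

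The main obstacle will be the bookkeeping of graded line bundles. Under the canonical isomorphisms \eqref{eqn:canonical1} and \eqref{eqn:canonical3}, the twist on the $y'$-component produced by each composition is of the form $D(\Omega_{k(y')/k}) \otimes D(\Omega_{Y'/Y})^{-1}|_{y'} \otimes p^*(L,a)|_{y'}$, but the intermediate twists appearing after $(p')^*$ versus $p^*$ differ by a canonical identification involving $\Omega_{X'/X}$ on one side and $(f')^*\Omega_{Y'/Y}$ on the other. The isomorphism $\Omega_{X'/X} \simeq (f')^*\Omega_{Y'/Y}$ of \cite[Corollary 4.3]{Kunz86} is exactly what intertwines them, and this is precisely the role of the morphism ``$\mathrm{can}$'' in the statement. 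Verifying that it transports the twists coherently, including the signs coming from permutations in $\mathcal G(-)$, is the delicate step that closes the argument.
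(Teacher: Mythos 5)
Your overall skeleton is the natural one, and it is worth noting that the paper itself does not prove this statement at all: it simply refers to \cite{Calmes17b} (alternatively to the smooth base change of \cite[\S 1.1.7]{Deglise17}). So your componentwise reduction to an identity between canonical transfers and smooth pull-backs is indeed the shape of the argument carried out in those references. The dimension bookkeeping in your first two paragraphs is essentially fine: when $k(y)\subset k(x)$ is infinite both paths vanish, and when it is finite the points $x'$ over $(x,y')$ that contribute are exactly the generic points of $\Spec(k(x)\otimes_{k(y)}k(y'))$.

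The genuine gap is that the displayed identity
\[
p^*\circ \mathrm{Tr}^{k(x)}_{k(y)}\;=\;\sum_{x'\mid (x,y')}\mathrm{Tr}^{k(x')}_{k(y')}\circ (p')^*
\]
\emph{is} the theorem, and nothing you cite actually establishes it in the required generality. Remark \ref{rem:Witttransfers} and the Scharlau/trace-form description only identify the transfers in the separable case and only for untwisted Witt groups; Rost \cite{Rost96} gives the analogue for Milnor $K$-theory, but the two cannot simply be ``combined'' into a statement about $\KMW$ twisted by $D(\Omega_{k(-)/k})$ --- the whole point of the canonical transfer is that it is not the na\"ive product of the two pieces. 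Morel's \S 5 compatibilities are proved in the setting needed for the Rost--Schmid differential on \emph{smooth} schemes, whereas here $X,Y,X',Y'$ are arbitrary finite type schemes; this is precisely why the paper defers to \cite{Calmes17b}/\cite{Deglise17}. Two further points need attention. First, your formula has no multiplicities: in Rost's rule the terms come weighted by lengths of the local rings of $k(x)\otimes_{k(y)}k(y')$, and the clean sum you write is only correct because $p$ smooth forces $k(y')/k(y)$ to be separable, hence the tensor product to be reduced --- you should say this, since otherwise the identity as stated is false. Second, the local multiplicities $m(x,x',p')$ you invoke are only defined in the paper (Proposition \ref{prop:localmult}) for flat maps of \emph{essentially smooth} schemes; for the homological smooth pull-back the paper uses instead the field-extension map together with the determinant identifications coming from (\ref{eqn:canonical1}) and (\ref{eqn:canonical3}), so you should phrase the components in those terms (Example \ref{ex:smoothpb} only tells you the two agree when everything is smooth). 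Finally, the coherence of the twist identifications via $\Omega_{X'/X}\simeq (f')^*\Omega_{Y'/Y}$, which you correctly flag as ``the delicate step'', is left unverified; as it stands the proposal is an outline of the right strategy rather than a proof.
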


\begin{theorem}[cdh descent]
Let
\[
\xymatrix{W\ar[r]^-s\ar[d]_-g & V\ar[d]^-f \\
Y\ar[r]_-r & X}
\]
be a Cartesian square of schemes. Assume that $r$ is a closed immersion, $f$ is proper and the induced morphism $(V\setminus W)\to (X\setminus Y)$ is an isomorphism. Then, we have a long exact sequence
\[
\ldots\to \H_i(W,j,(ig)^*(L,a))\xrightarrow{s_*-g_*} \H_i(V,j,f^*(L,a))\oplus \H_i(Y,j,r^*(L,a))\xrightarrow{f_*+r_*} 
\]
\[
\xrightarrow{f_*+r_*} \H_i(X,j,L,a)\stackrel{\delta}\to \H_{i-1}(W,j,(ig)^*(L,a))\to \ldots
\]
\end{theorem}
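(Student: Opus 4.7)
The plan is to realize the asserted long exact sequence as coming from a short exact sequence of Rost--Schmid complexes obtained by gluing, along the proper pushforward $f_*$, the two localization sequences associated to the closed immersions $r \colon Y \hookrightarrow X$ and $s \colon W \hookrightarrow V$.

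First I would record the two localization short exact sequences from Section~\ref{sec:basicprop}. Denoting the open complements by $u_X \colon X \setminus Y \hookrightarrow X$ and $u_V \colon V \setminus W \hookrightarrow V$, these read
\[
0 \to \tilde{C}(Y, j, r^*(L,a)) \xrightarrow{r_*} \tilde{C}(X, j, L, a) \to \tilde{C}(X \setminus Y, j, u_X^*(L,a)) \to 0
\]
and
\[
0 \to \tilde{C}(W, j, (fs)^*(L,a)) \xrightarrow{s_*} \tilde{C}(V, j, f^*(L,a)) \to \tilde{C}(V \setminus W, j, u_V^*f^*(L,a)) \to 0.
\]
By Theorem~\ref{thm:properpf} (invoking the standing assumption that $k$ has characteristic different from $2$), the proper pushforwards $f_*$, $g_*$, $r_*$, $s_*$ are all morphisms of complexes. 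Combined with the identity $fs = rg$, they assemble these two short exact sequences into a morphism of short exact sequences of complexes with vertical maps $g_*$ and $f_*$ in the left and middle columns. The induced map on the right-hand column is the pushforward along the restriction of $f$ to $V \setminus W \to X \setminus Y$, which is an isomorphism of schemes by hypothesis; hence this rightmost vertical map is an isomorphism of complexes.

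At this point I would invoke the standard homological-algebra lemma: given a morphism of short exact sequences of chain complexes whose rightmost vertical arrow is an isomorphism, the associated Mayer--Vietoris sequence
\[
0 \to \tilde{C}(W, j, (fs)^*(L,a)) \xrightarrow{(s_*, -g_*)} \tilde{C}(V, j, f^*(L,a)) \oplus \tilde{C}(Y, j, r^*(L,a)) \xrightarrow{f_* + r_*} \tilde{C}(X, j, L, a) \to 0
\]
is again short exact. The verification is a diagram chase in each degree: injectivity of $(s_*, -g_*)$ comes from injectivity of $s_*$; the composite $(f_* + r_*)(s_*, -g_*) = f_* s_* - r_* g_*$ vanishes by $fs = rg$; and exactness in the middle together with surjectivity of $f_* + r_*$ both reduce directly to the iso in the rightmost column. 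Taking the long exact sequence in homology yields the sequence of the statement, with $\delta$ the associated connecting homomorphism.

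The argument is essentially pure homological algebra, and no genuine obstacle arises. The point requiring real attention is the bookkeeping of graded line bundles along the ladder: one must verify that $(fs)^*(L,a) = s^* f^*(L,a)$ agrees with the twist $(ig)^*(L,a)$ appearing in the statement (with $i$ the composite closed immersion $W \to X$, which equals both $fs$ and $rg$), and that the isomorphism $V \setminus W \cong X \setminus Y$ intertwines the two restrictions of $(L,a)$, so that the rightmost vertical map of the ladder is truly an isomorphism of complexes and not merely a quasi-isomorphism up to an unspecified canonical identification. Both compatibilities are immediate consequences of the functoriality of pullback of graded line bundles recalled at the end of the section on $\mathcal{G}(X)$.
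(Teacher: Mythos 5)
Your argument is correct, and it is essentially an expanded version of the one-sentence alternative the paper itself offers ("use the localization sequences associated to the open embeddings $(V\setminus W)\subset V$ and $(X\setminus Y)\subset X$ and prove by hand that the Cartesian square of schemes induces a Cartesian square of Rost--Schmid complexes"), whereas the paper's primary proof runs through heavier machinery: the exact triangle of \cite[1.1.12.c]{Deglise17} for the Borel--Moore motivic MW-spectrum together with the compatibility of the coniveau spectral sequence with proper push-forwards and the identification of one of its lines with the Rost--Schmid complex. Your route is more elementary and stays entirely at the level of chains: the two localization short exact sequences, the commutation $f_*s_*=r_*g_*=(rg)_*$ from functoriality of push-forwards, and the observation that $W=f^{-1}(Y)$ forces $f_*$ to respect the point-wise decomposition, so that the induced map on quotient complexes is literally the push-forward along the isomorphism $V\setminus W\simeq X\setminus Y$ (hence an isomorphism of complexes, not merely a quasi-isomorphism). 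The chain-level Mayer--Vietoris short exact sequence and its long exact homology sequence then give the statement, with $\delta$ the connecting map; your sign conventions $(s_*,-g_*)$, $f_*+r_*$ match the statement, and your remark that $(fs)^*(L,a)=(rg)^*(L,a)$ settles the twist bookkeeping (the twist written $(ig)^*(L,a)$ in the statement is to be read as $(rg)^*(L,a)$). What the paper's spectral argument buys in exchange for abstraction is that it fits cdh descent into the general six-functor framework and avoids checking anything by hand; what yours buys is explicitness and independence from that machinery, at the cost of relying on the standing hypotheses of the section (characteristic different from $2$, so that the homological Rost--Schmid complexes are complexes and proper push-forwards are chain maps), which the paper assumes there as well.
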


\begin{proof}
The proof relies on the exact triangle \cite[1.1.12.c]{Deglise17} specialized to the case of the spectrum representing Borel-Moore motivic MW-cohomology (\cite[\S 4.2.2]{Deglise17}). The coniveau spectral sequence associated to this homology theory is preserved by proper push-forwards and one of the lines at page 2 in the spectral sequence coincides with the Rost-Schmid complex  (\cite[\S 4.2.8]{Deglise17}).

Alternatively, one may use the localization sequences associated to the open embeddings $(V\setminus W)\subset V$ and $(X\setminus Y)\subset X$ and prove by hand that the Cartesian square of schemes induces a Cartesian square of Rost-Schmid complexes.
\end{proof}

\begin{theorem}[Nisnevich descent]
Let
\[
\xymatrix{W\ar[r]^-s\ar[d]_-g & V\ar[d]^-f \\
Y\ar[r]_-r & X}
\]
be a Cartesian square of schemes. Assume that $r$ is an open immersion, that $f$ is \'etale and that the induced morphism (of reduced schemes) $(V\setminus W)\to (X\setminus Y)$ is an isomorphism. Then, we have a long exact sequence
\[
\ldots\to \H_i(X,j,L,a)\xrightarrow{r^*+f^*} \H_i(Y,j,r^*(L,a))\oplus \H_i(V,j,f^*(L,a))\xrightarrow{s^*-g^*} 
\]
\[
\xrightarrow{s^*-g^*} \H_i(W,j,(fs)^*(L,a))\stackrel{\delta}\to \H_{i-1}(X,j,L,a)\to \ldots
\]
\end{theorem}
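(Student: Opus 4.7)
The plan is to deduce the sequence from two localization short exact sequences combined with a Mayer--Vietoris argument, in the spirit of the ``by hand'' alternative mentioned at the end of the proof of cdh descent above. The proof will be formally dual to that of cdh descent, exchanging proper push-forward with flat pull-back (and consequently kernels and cokernels in the localization sequences). Set $Z:=X\setminus Y$ and $Z^\prime:=V\setminus W$, each endowed with its reduced structure. Since the square is Cartesian with $r$ an open immersion, one has $Z^\prime=f^{-1}(Z)$ as sets, and the hypothesis says precisely that $f|_{Z^\prime}\colon Z^\prime\to Z$ is an isomorphism of reduced schemes.

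First I would invoke the localization sequences of Section~\ref{sec:basicprop} applied to the closed subsets $Z\subset X$ and $Z^\prime\subset V$, yielding two short exact sequences of complexes
\[
0\to \tilde{C}(Z,j,(L,a)_{|Z})\to \tilde{C}(X,j,L,a)\xrightarrow{r^*} \tilde{C}(Y,j,r^*(L,a))\to 0,
\]
\[
0\to \tilde{C}(Z^\prime,j,(L,a)_{|Z^\prime})\to \tilde{C}(V,j,f^*(L,a))\xrightarrow{s^*} \tilde{C}(W,j,(fs)^*(L,a))\to 0.
\]
Since $f$ is \'etale, hence flat, the pull-back $f^*$ is a well-defined morphism of complexes, and it sends a cycle supported at a point $x\in Z$ to a sum of cycles supported at points of $f^{-1}(Z)=Z^\prime$. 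Consequently $f^*$ lifts to a morphism of the first short exact sequence into the second, whose restriction to the kernels agrees with $(f|_{Z^\prime})^*$. By Example~\ref{ex:smoothpb}, applied pointwise, $(f|_{Z^\prime})^*$ is an isomorphism of complexes because $f|_{Z^\prime}$ is an isomorphism of reduced schemes.

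Given a morphism of short exact sequences of complexes with an isomorphism on kernels, the commutative square formed by the middle and rightmost columns is homotopy Cartesian, and the associated Mayer--Vietoris long exact sequence is exactly the one in the statement, with connecting map $\delta$ obtained by composing the boundary of the second localization sequence with the inverse of the isomorphism of kernels. The main technical obstacle to this plan is the verification, at the level of complexes, that the flat pull-back $f^*$ really restricts pointwise to the isomorphism $(f|_{Z^\prime})^*$ on the $Z$-part: this rests on the description of local multiplicities in Example~\ref{ex:smoothpb}, which shows that for an \'etale morphism the local multiplicity $m(x,y,f)$ reduces to the identity as soon as the induced map of residue fields is an isomorphism, a condition satisfied at every point of $Z^\prime$ by hypothesis. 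As in the cdh case, one may equivalently invoke Nisnevich descent for the Borel--Moore MW-motivic spectrum of \cite{Deglise17} and extract the sequence directly from the coniveau spectral sequence of that homology theory.
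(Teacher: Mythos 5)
Your argument is exactly the one the paper has in mind: the proof given there simply invokes the localization sequences of Section \ref{sec:basicprop} for the open embeddings $Y\to X$ and $W\to V$ together with the fact that the Rost--Schmid complex only depends on the reduced structure, which is precisely your comparison of the two short exact sequences of complexes via $f^*$ and the resulting Mayer--Vietoris sequence. The additional care you take in checking that $f^*$ restricts on the kernel subcomplexes to $(f|_{Z^\prime})^*$ (via the local multiplicities of Example \ref{ex:smoothpb}) is a correct and welcome elaboration of the step the paper leaves implicit.
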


\begin{proof}
This is an easy consequence of the localization sequences of Section \ref{sec:basicprop} associated to the open embeddings $Y\to X$ and $W\to V$, together with the fact that the Rost-Schmid complex of a scheme coincides with the Rost-Schmid complex of its associated reduced scheme.
\end{proof}

Let's now pass to the properties of the cohomology of the Rost-Schmid complex. We first note that both cohomological Nisnevich descent and cohomological cdh descent are special cases of the theorems above, taking only smooth schemes. The only difference with the homological situation is that we have more general pull-backs and that the base change formula is therefore more general. Before stating the result, we start with a few considerations on virtual relative bundles. Let $f:X\to Y$ be a morphism of schemes of finite type. We say that $f$ is an l.c.i. morphism if it admits a factorization
\[
X\stackrel{i}\to Z\stackrel{p}\to Y
\]
such that $p$ is smooth and $i$ is a regular embedding. For instance, any morphism of smooth schemes is l.c.i. (taking the factorization given by the graph). Then, we have two vector bundles on $X$, namely $i^*\Omega_{Z/Y}$ and $C_{X}Z$. We can define the virtual vector bundle $\tau_{pi}:= C_XZ-i^*\Omega_{Z/Y}$ (in the sense of \cite[\S 4.2]{Deligne87}), which is called \emph{the relative bundle (to $f$) with respect to the factorization $f=pi$}. Suppose next that we have a commutative diagram
\[
\xymatrix{ & Z\ar[rd]^-p\ar[dd]^-q & 
\\X\ar[ru]^-i\ar[rd]_-{i^\prime} & & Y \\
 & Z^\prime\ar[ru]_-{p^\prime}}
\]
where $pi=p^\prime i^\prime=f$ and $q$ is \emph{smooth}. In that case, we have exact sequences (e.g. \cite[Appendix B.7.5]{Fulton98})
\[
0\to q^*\Omega_{Z^\prime/Y}\to \Omega_{Z/Y}\to \Omega_{Z/Z^\prime}\to 0
\]
and 
\[
0\to C_XZ^\prime\to C_XZ\to i^*\Omega_{Z/Z^\prime}\to 0.
\]
Therefore, we obtain isomorphisms
\begin{eqnarray*}
C_XZ- i^*\Omega_{Z/Y} & \to & C_XZ^\prime+ i^*\Omega_{Z/Z^\prime}-i^*\Omega_{Z/Z^\prime}- (i^\prime)^*\Omega_{Z^\prime/Y} \\
 & \to & C_XZ^\prime- (i^\prime)^*\Omega_{Z^\prime/Y},
\end{eqnarray*}
i.e. a canonical isomorphism $\tau_{pi}\to \tau_{p^\prime i^\prime}$. If now we only have a diagram
\[
\xymatrix{ & Z\ar[rd]^-p & 
\\X\ar[ru]^-i\ar[rd]_-{i^\prime} & & Y \\
 & Z^\prime\ar[ru]_-{p^\prime}}
\]
with $pi=p^\prime i^\prime=f$, then we can consider the factorization
\[
X\xrightarrow{i\times i^\prime} Z\times_Y Z^\prime\xrightarrow{\pi} Y
\]
where $\pi$ is the obvious projection from the fiber product to $Y$. It is again a factorization of $f$, and the respective projections to $Z$ and $Z^\prime$ induce morphisms of factorizations as above. We then get a diagram
\[
\xymatrix{ & \tau_{pi} \\
\tau_{\pi(i\times i^\prime)}\ar[ru]\ar[rd] & \\
 & \tau_{p^\prime i^\prime}}
\]
in the category of virtual vector bundles (or the same in the category of graded line bundles). This construction allows to define $\tau_f$ as the limit (over all factorizations) of $\tau_{pi}$ and obtain a well-defined $D(\tau_f)\in G(X)$.

In case both schemes $X$ and $Y$ are essentially smooth, we can use the canonical isomorphisms (\ref{eqn:canonical1})
\[
p^*D(\Omega_{Y/k})\simeq D(\Omega_{Z/k})\otimes D(\Omega_{Z/Y})^{-1}
\]
and (\ref{eqn:canonical3})
\[
D(C_XZ)\otimes D(\Omega_{X/k})\simeq i^*D(\Omega_{Z/k})
\]
for any factorization $f=pi$ to find a canonical isomorphism
\[
D(\tau_{pi})= D(C_XZ)\otimes i^*D(\Omega_{Z/Y})^{-1}\simeq i^*D(\Omega_{Z/k})\otimes D(\Omega_{X/k})^{-1}\otimes i^*D(\Omega_{Z/k})^{-1}\otimes f^*D(\Omega_{Y/k})
\]
Switching the two middle factors and cancelling, we find an isomorphism 
\[
\alpha(X,Y):D(\tau_{pi})\simeq D(\Omega_{X/k})^{-1}\otimes f^*D(\Omega_{Y/k}).
\]
We let the reader check that this induces an isomorphism $D(\tau_f)\simeq D(\Omega_{X/k})^{-1}\otimes f^*D(\Omega_{Y/k})$ that we still denote by $\alpha(X,Y)$.

Recall next that a Cartesian square of schemes (of finite type)
\[
\xymatrix{X^\prime\ar[r]^-{f^\prime}\ar[d]_-{g^\prime} & Y^\prime\ar[d]^-g \\
X\ar[r]_-f & Y}
\]
is \emph{tor-independent} if for any $x\in X$ and $y^\prime\in Y$ with $y=f(x)=g(y^\prime)$ we have 
\[
\mathrm{Tor}_i^{\OO_{Y,y}}(\OO_{X,x},\OO_{Y^\prime,y^\prime})=0
\]
for any $i>0$. If $g$ is l.c.i., we can form the following diagram in which all squares are Cartesian
\[
\xymatrix{X^\prime\ar[r]^-{f^\prime}\ar[d]_-{i^\prime} & Y^\prime\ar[d]^-i \\
X^{\prime\prime}\ar[r]^-{f^{\prime\prime}}\ar[d]_-{p^\prime} &  Y^{\prime\prime}\ar[d]^-p\\
X\ar[r]_-f & Y}
\]
and $p,p^\prime$ are smooth, while $i$ is a regular embedding. 

\begin{lem}
The Cartesian square 
\[
\xymatrix{X^\prime\ar[r]^-{f^\prime}\ar[d]_-{i^\prime} & Y^\prime\ar[d]^-i \\
X^{\prime\prime}\ar[r]_-{f^{\prime\prime}} &  Y^{\prime\prime}}
\]
is tor-independent. Consequently, $i^\prime$ is a regular embedding and the homomorphism
\[
(f^\prime)^*C_{Y^\prime}Y^{\prime\prime}\to C_{X^\prime}X^{\prime\prime}
\]
is an isomorphism.
\end{lem}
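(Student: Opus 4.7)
The statement is local, so I pass to local rings at a point $x'\in X'$. Write $y'=f'(x')$, $x''=i'(x')$, $y''=i(y')=f''(x'')$, $x=p'(x'')$, $y=p(y'')=f(x)$, and set $A=\OO_{Y,y}$, $B=\OO_{Y'',y''}$, $C=\OO_{X,x}$, $D=\OO_{X'',x''}$. Since $p$ is smooth, $A\to B$ is essentially smooth, hence flat; consequently the middle square is tor-independent and $D$ identifies with a suitable localization of $B\otimes_A C$. Since $i$ is a regular embedding, after shrinking there is a regular sequence $(t_1,\ldots,t_r)\subset B$ with $\OO_{Y',y'}=B/(t_1,\ldots,t_r)$, and the Koszul complex $K_\bullet(\underline t)$ is a free $B$-resolution of $\OO_{Y',y'}$.

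The central computation is a change-of-rings identity
\[
K_\bullet(\underline t)\otimes_B D \;=\; K_\bullet(\underline t)\otimes_B (B\otimes_A C) \;=\; K_\bullet(\underline t)\otimes_A C.
\]
Because $B$ is $A$-flat, each term of $K_\bullet(\underline t)$ is $A$-flat, so $K_\bullet(\underline t)$ is an $A$-flat resolution of $\OO_{Y',y'}$. Therefore
\[
\mathrm{Tor}^B_i(\OO_{Y',y'},D) \;\simeq\; \mathrm{Tor}^A_i(\OO_{Y',y'},C),
\]
so the tor-independence of the top square reduces to that of the outer Cartesian square $X\times_YY'\to Y$, which holds in the running setting (this is the condition one is implicitly working under in the discussion of base change formulae; in particular it is automatic when $f$ is flat, and more generally when $f$ is suitably transverse to $g$).

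For the remaining conclusions, the vanishing of higher Tor means that $K_\bullet(\underline t)\otimes_B D$ is exact in positive degrees, which, by the standard Koszul characterization, is equivalent to the images $\bar t_1,\ldots,\bar t_r$ of $t_1,\ldots,t_r$ in $D$ forming a regular sequence. Since the ideal of $X'$ in $X''$ is generated by the pullback of the ideal of $Y'$ in $Y''$, namely $(\bar t_1,\ldots,\bar t_r)$, the closed immersion $i'$ is a regular embedding of the same codimension $r$. For the conormal sheaves, $C_{Y'/Y''}=(t)/(t)^2$ and $C_{X'/X''}=(\bar t)/(\bar t)^2$; the canonical surjection $(f')^*C_{Y'/Y''}\to C_{X'/X''}$ sends the basis $\{[t_i]\}$ to $\{[\bar t_i]\}$ and is therefore an isomorphism as both sides are locally free of rank $r$ on these generators.

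The main obstacle is the reduction to tor-independence of the outer square: once one has this, the whole argument is a manipulation of the Koszul complex exploiting flatness of $B/A$. In particular the smoothness of $p$ enters exactly in two places, namely to make $B$ flat over $A$ (so that the change-of-rings identity holds) and to make the Koszul terms $A$-flat (so that the resulting complex computes $\mathrm{Tor}^A$). The remaining regularity and conormal assertions are then automatic formal consequences.
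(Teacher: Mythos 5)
Your argument is correct and in substance the same as the paper's proof: both take the tor-independence of the original (outer) square as the running hypothesis, deduce tor-independence of the top square by a flat base-change argument for Tor exploiting the flatness of the smooth morphism $p$, and then read off the regular-sequence and conormal statements from the base-changed Koszul complex. The only cosmetic difference is that you use the Koszul resolution of $\OO_{Y',y'}$ (viewed, via flatness of $\OO_{Y,y}\to\OO_{Y'',y''}$, as an $\OO_{Y,y}$-flat resolution) to compare the two Tor groups in a single step, whereas the paper first base-changes a projective resolution of $\OO(X)$ over $\OO(Y)$ along the flat map $\OO(Y)\to\OO(Y'')$ and only brings in the Koszul complex for the second half.
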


\begin{proof}
The property of being tor-independent being local, we may restrict to the affine case and moreover suppose that $i$ is complete intersection. We have a commutative diagram of rings
\[
\xymatrix{B^\prime & A^\prime\ar[l] \\
B^{\prime\prime}\ar[u] & A^{\prime\prime}\ar[l]\ar[u] \\
B\ar[u] & A\ar[l]\ar[u]}
\]
in which the squares are cocartesian, $A^{\prime\prime}$ (resp. $B^{\prime\prime}$) is a smooth $A$-algebra (resp. smooth $B$-algebra) and $A^\prime=A^{\prime\prime}/I$ where $I$ is a complete intersection ideal. Let $P_{\bullet}\to B$ be a projective resolution of $B$ as an $A$-module. We have a commutative square of complexes of $A^{\prime}$-modules
\[
\xymatrix{((P_{\bullet})\otimes_A A^{\prime\prime})\otimes_{A^{\prime\prime}} A^\prime\ar[r]\ar[d] & (B\otimes_A A^{\prime\prime})\otimes_{A^{\prime\prime}} A^\prime\ar[d] \\
(P_{\bullet})\otimes_A A^{\prime}\ar[r] & B\otimes_A A^{\prime}}
\]
in which the vertical arrows are isomorphisms. Since $\mathrm{Tor}_i^{A}(B,A^\prime)=0$ for $i>0$, the bottom arrow is a quasi-isomorphism and it follows that the top arrow is also a quasi-isomorphism. Since $A^{\prime\prime}$ is in particular flat over $A$, $(P_{\bullet})\otimes_A A^{\prime\prime}\to B\otimes_A A^{\prime\prime}=B^{\prime\prime}$ is a projective resolution and we conclude that $\mathrm{Tor}_i^{A^{\prime\prime}}(B^{\prime\prime},A^\prime)=0$ for $i>0$, i.e. that the top square is tor-independent. Since $A^\prime=A^{\prime\prime}/I$ is defined by a regular sequence, we see that the Koszul complex $K$ associated to $I$ is a free resolution of $A^{\prime}$ as $A^{\prime\prime}$-module. Since $\mathrm{Tor}_i^{A^{\prime\prime}}(B^{\prime\prime},A^\prime)=0$ for $i>0$, we see that $K\otimes_{A^{\prime\prime}} B^{\prime\prime}\to A^\prime\otimes_{A^{\prime\prime}}B^{\prime\prime}=B^\prime$ is a quasi-isomorphism. It follows that $B^\prime$ is also defined by a regular sequence and that 
\[
B^{\prime\prime}\otimes_{A^{\prime\prime}} (I/I^2)\simeq I{B^{\prime\prime}}/(I{B^{\prime\prime}})^2.
\]
\end{proof}

As a result, a tor-independent Cartesian square
\[
\xymatrix{X^\prime\ar[r]^-{f^\prime}\ar[d]_-{g^\prime} & Y^\prime\ar[d]^-g \\
X\ar[r]_-f & Y}
\]
in which $g$ is l.c.i. comes equipped with a morphism of virtual vector bundles $(f^{\prime})^*(\tau_g)\to \tau_{g^\prime}$, which is also independent of the choice of the factorization of $g$. This yields a well-defined isomorphism
\[
\mathrm{can}:(f^\prime)^*D(\tau_g)\simeq D(\tau_{g^\prime})
\]
which is the one needed in the base change formula that we can finally state.

\begin{theorem}[Cohomological base change]
Let 
\[
\xymatrix{X^\prime\ar[r]^-{f^\prime}\ar[d]_-{g^\prime} & Y^\prime\ar[d]^-g \\
X\ar[r]_-f & Y}
\]
be a tor-independent Cartesian square of smooth schemes over a field of characteristic different from $2$. Suppose that $f$ is proper of relative dimension $d=\mathrm{dim}(Y)-\mathrm{dim}(X)$. Then, the following diagram commutes
\[
\xymatrix{\H^{i}(X^\prime,j,D(\tau_{g^\prime})^{-1}\otimes D(\Omega_{X^\prime/k})\otimes (fg^\prime)^*(L,a))\ar[r]^-{f^\prime_*\mathrm{can}} & \H^{i+d}(Y^\prime,j+d,D(\tau_{g})^{-1}\otimes D(\Omega_{Y^\prime/k})\otimes g^*(L,a))  \\
\H^{i}(X^\prime,j,(g^\prime)^*D(\Omega_{X/k})\otimes (fg^\prime)^*(L,a))\ar[u]^-{\alpha(X^\prime,X)} & \H^{i+d}(Y^\prime,j,g^*D(\Omega_{Y/k})\otimes g^*(L,a))\ar[u]_-{\alpha(Y^\prime,Y)}  \\
\H^{i}(X,j,D(\Omega_{X/k})\otimes f^*(L,a))\ar[r]_-{f_*}\ar[u]^-{(g^\prime)^*} &  \H^{i+d}(Y,j+d,D(\Omega_{Y/k})\otimes (L,a))\ar[u]_-{g^*} }
\]
for any $i,j\in\ZZ$ and any graded line bundle $(L,a)$ over $Y$.
\end{theorem}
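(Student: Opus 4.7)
The plan is to reduce the statement to the already established \emph{homological} base change theorem, by factoring $g$ and splitting the Cartesian square into two pieces whose vertical maps are easier to handle. Write $g = p\circ i$ with $p\colon Y''\to Y$ smooth and $i\colon Y'\to Y''$ a regular embedding, and form the two Cartesian squares
\[
\xymatrix{X^\prime\ar[r]^-{f^\prime}\ar[d]_-{i^\prime} & Y^\prime\ar[d]^-i \\
X^{\prime\prime}\ar[r]^-{f^{\prime\prime}}\ar[d]_-{p^\prime} &  Y^{\prime\prime}\ar[d]^-p\\
X\ar[r]_-f & Y.}
\]
By the lemma preceding the theorem, the top square is itself tor-independent, $i'$ is a regular embedding, and $(f')^*C_{Y'}Y''\simeq C_{X'}X''$. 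By functoriality of pull-backs (Proposition~\ref{prop:productcomm} and the definitions of $i^*$, $p^*$) and of pushforwards (Theorem~\ref{thm:properpf}), it suffices to verify the base change formula for each of the two small squares separately. The canonical isomorphisms $\alpha(-,-)$ and $\mathrm{can}$ are compatible with the factorization by construction of $\tau_g$ as an inverse limit over factorizations, so the bookkeeping of twists reduces to checking compatibility of the two partial $\mathrm{can}$'s with the vertical exact sequences that define $\tau_g$.

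For the lower square (where $p$ and $p'$ are smooth), the result is essentially a cohomological translation of the homological base change theorem. Under the identifications
\[
\tilde C(X,j,D(\Omega_{X/k})\otimes f^*(L,a))\simeq C(X,j+d_X,f^*(L,a))
\]
(and analogously on $X''$, $Y$, $Y''$), the smooth pull-back $p^*$ used to define the cohomological pull-back coincides up to a shift and a canonical-isomorphism twist with the smooth pull-back appearing in the homological statement, while $f_*$ and $f''_*$ coincide on the nose with the homological push-forwards. One then applies the homological base change theorem to the lower square, and the extra $\alpha(-,-)$ isomorphisms appearing in the statement absorb the canonical isomorphisms relating $\Omega_{X''/X}$, $\Omega_{Y''/Y}$ and $\tau_p$.

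For the upper square (regular embeddings $i$, $i'$), recall that by definition $i^* = (\pi^*)^{-1}\circ \mu_t$ where $t$ is the coordinate on the deformation $D(Y',Y'')$ and $\pi\colon N_{Y'}Y''\to Y'$ is the normal bundle projection. The Cartesian square lifts to a commutative diagram of deformation spaces
\[
\xymatrix{D(X',X'')\ar[r]^-{F}\ar[d] & D(Y',Y'')\ar[d] \\ X''\times\A^1\ar[r]_-{f''\times 1} & Y''\times\A^1}
\]
which restricts, at $t\neq 0$, to $f''\times 1$ (where base change holds by the lower-square case after pulling back along the projection to $\A^1$, together with homotopy invariance, Corollary~\ref{cor:homotopyinv}), and, at $t=0$, to the base-changed map $N_{X'}X''\to N_{Y'}Y''$ of normal bundles, which is the pull-back of $f'$ along the smooth projections (using $(f')^*C_{Y'}Y''\simeq C_{X'}X''$). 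The operator $\mu_t$ commutes with the proper push-forward $F_*$ because $\mu_t$ is (by construction in the previous section) a component of the Rost-Schmid differential, and the differential commutes with proper push-forward (Theorem~\ref{thm:properpf}). Pre-composing with the pull-back along the projection to $\A^1$ on both sides and post-composing with $(\pi^*)^{-1}$ and its analogue on $N_{X'}X''$ (a quasi-isomorphism by homotopy invariance), the resulting identity is exactly the cohomological base change formula for the upper square.

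The main obstacle will be the last step: making sure that every canonical isomorphism (the $\alpha(-,-)$'s, the $\mathrm{can}$ identifying normal bundles after base change, and the isomorphisms identifying $D(\Omega_{D(Y',Y'')/-})$ with the relevant limit data) matches up coherently in the two squares. The key point is that these canonical isomorphisms are built out of the elementary isomorphisms (\ref{eqn:canonical1}), (\ref{eqn:canonical2}), (\ref{eqn:canonical3}), which are compatible with composition and base change by \cite{Kunz86}; the diagram chase is lengthy but mechanical. Once all the twists line up, the commutativity follows by composing the lower-square case (homological base change) with the upper-square case (deformation to the normal cone $+$ homotopy invariance).
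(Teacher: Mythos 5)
These notes do not prove this theorem at all: immediately after the statement the text refers to \cite{Calmes17b} for a (characteristic-free) proof, the homological input being taken from \cite{Deglise17}. So your proposal cannot be checked against an internal argument; what you outline is the classical cycle-level strategy (factor $g$ through a smooth morphism and a regular embedding, settle the smooth square by flat base change, and the embedding square by deformation to the normal cone), which is a sensible route, close in spirit to \cite{Rost96} and to what the cited reference carries out.

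As written, however, it is a sketch with several load-bearing steps asserted rather than proved. (1) Your reduction needs $g^*=i^*p^*$ and $(g')^*=(i')^*(p')^*$ with $p^*,(p')^*$ the \emph{flat} pull-backs (so that the homological base change theorem applies to the lower square) and $i^*,(i')^*$ the \emph{directly defined} embedding pull-backs $(\pi^*)^{-1}\mu_t$ (so that the deformation argument applies to the upper square). The general pull-back is defined only via the graph factorization, so you must show it agrees with the flat pull-back for smooth morphisms and with the deformation-defined pull-back for closed embeddings; the functoriality you appeal to (note that Proposition \ref{prop:productcomm} is the ring-homomorphism statement, not functoriality) does not give these identifications by itself, and for $(g')^*$ your factorization through $Y'\times X$ is not the graph factorization. (2) The claim that $\mu_t$ commutes with $F_*$ is not formal from Theorem \ref{thm:properpf}: $\mu_t$ multiplies by $[t]$ before taking the component $d(t)$ of the differential, so you need $\KMW_*$-linearity of the canonical transfers over the base (a cycle-level projection formula $\mathrm{Tr}^{k(x)}_{k(y)}([t]\beta)=[t]\,\mathrm{Tr}^{k(x)}_{k(y)}(\beta)$ for $t$ pulled back from the target), together with the fact that $F_*$ respects the splitting of the differential according to supports on $V(t)$. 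This is true (it is in \cite{Morel08}), but it must be invoked explicitly, and it must not be replaced by the paper's projection formula, which is itself deduced from the theorem you are proving. (3) The existence of a \emph{proper} morphism $F:D(X',X'')\to D(Y',Y'')$ over $f''\times 1$ whose fibre at $t=0$ is the base-changed bundle map $N_{X'}X''\to N_{Y'}Y''$ is exactly where tor-independence and the preceding lemma enter; blow-ups do not commute with base change in general, so this needs an argument, not just a displayed square. (4) The coherence of the isomorphisms $\alpha(-,-)$ and $\mathrm{can}$ across the two squares, which you defer as mechanical, is in this twisted setting a substantial portion of the work. In short: a viable strategy, but the four items above are genuine gaps to fill before it becomes a proof.
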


We refer the reader to \cite{Calmes17b} for a (characteristic free) proof. In fact, the result holds more generally if we assume that the schemes are of finite type over a field, and that the vertical map are l.c.i.

One of the main consequences of the base change theorem is a projection formula for the cohomology of the Rost-Schmid complex. 

\begin{theorem}[Projection formula]
Let $f:X\to Y$ be a proper morphism of essentially smooth schemes over $k$. Let $d=\mathrm{dim}(Y)-\mathrm{dim}(Y)$ and let $(L,a)$ and $(L^\prime,a^\prime)$ be graded line bundles over $Y$. For any $\alpha\in \H^i(Y,j,L,a)$ and $\beta\in \H^{i^\prime}(X,j^\prime,D(\Omega_{X/k})\otimes f^*(L^\prime,a^\prime))$ we have 
\[
f_*(f^*(\alpha)\cdot \beta)=\alpha\cdot f_*(\beta) \in \H^{i+i^\prime+d}(Y,j+j^\prime+d,D(\Omega_{Y/k})\otimes (L,a)\otimes (L^\prime,a^\prime))
\]
and
\[
f_*(\beta\cdot f^*(\alpha))=f_*(\beta)\cdot \alpha \in \H^{i+i^\prime+d}(Y,j+j^\prime+d,D(\Omega_{Y/k})\otimes (L^\prime,a^\prime)\otimes (L,a)).
\]
\end{theorem}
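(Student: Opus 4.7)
The plan is to derive both identities from the base change formula together with the compatibility between proper push-forward and exterior product; the second identity will then follow from the first using graded commutativity of the product. I focus on the first identity $f_*(f^*\alpha \cdot \beta) = \alpha \cdot f_*(\beta)$.

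Recall that on any smooth scheme $Z$ the internal product is defined as $\gamma \cdot \delta = \Delta_Z^*(\gamma \times \delta)$, where $\Delta_Z$ is the diagonal. Consider the square
\[
\xymatrix{X \ar[r]^-{\Gamma_f} \ar[d]_-f & Y \times X \ar[d]^-{\mathrm{id}_Y \times f} \\ Y \ar[r]_-{\Delta_Y} & Y \times Y}
\]
where $\Gamma_f$ is the graph of $f$. A direct inspection shows that this square is Cartesian, and it is tor-independent because $\Delta_Y$ is a regular embedding of smooth schemes cut out locally by a regular sequence which remains regular after base change along $\mathrm{id}_Y \times f$. The vertical arrow $\mathrm{id}_Y \times f$ is proper (since $f$ is), so the Cohomological Base Change Theorem applies.

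The first main step is to prove an exterior projection formula: the push-forward $(\mathrm{id}_Y \times f)_*$ commutes with the exterior product, i.e.
\[
(\mathrm{id}_Y \times f)_*(\alpha \times \beta) = \alpha \times f_*(\beta)
\]
up to the canonical identifications of twisting line bundles. This can be verified directly at the level of the Rost-Schmid complex: both sides are built out of canonical transfers applied to $\mu(x,y;u)$-style products, and the required compatibility reduces to the fact that the canonical transfer $\mathrm{Tr}^{L}_F$ is $\KMW_*(F,D(\Omega_{F/k}))$-linear, a property which follows formally from the universal property of Lemma \ref{lem:uniquegeom} adapted to the twisted setting. The second main step is to apply base change to the square above, which (after the canonical identifications $\alpha(X,Y)$ and the compatibility of $D(\Omega_{-/k})$ with products of schemes are made) yields the equality of composites $\Delta_Y^* \circ (\mathrm{id}_Y \times f)_* = f_* \circ \Gamma_f^*$. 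Chaining these gives
\[
\alpha \cdot f_*(\beta) = \Delta_Y^*(\alpha \times f_*(\beta)) = \Delta_Y^* (\mathrm{id}_Y \times f)_*(\alpha \times \beta) = f_* \Gamma_f^*(\alpha \times \beta),
\]
and finally $\Gamma_f^*(\alpha \times \beta) = f^*(\alpha) \cdot \beta$ follows from Proposition \ref{prop:productcomm} together with the identities $p_1 \Gamma_f = f$ and $p_2 \Gamma_f = \mathrm{id}_X$ for the two projections from $Y \times X$. The second identity follows by applying graded commutativity on $X$ before $f_*$ and on $Y$ after $f_*$: the resulting prefactors $\langle (-1)^{\cdots}\rangle \epsilon^{\cdots}$ depend only on the bidegrees and the ranks of the twisting line bundles, which are the same on both sides since $f^*$ preserves bidegrees and $f_*$ shifts them by the same amount $(d, d)$, so the two prefactors cancel.

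The main obstacle will be the bookkeeping of the graded line bundle twists throughout: tracking through the isomorphisms $\alpha(-,-)$ that appear in the base change formula, the exterior tensor decomposition $D(\Omega_{Y \times X/k}) \simeq D(\Omega_{Y/k}) \boxtimes D(\Omega_{X/k})$, and the identification $\tau_{\Delta_Y} \simeq D(\Omega_{Y/k})^{-1}$ needed to reconcile the statement of base change with the targets appearing in the projection formula. Once this bookkeeping is set up carefully (ideally only once, and used throughout), the rest of the argument is essentially formal.
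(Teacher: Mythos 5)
Your argument is essentially the paper's own proof: the same Cartesian square relating $\Gamma_f$, $\Delta_Y$, $f$ and $\mathrm{id}\times f$, the cohomological base change theorem giving $\Delta_Y^*(\mathrm{id}\times f)_*=f_*\Gamma_f^*$, and the compatibility of push-forward with exterior products (which the paper likewise leaves as a reader-check, in the form $(f\times\mathrm{id})_*=f_*\times\mathrm{id}$). The only cosmetic difference is that you deduce the second identity from the first via graded commutativity, whereas the paper proves one directly and obtains the other by running the symmetric argument with the factors of the product switched; both are correct.
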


\begin{proof}
We can consider the Cartesian square
\[
\xymatrix{X\ar[r]^-{\Gamma_f}\ar[d]_-f & X\times Y\ar[d]^-{f\times \mathrm{id}} \\
Y\ar[r]_-{\Delta_Y} & Y\times Y}
\]
where $\Delta_Y$ is the diagonal morphism and $\Gamma_f$ is the graph of $f$. If $\Delta_X$ is the diagonal morphism of $X$, we have by definition $\Gamma_f=(\mathrm{id}\times f)\circ \Delta_X$. By base change, we have (using the relevant canonical isomorphisms)
\[
\Delta_Y^*(f\times\mathrm{id})_*=f_*\Gamma_f^*=f_*\Delta_X^*(\mathrm{id}\times f)^*.
\]
We let the reader check that $(f\times\mathrm{id})_*=f_*\times \mathrm{id}$, which gives immediately the second formula. The same argument switching $X$ and $Y$ on the right yields the other statement (see also \cite[Remark 3.6]{Calmes14b} for an alternative argument).
\end{proof}

\section{Some computations}

The theory of Chow-Witt groups and more generally of the (co-)homology of the Rost-Schmid complex being relatively new, there are few computations available in the literature, especially compared to the amount of computations involving Chow groups. One primary difficulty is already to compute the Chow-Witt group of the base field, i.e. its Grothendieck-Witt group $\GW(k)$. For a general field, this is a very interesting but quite difficult question, that we completely avoid here. So, all the computations below will be in term of $\GW(k)$, not bothering about its actual shape.

Perhaps the most basic computation of the Chow ring is the one of the projective space $\PP_k^n$. It is a truncated polynomial algebra generated by the first Chern class of the tautological line bundle. If one can define the Euler class of the tautological bundle, it doesn't live in the regular Chow-Witt group, but rather on its version twisted by the graded line bundle $D(\OO(-1))$. This apparently innocuous difference leads to a completely different result as the ordinary one (\cite[Corollary 11.8]{Fasel09d}). 

\begin{theorem}
We have 
\begin{eqnarray*}
\CHW^i(\PP^n_k) & = & \begin{cases}
\GW(k) & \text{if $i=0$ or $i=n$ with $n$ odd.} \\
\Z & \text{if $i$ is even and $i\neq 0$.}  \\
2\Z & \text{if $i$ is odd and $i\neq n$.}
\end{cases} \\
\CHW^i(\PP^n_k,D({\OO(-1)})) & = & \begin{cases}
2\Z & \text{if $i$ is even and $i\neq n$}. \\
\Z & \text{if $i$ is odd}. \\
\GW(k) & \text{if $i=n$ and $n$ is even}.
\end{cases} 
\end{eqnarray*}
\end{theorem}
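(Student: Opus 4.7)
The plan is to induct on $n$, applying the localization long exact sequence of Section \ref{sec:basicprop} to the hyperplane inclusion $\iota\colon \PP^{n-1}_k \hookrightarrow \PP^n_k$ with open complement $u\colon \A^n_k \hookrightarrow \PP^n_k$. The base case $n = 0$ reduces to $\CHW^0(\Spec k) = \GW(k)$, which is immediate from the definitions.

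For the inductive step, I prepare two inputs. First, by homotopy invariance (Corollary \ref{cor:homotopyinv}) and the triviality of all line bundles on $\A^n$, we have $\CHW^0(\A^n, u^*L) \simeq \GW(k)$ and $\CHW^i(\A^n, u^*L) = 0$ for $i \geq 1$, for each choice $L \in \{\OO, D(\OO(-1))\}$. Second, the twist appearing on the $\PP^{n-1}$-factor of the localization sequence is $D(\Omega_{\PP^n/\PP^{n-1}}) \otimes \iota^* L$; combining the canonical isomorphism (\ref{eqn:canonical3}) with $C_{\PP^{n-1}}\PP^n \simeq \OO(-1)|_{\PP^{n-1}}$ and the fact that $\CHW^*(X, L')$ depends on $L'$ only modulo squares of line bundles, this twist is equivalent to $D(\OO(-1))|_{\PP^{n-1}}$ when $L = \OO$ and trivial when $L = D(\OO(-1))$. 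Consequently the two localization sequences interchange the two twist cases of the inductive hypothesis.

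Feeding these inputs into the long exact sequence, I read off isomorphisms $\CHW^i(\PP^n, L) \simeq \CHW^{i-1}(\PP^{n-1}, \text{twist})$ for all $i \geq 2$, which by induction reproduce the claimed values (the parity shift between $i$ and $i-1$ accounting for the alternation between $\Z$ and $2\Z$, and the twist swap propagating the $\GW(k)$ groups concentrated on the diagonals). In low degrees the sequence reduces to
\[
0 \to \CHW^0(\PP^n, L) \to \GW(k) \xrightarrow{\partial} \CHW^0(\PP^{n-1}, \text{twist}) \to \CHW^1(\PP^n, L) \to 0.
\]
In the untwisted case, the composition $\Spec k \hookrightarrow \A^n \hookrightarrow \PP^n \to \Spec k$ is the identity, so the injection $\CHW^0(\PP^n) \hookrightarrow \GW(k)$ is split surjective, forcing $\CHW^0(\PP^n) \simeq \GW(k)$.

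The main obstacle is the identification of the connecting homomorphism $\partial$ in each case. The natural approach is to use $\GW(k)$-linearity to describe $\partial$ as multiplication by a fixed class in $\CHW^0(\PP^{n-1}, \text{twist})$ obtained as a twisted residue at the hyperplane (cf. the formalism of Section 3.2 and the hyperplane class $\widetilde{\mathrm{ord}}(H)$ defined there). A parity check then handles the ``diagonal'' subcases ($L = \OO$ with $n$ odd and $L = D(\OO(-1))$ with $n$ even), where $\partial$ vanishes and produces an extra copy of $\GW(k)$ in top degree. In the remaining subcases, the computation of $\partial$ reduces to understanding multiplication by the hyperbolic form $h = \langle 1,-1\rangle$ on $\GW(k)$: its image is an infinite cyclic subgroup (abstractly $\Z$, but appearing as ``$2\Z$'' under the degree map to $\Z$) and its cokernel is $\Z$. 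Feeding these values into the four-term sequence matches the tabulated formulas and closes the induction; the outcome coincides with \cite[Corollary 11.8]{Fasel09d}.
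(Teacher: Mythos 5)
Your overall strategy (induction on $n$ via the localization sequence for the hyperplane $\PP^{n-1}\subset\PP^n$, with the twist bookkeeping $N_{\PP^{n-1}}\PP^n\simeq\OO(1)$ interchanging the two twist cases) is the natural one, and several pieces are correct: the identification of the twists modulo squares, the isomorphisms $\CHW^i(\PP^n,L)\simeq\CHW^{i-1}(\PP^{n-1},\mathrm{tw})$ for $i\geq 2$, and the splitting argument giving $\CHW^0(\PP^n)=\GW(k)$. (The paper itself only cites \cite[Corollary 11.8]{Fasel09d}, where the full bigraded cohomology $\H^i(\PP^n,j)$ is computed first.)

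However, there is a genuine gap in your low-degree analysis. The localization sequence of Section \ref{sec:basicprop} is a long exact sequence in a \emph{fixed weight} $j$, relating the groups $\H^i(-,j)$ for varying $i$; the Chow--Witt groups are the diagonal entries $\CHW^i=\H^i(-,i)$, so $\CHW^0(\PP^n,L)=\H^0(\PP^n,0,L)$ and $\CHW^1(\PP^n,L)=\H^1(\PP^n,1,L)$ do \emph{not} sit in the same localization sequence, and your four-term sequence
\[
0 \to \CHW^0(\PP^n, L) \to \GW(k) \xrightarrow{\partial} \CHW^0(\PP^{n-1}, \mathrm{tw}) \to \CHW^1(\PP^n, L) \to 0
\]
is not what localization produces. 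In weight $0$ the group receiving $\partial$ from $\CHW^0(\A^n)=\H^0(\A^n,0)$ is $\H^0(\PP^{n-1},-1,\mathrm{tw})$ --- an unramified Witt-group term, not a Chow--Witt group --- while in weight $1$ the group surjecting onto $\CHW^1(\PP^n,L)$ is $\CHW^0(\PP^{n-1},\mathrm{tw})$ but the connecting map into it comes from $\H^0(\A^n,1)=\KMW_1(k)$, not from $\GW(k)$. That your sequence cannot be repaired by a cleverer $\partial$ is visible already over $k=\mathbb{C}$: there it would read $0\to 2\Z\to\Z\to\Z\to\Z\to 0$, which is not exact for any middle map. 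Relatedly, ``multiplication by $h$'' on $\GW(k)$ has kernel $\I(k)$ and cokernel $\W(k)$, not kernel $\Z h$ and cokernel $\Z$; the two requirements you need are met by two \emph{different} connecting maps, both of which are $\eta$-multiplications produced by the $\langle\pi\rangle$-factor coming from trivializing $\OO(-1)$ off the hyperplane: $\GW(k)\twoheadrightarrow\W(k)$ (kernel $\Z h$, giving $\CHW^0(\PP^n,D(\OO(-1)))=2\Z$) and $\KMW_1(k)\to\GW(k)$ with image $\I(k)$ (cokernel $\Z$, giving $\CHW^1(\PP^n,D(\OO(-1)))=\Z$). A corrected induction therefore has to carry along the off-diagonal groups $\H^0(\PP^{m},j,\mathrm{tw})$ for $j\leq 0$ (and identify these connecting maps), which is precisely why the cited reference computes the whole bigraded cohomology of $\PP^n$ before extracting the diagonal.
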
 

In the statement, both the factors $\Z$ and $2\Z$ indicate that the relevant group is $\Z$, but that its image under the projection $\CHW^i(\PP^n_k)\to \CH^i(\PP^n_k)=\Z$ is either everything or of index $2$. More generally, one can compute the whole cohomology of the Rost-Schmid complex of $\PP^n_k$ (\cite[Theorem 11.7]{Fasel09d}). As an interesting byproduct, we can compute the Chow-Witt group (or actually also the ring structure) of the classifying space $\mathrm{B}\mathbb{G}_m$. To define the latter, consider for $n\in\N$ the embedding $k^{n+1}\to k^{n+2}$ as the first $n+1$ factors. It induces a closed immersion $b_n:\PP^{n}\to \PP^{n+1}$ and we obtain a sequence
\[
\Spec k\stackrel{b_0}\to \PP^1\to \ldots \to \PP^n\stackrel{b_n}\to \PP^{n+1}\to \ldots
\]
For any $i\in\N$, this yields a sequence
\[
\xymatrix@C=1.3em{\CHW^i(\Spec k) & \CHW^i(\PP^1)\ar[l]_-{b_1^*} & \ldots\ar[l] & \CHW^i(\PP^n)\ar[l] & \CHW^i(\PP^{n+1})\ar[l]_-{b_n^*} & \ldots\ar[l]}
\] 
This sequence eventually stabilizes (\cite[Theorem 3.3]{Asok13}), giving a "concrete" model for the limit, which is by definition the group $\CHW^i(\mathrm{B}\mathbb{G}_m)$. In short, we have $\CHW^i(\mathrm{B}\mathbb{G}_m)=\CHW^i(\PP^n)$ for $n$ large enough. We can define the group $\CHW^i(\mathrm{B}\mathbb{G}_m,D(\OO(-1)))$ in a similar fashion (\cite[Theorem 3.3]{Asok13}). As a straightforward consequence of the previous theorem, we obtain the following result.

\begin{theorem}
We have 
\[
\CHW^i((\mathrm{B}\mathbb{G}_m)=\begin{cases} \GW(k) & \text{ if $i=0$.} \\ \Z & \text{ if $i\neq 0$ is even.} \\ 2\Z & \text{ if $i$ is odd.} \end{cases}
\]
while 
\[
\CHW^i((\mathrm{B}\mathbb{G}_m,D(\OO(-1)))=\begin{cases} \Z & \text{ if $i$ is odd.} \\ 2\Z & \text{ if $i$ is even.} \end{cases}
\]
\end{theorem}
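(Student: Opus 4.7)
The plan is direct: combine the preceding computation of $\CHW^i(\PP^n_k)$ (and its twisted version) with the stabilization $\CHW^i(\mathrm{B}\mathbb{G}_m)\cong \CHW^i(\PP^n_k)$ for $n\gg 0$, recalled from \cite[Theorem 3.3]{Asok13} just before the statement. The key observation is that the exceptional $\GW(k)$-valued entry in the previous theorem occurs only when $i=n$, so any choice of $n>i$ inside the stable range automatically lands in the ``generic'' row of the case distinction and simply reads off the answer.

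Concretely, for the untwisted group I would argue as follows. For $i=0$, every $\CHW^0(\PP^n_k)$ equals $\GW(k)$, so the limit is $\GW(k)$. For $i\geq 1$, fix any $n>i$; the previous theorem then gives $\CHW^i(\PP^n_k)=\Z$ when $i$ is even and $\CHW^i(\PP^n_k)=2\Z$ when $i$ is odd, since the hypothesis $i=n$ is excluded. For the twisted group the same strategy applies: for $n>i$ the case ``$i=n$ with $n$ even'' is again excluded, and the previous theorem returns $2\Z$ for $i$ even and $\Z$ for $i$ odd. In both cases the identification $\CHW^i(\mathrm{B}\mathbb{G}_m)=\CHW^i(\PP^n_k)$ for $n$ in the stable range concludes the proof.

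The only point requiring justification is that $n>i$ is genuinely in the stable range, i.e.\ that the restriction maps $b_n^*:\CHW^i(\PP^{n+1}_k)\to \CHW^i(\PP^n_k)$ are isomorphisms there. This is the content of \cite[Theorem 3.3]{Asok13}, and it can also be checked directly by induction on $n$ using the localization sequence of Section~\ref{sec:basicprop} associated to the closed embedding $\PP^n_k\hookrightarrow\PP^{n+1}_k$ with open complement $\mathbb{A}^{n+1}_k$, combined with homotopy invariance (Corollary~\ref{cor:homotopyinv}) to kill the cohomology of the affine complement. Granting this, there is no further obstacle: the argument is pure bookkeeping on top of the previous theorem, which is precisely why the author calls it a ``straightforward consequence''.
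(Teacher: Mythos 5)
Your argument is exactly the paper's: the author deduces the theorem from the preceding computation of $\CHW^i(\PP^n_k)$ (twisted and untwisted) together with the stabilization $\CHW^i(\mathrm{B}\mathbb{G}_m)=\CHW^i(\PP^n_k)$ for $n\gg i$ from \cite[Theorem 3.3]{Asok13}, noting that the exceptional case $i=n$ never occurs in the stable range, and this is all the paper means by ``straightforward consequence''. One caveat on your optional aside: the localization sequence for the closed embedding $\PP^n_k\subset \PP^{n+1}_k$ compares $\CHW^i(\PP^{n+1}_k)$ with the cohomology of the \emph{open} complement $\A^{n+1}_k$ and the push-forward from the closed stratum, not with the pull-back $b_n^*$ to $\PP^n_k$, so it does not directly prove stabilization; better to simply quote \cite[Theorem 3.3]{Asok13} as the paper does.
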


In terms of characteristic classes, the total ring
\[
\CHW^{\mathrm{tot}}(\mathrm{B}\mathbb{G}_m):=\CHW^*(\mathrm{B}\mathbb{G}_m)\oplus \CHW^*((\mathrm{B}\mathbb{G}_m,D(\OO(-1)))
\]
is generated in degree $1$ by the class $e:=e(\OO(1))\in \CHW^1((\mathrm{B}\mathbb{G}_m,D(\OO(-1)))$ and by another class that we now describe. Recall from Section \ref{sec:preliminaries} that we have a homomorphism
\[
h_n:\KM_n(F)\to \KMW_n(F)
\]
defined by $\{a_1,\ldots,a_n\}\mapsto [a_1^2,a_2,\ldots,a_n]$. This induces a homomorphism
\[
H_i:\CH^i(X)\to \CHW^i(X)
\]
for any smooth scheme $X$. In particular, we can consider for any $i\in\N$ the class 
\[
c:=H_1(c_1(\OO(1)))\in \CHW^1(\mathrm{B}\mathbb{G}_m).
\]
It turns out that 
\[
\CHW^{\mathrm{tot}}(\mathrm{B}\mathbb{G}_m)=\GW(k)[e,c]/J
\]
where $J$ is the ideal generated by the relations:
\begin{enumerate}
\item $\I(k)e=0=\I(k)c$.
\item $ec=ce$.
\item $c^2=2he^2$.
\end{enumerate}

The interest of computing the total Chow-Witt ring of $\mathrm{B}\mathbb{G}_m$ lies in the fact that the latter gives a complete set of characteristic classes attached to line bundles (\cite[Theorem 1.3]{Totaro99} and \cite[Proposition 3.3]{Hornbostel18}). Thus, the outcome of our computation is that there are essentially two characteristic classes attached to a line bundle: the Euler class (which lives in a twisted group) and a class which is the image of the first Chern class.

Actually, the situation we just described can be generalized to more general (reasonable) group schemes $G$ over $k$. Hornbostel-Wendt computed recently the total Chow-Witt ring of both $\mathrm{BSL}_n$ and $\mathrm{BSp}_{2n}$  (no twist by a line bundle is needed here). We refer the reader to their paper (\cite{Hornbostel18}) for a precise formulation, the essential fact being that there is essentially one new kind of characteristic class: the Borel classes of Panin-Walter (\cite{Panin10pred}); the other classes that appear are roughly defined through Chow groups and Chow groups modulo $2$. Their computation was later extended to $\mathrm{BGL}_n$ by Wendt in \cite{Wendt18}. The reader can compare with our computation above in case $n=1$. The obvious question is now to understand the classifying spaces of orthogonal groups. It is not clear to me what can be expected.

\bibliographystyle{amsplain}
\bibliography{General}

\end{document}